\newcommand{\mB}{\mathcal{B}}
\newcommand{\mE}{\mathcal{E}}
\newcommand{\mF}{\mathcal{F}}
\newcommand{\mL}{\mathcal{L}}
\newcommand{\mN}{\mathcal{N}}
\newcommand{\mP}{\mathcal{P}}
\newcommand{\mR}{\mathcal{R}}
\newcommand{\mS}{\mathcal{S}}
\newcommand{\mT}{\mathcal{T}}
\newcommand{\fm}{\mathfrak{m}}
\newcommand{\fM}{\mathfrak{M}}
\newcommand{\fp}{\mathfrak{p}}
\newcommand{\fq}{\mathfrak{q}}
\newcommand{\fT}{\mathfrak{T}}
\newcommand{\bfA}{\mathbf{A}}
\newcommand{\bfC}{\mathbf{C}}
\newcommand{\bfF}{\mathbf{F}}
\newcommand{\bfN}{\mathbf{N}}
\newcommand{\bfP}{\mathbf{P}}
\newcommand{\bfQ}{\mathbf{Q}}
\newcommand{\bfT}{\mathbf{T}}
\newcommand{\bfZ}{\mathbf{Z}}
\newcommand{\Oo}{\mathcal{O}}
\newcommand{\AF}{\mathbf{A}_F}
\newcommand{\AQ}{\mathbf{A}}
\newcommand{\OFv}{\mathcal{O}_{F,v}}
\newcommand{\tuint}{\textup{int}}
\newcommand{\ov}{\overline}
\newcommand{\be}{\begin{equation}}
\newcommand{\ee}{\end{equation}}
\newcommand{\bes}{\begin{equation*}}
\newcommand{\ees}{\end{equation*}}
\newcommand{\bs}{\begin{split}}
\newcommand{\es}{\end{split}}
\newcommand{\bss}{\begin{split*}}
\newcommand{\ess}{\end{split*}}
\newcommand{\bmat}{\left[ \begin{matrix}}
\newcommand{\emat}{\end{matrix} \right]}
\newcommand{\bsmat}{\left[ \begin{smallmatrix}}
\newcommand{\esmat}{\end{smallmatrix} \right]}
\newcommand{\bml}{\begin{multline}}
\newcommand{\eml}{\end{multline}}
\newcommand{\bmls}{\begin{multline*}}
\newcommand{\emls}{\end{multline*}}
\DeclareMathOperator{\Cl}{Cl}
\DeclareMathOperator{\End}{End}
\DeclareMathOperator{\Ext}{Ext}
\DeclareMathOperator{\Frob}{Frob}
\DeclareMathOperator{\Gal}{Gal}
\DeclareMathOperator{\GL}{GL}
\DeclareMathOperator{\Hom}{Hom}
\DeclareMathOperator{\image}{Im}
\DeclareMathOperator{\Res}{Res}
\DeclareMathOperator{\Spec}{Spec}
\DeclareMathOperator{\val}{val}
\newcommand{\iy}{\infty}
\newcommand{\tr}{\textup{tr}\hspace{2pt}}
\theoremstyle{plain}
\newtheorem{thm}{Theorem}
\newtheorem{prop}[thm]{Proposition}
\newtheorem{cor}[thm]{Corollary}
\newtheorem{lemma}[thm]{Lemma}
\newtheorem{conj}[thm]{Conjecture}
\theoremstyle{definition}
\newtheorem{definition}[thm]{Definition}
\newtheorem{notation}[thm]{Notation}
\newtheorem{example}[thm]{Example}
\newtheorem{rem}[thm]{Remark} 
\newtheorem{assumption}[thm]{Assumption}
\numberwithin{thm}{section}
\numberwithin{equation}{section}
\DeclareMathOperator{\Fit}{Fitt}
\numberwithin{equation}{section}
\begin{document}
\author{Tobias Berger$^1$ \and
Krzysztof Klosin$^2$}
\address{$^1$School of Mathematics and Statistics, University of Sheffield, Hicks Building, Hounsfield Road, Sheffield S3 7RH, UK, email: tberger@cantab.net}
\address{$^2$Department of Mathematics,
Queens College,
City University of New York,
65-30 Kissena Blvd,
Queens, NY 11367, USA, email: kklosin@qc.cuny.edu}
\title[On modularity of reducible residual Galois representations]{On lifting and modularity of reducible residual Galois representations over imaginary quadratic fields}
\subjclass[2010]{11F80, 11F55}

\keywords{Galois representations, Galois deformations, automorphic forms, modularity}

\thanks{The work of the second author was  partially supported by a PSC-CUNY Award, jointly funded by The Professional
Staff Congress and The City University of New York.}
\maketitle 













\begin{abstract}
In this paper we study deformations of mod $p$ Galois representations $\tau$ (over an imaginary quadratic field $F$)
of dimension $2$ whose semi-simplification is the direct sum of two characters $\tau_1$ and $\tau_2$. As opposed to \cite{BergerKlosin13} we do not impose any restrictions on the dimension of the crystalline Selmer group $H^1_{\Sigma}(F, {\rm Hom}(\tau_2, \tau_1)) \subset {\rm Ext}^1(\tau_2, \tau_1)$. 
We establish that there
exists a basis $\mB$ of $H^1_{\Sigma}(F, {\rm Hom}(\tau_2, \tau_1))$ arising from automorphic representations over $F$ (Theorem \ref{mainthm}).
Assuming among other things that the elements of $\mB$ admit only finitely many crystalline characteristic 0 deformations we prove a modularity lifting theorem asserting that if $\tau$ itself is modular then so is its every crystalline characteristic zero deformation  (Theorems \ref{mainthm2} and \ref{version2}). 
\end{abstract}

\maketitle

\section{Introduction} 
Let $p$ be an odd prime. Let $F$ be a number field, $\Sigma$ a finite set of primes of $F$ (containing all primes $\fp$ of $F$ lying over $p$) and $G_{\Sigma}$ the Galois group of the maximal extension
of $F$ unramified outside $\Sigma$. Let $E$ be a finite extension of $\bfQ_p$ with ring of integers $\Oo$ and residue field $\bfF$. Let  $\tau_1, \tau_2: G_{\Sigma} \rightarrow \GL_{n_i}(\bfF)$ be two absolutely irreducible non-isomorphic representations
 with $n_1+n_2=n$, which we assume lift uniquely to  crystalline representations $\tilde \tau_i: G_{\Sigma} \rightarrow \GL_{n_i}(\Oo)$.

The aim of this article is to study deformations of \emph{non-semi-simple} continuous crystalline representations
$\tau:
G_{\Sigma} \rightarrow \GL_n(\bfF)$ whose semi-simplification is $\tau_1 \oplus \tau_2$ in the case $n=2$ and $F$ is an imaginary quadratic field. We analyzed this deformation problem in \cite{BergerKlosin13} under the additional assumption that $H^1_{\Sigma}(F, \Hom(\tau_2, \tau_1))$ is one-dimensional (which is equivalent to saying that there exists only one such $\tau$ up to isomorphism). Here $H^1_{\Sigma}$
denotes the subgroup of $H^1$ consisting of classes unramified outside $\Sigma$ and crystalline at all $\fp \mid p$. In this paper we do not make any assumption on this dimension. Disposing of the ``dim=1'' assumption is more than a technicality as in the general case one can no longer expect to be able to identify the universal deformation ring with a Hecke algebra.

This question was studied by Skinner and Wiles for $n=2$ and totally real fields $F$ in  the seminal paper \cite{SkinnerWiles99}. In that paper the authors analyze primes $\fq$ of the (ordinary) universal deformation ring $R_{\tau}$ of $\tau$ and prove that they are `pro-modular' in the sense that the trace of the deformation corresponding to $R_{\tau}\twoheadrightarrow R_{\tau}/\fq$ occurs in the Hecke algebra $\bfT$. In particular no direct identification of $R_{\tau}$ and $\bfT$ is made.

In this article we take a different approach and work with the reduced universal deformation ring $R_{\tau}^{\rm red}$ and its ideal of reducibility. In the ``dim=1''-case, the authors proved (as a consequence of an $R=T$-theorem - Theorem 9.14 in \cite{BergerKlosin13}) that $R_{\tau}^{\rm red}$ is a finitely generated $\bfZ_p$-module.  In contrast, if $\dim H^1_{\Sigma}(F, \Hom(\tau_2, \tau_1))>1$, while  there are only finitely many automorphic representations whose associated Galois representations are deformations of $\tau$, the ring $R^{\rm red}_{\tau}$ may potentially be infinite over $\bfZ_p$ (Remark \ref{baddef}). This is a direct consequence of  the existence of linearly independent cohomology classes inside the Selmer group which can be used to construct non-trivial lifts to $\GL_2(\bfF[[X]])$. 
The resulting (potentially large)  characteristic $p$ components of $R^{\rm red}$ do not arise from automorphic representations and in this paper we will ignore them by considering 
a certain torsion-free quotient $R_{\tau}^0$ of $R^{\rm red}_{\tau}$ 
 instead of $R_{\tau}^{\rm red}$ itself. It is however possible that by doing so we are excluding some characteristic $p$ deformations whose traces may be modular in the sense of \cite{CalegariMazur09} (i.e. arise from torsion Betti cohomology classes).

On the other hand, as opposed to the situation studied in \cite{SkinnerWiles99}, over an imaginary quadratic field  there are no reducible deformations to characteristic zero which in turn  is a consequence of the finiteness of the Bloch-Kato Selmer group $ H^1_{\Sigma}(F, \Hom(\tilde{\tau}_2, \tilde{\tau}_1)\otimes\bfQ_p/\bfZ_p)$ (Lemma \ref{irr4}), where $\tilde{\tau}_1$, $\tilde{\tau}_2$ are (unique) lifts  to characteristic zero of $\tau_1$ and $\tau_2$ respectively.

While each $\tau$ may possess non-modular reducible characteristic $p$ deformations, 
the situation is complicated further by the fact that in general many $\tau$'s do not admit any modular deformations at all (this phenomenon does not arise in the ``dim=1'' case).
Indeed, first note   that two extensions in $\Ext_{G_{\Sigma}}^1(\tau_2, \tau_1)$ define isomorphic representation of $G_{\Sigma}$ if and only if they are (non-zero) scalar multiples of each other. In particular, if $\dim_{\bfF} \Ext_{G_{\Sigma}}^1(\tau_2, \tau_1)=1$, then there is a unique non-semi-simple representation of $G_{\Sigma}$ with semi-simplification $\tau_1 \oplus \tau_2$. (Similarly, if  $\dim_{\bfF} H^1_{\Sigma}(F, \Hom(\tau_2, \tau_1))=1$ then there exists a unique crystalline such representation.) However, if  $\dim_{\bfF}\Ext_{G_{\Sigma}}^1(\tau_2, \tau_1)=m$, then there are $\frac{q^m-1}{q-1}$ non-isomorphic such representations where $q=\# \bfF$. This demonstrates  that in general not all reducible representations $\tau$ can be modular (of a particular level and weight), as the number of such characteristic zero automorphic forms is fixed (in particular it is independent of making a residue field extension).
 Nevertheless, we are able to prove (see Corollary \ref{surj1prime}) that there
exists an $\bfF$-basis $\mB$ of $H^1_{\Sigma}(F, {\rm Hom}(\tau_2, \tau_1))$ arising from modular forms. For this we combine a congruence ideal bound for a Hecke algebra with the upper bound on the Selmer group
of ${\rm Hom}(\tilde{\tau}_2, \tilde{\tau}_1)$ predicted by the Bloch-Kato conjectures.

Let $R^{\rm tr, 0}_{\tau}$ be the image in $R_{\tau}^0$ of the subalgebra generated by traces of $R_{\tau}^{\rm red}$
for $\tau$ arising from a modular form. As pointed out we can extend the set consisting of $\tau$ to a modular basis $\mB:=\{\tau^1=\tau, \tau^2, \dots, \tau^s\}$ of $H^1_{\Sigma}(F, {\rm Hom}(\tau_2, \tau_1))$. Our ultimate goal is to show that it is possible to identify $R^{\rm tr, 0}_{\tau}$ with the quotient $\bfT_{\tau}$ of a Hecke algebra $\bfT$.  
Here the quotient $\bfT_{\tau}$ corresponds to automorphic forms for which there exists a lattice in the associated Galois representation with respect to which the mod $p$ reduction equals $\tau$.

To prove our main modularity lifting theorem (Theorem \ref{mainthm2}) we work under the following two assumptions. On the one hand we assume that the modular basis $\mB$ is unique in the sense that any other such consists of scalar multiples of the elements of $\mB$. On the other hand we assume that   all $\tau \in \mB$ admit only finitely many characteristic zero deformations, which in particular implies that the quotient $R^0_{\tau}$ we define is a finitely generated $\bfZ_p$-module.
The first assumption can be replaced with the assumption that the Bloch-Kato Selmer group $ H^1_{\Sigma}(F, \Hom(\tilde{\tau}_2, \tilde{\tau}_1)\otimes\bfQ_p/\bfZ_p)$ is annihilated by $p$ (Theorem \ref{version2}). This second result is in a sense `orthogonal' to the main results of \cite{BergerKlosin11} and \cite{BergerKlosin13}, where the same Selmer group is assumed to be cyclic, but of arbitrary finite order. 

Our approach relies on simultaneously considering all the deformation problems for representations $\tau^i$ ($i=1,2,\dots, s$). 
 As in \cite{BergerKlosin13} we first study ``reducible'' deformations via the quotients $R^{\rm tr, 0}_{\tau^i}/I^{\rm tr, 0}_{\tau^i}$ for the reducibility ideal $I^{\rm tr, 0}_{\tau^i}$ of the trace of the universal deformation into $\GL_2(R_{\tau}^0)$ as defined by Bella\"iche and Chenevier. These ideals are the analogues of Eisenstein ideals $J_{\tau_i}$ on the Hecke algebra side. To relate $\# \prod_i R^{\rm tr, 0}_{\tau^i}/I^{\rm tr, 0}_{\tau^i}$ to the order of a Bloch-Kato Selmer group we make use of a lattice construction of Urban (Theorem 1.1 of \cite{Urban01}, see Theorem \ref{Urban 1.1} in this paper). 
In fact it is a repeated application of Urban's theorem (on the Hecke side and on the deformation side) that allows us to prove a modularity lifting theorem.
We show that 
when the upper bound on the Selmer group and the lower bound on the congruence ideal agree (which in many cases is a consequence of the Bloch-Kato conjecture), this implies that every reducible deformation which lifts to characteristic zero of every $\tau^i$ is modular (cf. section \ref{urban}). 
It is here that we make use of the assumption on the `uniqueness' of $\mB$ to be able to use a result of Kenneth Kramer and the authors \cite{BergerKlosinKramer14} on the distribution of Eisenstein-type congruences among various residual isomorphism classes of Galois representations (cf. Section \ref{Tsection}). 
Yet another application of Urban's Theorem allows us to prove the existence of a deformation to $\GL_2(R_{\tau}^{\rm tr, 0})$ and as a consequence to identify $R_{\tau}^{\rm tr, 0}$ with $R_{\tau}^0$ (Theorem \ref{deformtoRtr}). Using the fact that the ideal of reducibility 
of $R_{\tau}^0$ is principal (Proposition \ref{prin}) and applying the commutative algebra criterion (Theorem 4.1 in  \cite{BergerKlosin13}) we are finally able to obtain an isomorphism $R_{\tau}^{\rm red} \cong \bfT_{\tau}$ and thus a modularity lifting theorem (Theorems \ref{mainthm2} and \ref{version2}).

Throughout the paper we work in a slightly greater generality than necessary for the imaginary quadratic case to stress that our results apply in a more general context if one assumes some standard conjectures. However,  in section \ref{Main result} we gather all the assumptions in the imaginary quadratic case as well as the statements of the main theorems (Theorems \ref{mainthm}, \ref{mainthm2} and \ref{version2}) in this context. Hence the reader may refer directly to that section for the precise (self-contained) statements of the main results of the paper in that case.

We would like to thank Gebhard B\"ockle and Jack Thorne for helpful comments and conversations related to the contents of this article.  We would also like to express our gratitude to the anonymous referee for suggesting numerous improvements throughout the article.  The second author was partially supported by a PSC-CUNY Award, jointly funded by The Professional Staff Congress and The City University of New York.

\section{Deformation rings} 
Let $F$ be a number field and $p>2$ a prime with $p \nmid \# \Cl_F$  and $p$ unramified in $F/\bfQ$. Let $\Sigma$ be a finite set of finite places of $F$ containing all the places
lying over $p$. Let $G_{\Sigma}$ denote the Galois
group $\Gal(F_{\Sigma}/F)$, where $F_{\Sigma}$ is the maximal extension of $F$ unramified outside $\Sigma$.  For every prime $\fq$ of $F$ we fix compatible embeddings $\ov{F} \hookrightarrow \ov{F}_{\fq} \hookrightarrow \bfC$ and write $D_{\fq}$ and $I_{\fq}$ for the corresponding decomposition and inertia subgroups of $G_F$ (and also their images in $G_{\Sigma}$ by a slight abuse of notation). Let $E$ be a (sufficiently large) finite extension of $\bfQ_p$ with ring of integers $\Oo$ and residue field $\bfF$. We fix a choice
of a  uniformizer $\varpi$. 

\subsection{Deformations} \label{s2.1}
 Denote
the category of local complete Noetherian $\Oo$-algebras with residue field $\bfF$ by $\textup{LCN}(E)$. 
Let $m$ be any positive integer. Suppose $$r: G_{\Sigma} \to {\rm GL}_m(\bfF)$$ is a continuous homomorphism.

We recall from \cite{ClozelHarrisTaylor08} p. 35 the definition of a \emph{crystalline} representation: Let $\fp \mid p$
and $A$ be a complete Noetherian $\bfZ_p$-algebra. A representation $\rho: D_{\fp} \to {\rm GL}_n(A)$ is crystalline if for each Artinian quotient
$A'$ of $A$, $\rho \otimes A'$ lies in the essential image of the Fontaine-Lafaille functor $\mathbf{G}$ (for its definition see e.g. \cite{BergerKlosin13} Section 5.2.1). We also call a continuous finite-dimensional $G_{\Sigma}$-representation $V$ over $\bfQ_p$  (short) \emph{crystalline} if, for all primes $\fp \mid p$,
${\rm Fil}^0 D=D$ and ${\rm Fil}^{p-1} D=(0)$ for the filtered vector space $D=(B_{\rm crys} \otimes_{\bfQ_p} V)^{D_{\fp}}$ defined by Fontaine (for details see \cite{BergerKlosin13} Section 5.2.1).

Following Mazur we call two representations $\tilde r_1, \tilde r_2: G_{\Sigma} \rightarrow \GL_{m}(A)$ for $A \in \textup{LCN}(E)$ such that $r = \tilde r_1=\tilde r_2 \pmod{\fm_A}$ \emph{strictly equivalent} if there exists $M \in {\rm ker}({\rm GL}_2(A) \to {\rm GL}_2(\bfF))$ such that $\tilde r_1=M \tilde r_2 M^{-1}$. A (crystalline) \emph{$\Oo$-deformation} of $r$ is then a pair
consisting of $A \in \textup{LCN}(E)$ and a strict equivalence class of continuous representations $\tilde r: G_{\Sigma} \rightarrow \GL_{m}(A)$ that are crystalline 
at the primes dividing $p$ 
and
such that $r = \tilde r \pmod{\fm_A}$, where $\fm_A$ is the maximal ideal of $A$. (So, in particular we do not impose on
our lifts any conditions at primes in $\Sigma \setminus \Sigma_p$.) 
 Later we assume that if $\fq \in \Sigma$, then $\# k_{\fq} \not\equiv 1$ (mod $p$), which means that all deformations we consider will trivially be ``$\Sigma$-minimal".
As is customary we will denote a deformation by a single member of
its strict equivalence class.

If $r$ has a scalar centralizer then the deformation functor is representable by $R_{r} \in \textup{LCN}(E)$ since crystallinity is a deformation condition in the sense of \cite{Mazur97}. We denote the universal crystalline $\Oo$-deformation by $\rho_{r} :
G_{\Sigma} \rightarrow \GL_{m}(R_{r})$. Then for every $A \in \textup{LCN}(E)$ there is a one-to-one correspondence between the set
of $\Oo$-algebra maps $R_{r} \rightarrow A$  and the set of crystalline deformations $\tilde r: G_{\Sigma}
\rightarrow \GL_{m}(A)$ of $r$.

For $j\in \{1,2\}$ let $\tau_j: G_{\Sigma} \to \GL_{n_j}(\bfF)$ be an absolutely irreducible continuous representation. Assume that $\tau_1 \not\cong \tau_2$. Consider the set of isomorphism classes of $n$-dimensional residual (crystalline at all primes $\fp \mid p$) representations of the form: \be \label{form1}\tau=\bmat \tau_1 & * \\ & \tau_2\emat: G_{\Sigma} \rightarrow \GL_n(\bfF),\ee which are non-semi-simple ($n=n_1+n_2$).

 From
now on assume $p \nmid n!$.

\begin{lemma} Every representation $\tau$ of the form (\ref{form1}) has scalar centralizer. \end{lemma}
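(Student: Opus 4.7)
The plan is to write any element $M$ of the centralizer in block form compatible with the filtration $0 \subset \tau_1 \subset \tau$, namely
\[
M = \begin{pmatrix} A & B \\ C & D \end{pmatrix}, \qquad \tau(g) = \begin{pmatrix} \tau_1(g) & \phi(g) \\ 0 & \tau_2(g) \end{pmatrix},
\]
where $\phi: G_{\Sigma} \to \Hom(\tau_2,\tau_1)$ is a $1$-cocycle, and then unpack the identity $M \tau(g) = \tau(g) M$ into four block equations. I would carry out the argument in the following order.

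First I would look at the lower-left block, which gives $C\tau_1(g) = \tau_2(g) C$; since $\tau_1$ and $\tau_2$ are absolutely irreducible and non-isomorphic, Schur's lemma forces $C = 0$. Plugging this back, the diagonal blocks yield $A\tau_1(g) = \tau_1(g) A$ and $D\tau_2(g) = \tau_2(g) D$, so absolute irreducibility (again Schur) gives $A = \alpha I$ and $D = \delta I$ for scalars $\alpha,\delta \in \bfF$.

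The key step is the upper-right block, which after the above simplifications becomes
\[
(\alpha - \delta)\,\phi(g) \;=\; \tau_1(g) B - B\,\tau_2(g).
\]
If $\alpha \neq \delta$, this exhibits $\phi$ as a coboundary in $Z^1(G_\Sigma, \Hom(\tau_2, \tau_1))$, i.e.\ the class of $\phi$ in $\Ext^1_{G_\Sigma}(\tau_2,\tau_1)$ is trivial; but then $\tau$ would be semi-simple, contrary to assumption. Hence $\alpha = \delta$, and the displayed equation collapses to $\tau_1(g)B = B\tau_2(g)$; applying Schur once more (using $\tau_1 \not\cong \tau_2$) gives $B = 0$. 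Therefore $M = \alpha I$ is scalar.

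The only subtle point is the third step, and it is really just a translation between the language of extensions and the language of cocycles: non-semi-simplicity of $\tau$ is exactly the statement that $\phi$ is not a coboundary, which is what blocks the possibility $\alpha \neq \delta$. No hypothesis on $p$ or on the characters at primes in $\Sigma$ is needed; only absolute irreducibility of the $\tau_i$, the non-isomorphism $\tau_1 \not\cong \tau_2$, and the non-split shape of $\tau$ are used.
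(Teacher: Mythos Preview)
Your argument is correct and is exactly the standard unpacking of this fact; the paper's own proof consists solely of the words ``This is easy,'' so you have simply supplied the details the authors omitted. The block computation, the three applications of Schur's lemma, and the identification of the splitting condition with $\phi$ being a coboundary are all accurate.
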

\begin{proof} This is easy.  \end{proof}

\subsection{Pseudo-representations and pseudo-deformations}

We next recall the notion of a pseudo-representation (or pseudo-character) and pseudo-deformations (from \cite{BellaicheChenevierbook} Section 1.2.1 and \cite{Boeckle11} Definition 2.2.2). 

\begin{definition} Let $A$ be a topological ring and $R$ a topological $A$-algebra. A (continuous) $A$-valued pseudo-representation on $R$ of dimension $d$,
for some $d\in \bfN_{>0}$, is a continuous function $T : R \to A$ such that

\begin{enumerate}[(i)]
\item $T(1)=d$ and $d!$ is a non-zero divisor of $A$;
\item $T$ is \emph{central}, i.e. such that $T(xy)=T(yx)$ for all $x,y \in R$;

\item $d$ is minimal such that $S_{d+1}(T)(x)=0$, where, for every integer $N \geq 1$, $S_{N}(T): R^{N} \to A$ is given by $$S_{N}(T)(x)= \sum_{\sigma \in \mathcal{S}_{N}} \epsilon(\sigma) T^{\sigma}(x),$$ where for a cycle $\sigma=(j_1, \ldots j_m)$ we define $T^{\sigma}((x_1, \ldots x_{d+1}))=T(x_{j_1} \cdots x_{j_m})$, and for a general permutation $\sigma$ with cycle decomposition $\prod_{i=1}^r \sigma_i$ we let $T^{\sigma}(x)=\prod_{i=1}^r T^{\sigma_i}(x)$. 

\end{enumerate}
In the case when $R=A[G_{\Sigma}]$ the pseudo-representation $T$ is determined by its restriction to $G_{\Sigma}$  (see \cite{BellaicheChenevierbook} Section 1.2.1)  and we will also call the restriction of $T$ to $G_{\Sigma}$ a pseudo-representation. \end{definition}

We note that if $\rho: A[G_{\Sigma}] \to M_n(A)$ is a morphism of $A$-algebras then $\tr \rho$ is a pseudo-representation of dimension $n$ (see \cite{BellaicheChenevierbook} Section 1.2.2).

According to \cite{BellaicheChenevierbook} Section 1.2.1, if $T: R \to A$ is a pseudo-representation of dimension $d$ and $A'$ an $A$-algebra, then $T \otimes A': R \otimes A' \to A'$ is again a pseudo-representation of dimension $d$.

Following \cite{SkinnerWiles99} (see also \cite{Boeckle11} Section 2.3) we define a pseudo-deformation of $\tr \tau_1 + \tr \tau_2$ to be a pair $(T,A)$
consisting of $A \in \textup{LCN}(E)$ and a continuous pseudo-representation $T:G_{\Sigma} \to A$
such that $T = \tr \tau_1 + \tr \tau_2 \pmod{\fm_A}$, where $\fm_A$ is the maximal ideal of $A$.

By the sentence following \cite{SkinnerWiles99} Lemma 2.10 (see also \cite{Boeckle11} Proposition 2.3.1) there exists a universal pseudo-deformation ring $R^{\rm ps} \in {\rm LCN}(E)$ and we write $T^{\rm ps}: G_{\Sigma} \to R^{\rm ps}$ for the universal pseudo-deformation.
For every $A \in \textup{LCN}(E)$ there is a one-to-one correspondence between the set
of $\Oo$-algebra maps $R^{\rm ps} \rightarrow A$ and the set of pseudo-deformations $T:G_{\Sigma} \to A$ of $\tr \tau_1 + \tr \tau_2$. Any deformation of a representation $\tau$ as in (\ref{form1}) gives rise (via its trace) to a pseudo-deformation of $\tr \tau_1 + \tr \tau_2$, so there exists a unique $\Oo$-algebra map $R^{\rm ps} \to R_{\tau}$ such that  the trace of the deformation equals the composition of $T^{\rm ps}$ with $R^{\rm ps} \to R_{\tau}$.

We write $R_{\tau}^{\rm red}$ for the quotient of $R_{\tau}$ by its nilradical and $\rho_{\tau}^{\rm red}$ for the corresponding universal deformation, i.e. the composite of $\rho_{\tau}$ with $R_{\tau} \twoheadrightarrow R_{\tau}^{\rm red}$. We further write $R_{\tau}^{\rm tr}\subset R_{\tau}^{\rm red}$ for the closed $\Oo$-subalgebra of $R_{\tau}^{\rm red}$ generated by the set $$S:=\{\tr \rho_{\tau} (\Frob_{\fq}) \mid \fq \not\in \Sigma\}.$$

\begin{lemma} \label{2} The image of $R^{\rm ps} \to R_{\tau}^{\rm red}$ is $R^{\rm tr}_{\tau}$  and hence $R_{\tau}^{\rm tr}$ is an object  in the category $\textup{LCN}(E)$.. \end{lemma}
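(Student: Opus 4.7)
The plan rests on two ingredients: the standard fact that $R^{\rm ps}$ is topologically generated as an $\Oo$-algebra by the set $\{T^{\rm ps}(g) : g \in G_{\Sigma}\}$ (see e.g.\ \cite{Boeckle11}, Proposition~2.3.1), and the Chebotarev density theorem. Under the canonical $\Oo$-algebra map $\phi \colon R^{\rm ps} \to R_{\tau}^{\rm red}$ induced, via the universal property of $R^{\rm ps}$, by the pseudo-deformation $\tr \rho_{\tau}^{\rm red}$, one has $\phi(T^{\rm ps}(g)) = \tr \rho_{\tau}^{\rm red}(g)$ for every $g \in G_{\Sigma}$. Combined with the generation statement, this immediately identifies $\image \phi$ with the closed $\Oo$-subalgebra of $R_{\tau}^{\rm red}$ topologically generated by $\{\tr \rho_{\tau}^{\rm red}(g) : g \in G_{\Sigma}\}$.

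To match this with $R_{\tau}^{\rm tr}$, I would invoke Chebotarev to conclude that $\{\Frob_{\fq} : \fq \notin \Sigma\}$ is dense in $G_{\Sigma}$. Since $\tr \rho_{\tau}^{\rm red}$ is continuous and $R_{\tau}^{\rm red}$ is Hausdorff (being in LCN$(E)$), every element $\tr \rho_{\tau}^{\rm red}(g)$ lies in the closure of the Frobenius traces $\{\tr \rho_{\tau}^{\rm red}(\Frob_{\fq})\}$. Therefore the closed $\Oo$-subalgebras generated by these two sets coincide, and the latter is by definition $R_{\tau}^{\rm tr}$; thus $\image \phi = R_{\tau}^{\rm tr}$.

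For the ``hence'' conclusion, $\ker \phi$ is a closed ideal of $R^{\rm ps}$, so the quotient $R^{\rm ps}/\ker \phi$ is a complete local Noetherian $\Oo$-algebra with residue field $\bfF$, hence an object of LCN$(E)$. The induced injection $R^{\rm ps}/\ker \phi \hookrightarrow R_{\tau}^{\rm red}$ has closed image (a complete subspace of a Hausdorff space is closed), identifying $R_{\tau}^{\rm tr}$ with this quotient and endowing it with its LCN$(E)$ structure.

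The only non-formal step is the generation statement for $R^{\rm ps}$. If one preferred to reprove it, the approach would be to let $R' \subseteq R^{\rm ps}$ denote the closed $\Oo$-subalgebra topologically generated by $\{T^{\rm ps}(g)\}$: the restriction of $T^{\rm ps}$ factors through $R'$ as a pseudo-deformation, and (provided $R'$ itself lies in LCN$(E)$) the universal property yields a section of $R' \hookrightarrow R^{\rm ps}$, forcing equality. Verifying the LCN$(E)$ hypothesis on $R'$ is the delicate point, which I would circumvent by a Nakayama argument: pseudo-deformations valued in $\bfF[\epsilon]/\epsilon^2$ are determined by their values on $G_{\Sigma}$, so the images of $\{T^{\rm ps}(g)\}$ span the cotangent space $\fm_{R^{\rm ps}}/(\varpi, \fm_{R^{\rm ps}}^2)$, and topological generation follows by completeness.
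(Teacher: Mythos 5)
Your argument is correct and follows essentially the same route as the paper's (one-line) proof: the paper cites the fact that $R^{\rm ps}$ is topologically generated by the Frobenius values of $T^{\rm ps}$ and that $R^{\rm tr}_{\tau}$ is closed, while you split this into two steps (topological generation of $R^{\rm ps}$ by all $T^{\rm ps}(g)$, then Chebotarev density of Frobenii) and add the Nakayama justification for the generation statement. The only place worth a word of caution is the parenthetical ``a complete subspace of a Hausdorff space is closed'': to apply this you must first know that the $\fm$-adic topology on $R^{\rm ps}/\ker\phi$ agrees with the subspace topology induced from $R_{\tau}^{\rm red}$, which is a small but nonvacuous point (it follows from Chevalley's theorem for complete Noetherian local rings, and is the same sort of issue the paper handles via Matsumura Theorem~8.1 in the proof of Lemma~\ref{actual Hecke}). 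Since the paper itself elides this, it is not a gap in your proposal relative to the source, but it would be cleaner to flag it.
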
 

\begin{proof} This is clear (cf. \cite{ChoVatsal03} Theorem 3.11) since $R^{\rm ps}$ is topologically generated by $T(\Frob_{\fp}$) (and $R_{\tau}^{\rm tr}$ is closed). 
\end{proof}

\subsection{Selmer groups} For a crystalline $p$-adic $G_{\Sigma}$-module $M$ (finitely generated or cofinitely generated over $\Oo$ - for precise definitions cf. \cite{BergerKlosin13}, section 5) we define the Selmer group $H^1_{\Sigma}(F, M)$ to be the subgroup of $H^1_{\rm cont}(F_{\Sigma}, M)$ consisting of cohomology classes which are crystalline at all primes $\fp$ of $F$ dividing $p$. Note that we place no restrictions at the primes in $\Sigma$ that do not lie over $p$. For more details cf. [loc.cit.]. 

We are now going to state our assumptions. The role of the first one is to rigidify the problem of deforming the representations $\tau_j$ appearing on the diagonal of the residual representations. The role of the second is to rule out characteristic zero upper triangular deformations.
\begin{assumption} \label{ass1} Assume that $R_{\tau_j}=\Oo$ and denote by $\tilde{\tau}_j$ the unique lifts of $\tau_j$ to $\GL_{n_j}(\Oo)$. \end{assumption}

\begin{assumption} [``Bloch-Kato conjecture''] \label{BK} One has the following bound:
$$\# H^1_{\Sigma}(F, \Hom_{\Oo}(\tilde{\tau}_2, \tilde{\tau}_1)\otimes_{\Oo}E/\Oo) \leq \# \Oo/L,$$ for some non-zero $L \in \Oo$. \end{assumption}
\begin{rem} In applications the constant $L$ will be the special $L$-value at zero of the Galois representation $\Hom_{\Oo}(\tilde{\tau}_2, \tilde{\tau}_1)$ divided by an appropriate period. \end{rem}

For the remainder of this section we will work under the above two assumptions.

\subsection{Ideal of reducibility}
Let $A$ be a Noetherian Henselian local (commutative) ring with maximal ideal $\fm_A$ and residue field $\bfF$ and let $R$ be an $A$-algebra. 
We recall from \cite{BellaicheChenevierbook} Proposition 1.5.1 the definition of the ideal of reducibility of a (residually multiplicity free) pseudo-representation $T:R \to A$ of dimension $n$, for which we assume that
$$T = \tr \tau_1 + \tr \tau_2 \mod{\fm_A}$$ 

\begin{definition} [\cite{BellaicheChenevierbook} Proposition 1.5.1 and Definition 1.5.2]
	There exists a smallest ideal $I$ of $A$ such that $T$ mod $I$ is the sum of two pseudo-characters $T_1,
T_2$ with $T_i = \tr \tau_i$ mod $\fm_A$. We call this smallest ideal the \emph{ideal of reducibility} of $T$ and denote it by $I_T$. \end{definition}

\begin{definition}
 We will write $I^{\rm ps} \subset R^{\rm ps}$ for the ideal of reducibility of the universal pseudo-deformation $T^{\rm ps}: R^{\rm ps}[G_{\Sigma}] \to R^{\rm ps}$, $I_{\tau} \subset R_{\tau}$ for the ideal of reducibility of $\tr \rho_{\tau}:R_{\tau}[G_{\Sigma}] \to R_{\tau}$, $I_{\tau}^{\rm red} \subset R_{\tau}^{\rm red}$ for the ideal of reducibility of $\tr \rho_{\tau}^{\rm red}:R_{\tau}^{\rm red}[G_{\Sigma}] \to R_{\tau}^{\rm red}$ and $I^{\rm tr}_{\tau}$ for the ideal of reducibility of $\tr \rho_{\tau}^{\rm red}:R^{\rm tr}_{\tau}[G_{\Sigma}] \to R^{\rm tr}_{\tau}$. 
\end{definition}

\begin{lemma} \label{l2.6}
Let $I_{0}$ be the smallest closed ideal of $R_{\tau}^{\rm tr}$ containing the set $$\{  \tr \rho_{\tau}^{\rm red}(\Frob_v) - \tr \tilde{\tau}_1(\Frob_v) - \tr \tilde{\tau}_2(\Frob_v) \mid v \not\in \Sigma \}. $$ 
Then $I_{0}$ equals the ideal of reducibility $I_{\tau}^{\rm tr}\subset R_{\tau}^{\rm tr}$.
\end{lemma}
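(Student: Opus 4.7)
The plan is to prove the two inclusions $I_0 \subseteq I_{\tau}^{\rm tr}$ and $I_{\tau}^{\rm tr} \subseteq I_0$ separately. The key inputs will be Assumption \ref{ass1} (which rigidifies pseudo-deformations of $\tr\tau_i$) together with Chebotarev density plus continuity, the latter to pass from a congruence on Frobenii to a congruence on all of $G_\Sigma$.

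First, for the inclusion $I_0 \subseteq I_{\tau}^{\rm tr}$, I would unpack the definition of the ideal of reducibility: modulo $I_{\tau}^{\rm tr}$, by construction, the pseudo-character $\tr \rho_{\tau}^{\rm red}$ decomposes as $T_1 + T_2$, where each $T_i$ is a pseudo-character on $R_{\tau}^{\rm tr}/I_{\tau}^{\rm tr}$ of dimension $n_i$ with $T_i \equiv \tr\tau_i \pmod{\fm}$. Since $\tau_i$ is absolutely irreducible, pseudo-deformations of $\tr\tau_i$ correspond bijectively to deformations of $\tau_i$ (Nyssen/Rouquier), so the universal pseudo-deformation ring of $\tr\tau_i$ coincides with $R_{\tau_i} = \Oo$ by Assumption \ref{ass1}. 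Hence $T_i$ equals the composition of $\tr\tilde\tau_i$ with the structure map $\Oo \to R_{\tau}^{\rm tr}/I_{\tau}^{\rm tr}$. Applying this to $\Frob_v$ for each $v \notin \Sigma$ shows that every generator of $I_0$ lies in $I_{\tau}^{\rm tr}$; since $R_{\tau}^{\rm tr}$ is Noetherian, $I_{\tau}^{\rm tr}$ is automatically closed, and the inclusion follows.

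Second, for $I_{\tau}^{\rm tr} \subseteq I_0$, I would construct the required decomposition of $\tr\rho_{\tau}^{\rm red}$ modulo $I_0$ directly. By the definition of $I_0$ we have
\[
\tr\rho_{\tau}^{\rm red}(\Frob_v) \equiv \tr\tilde\tau_1(\Frob_v) + \tr\tilde\tau_2(\Frob_v) \pmod{I_0}
\]
for every $v \notin \Sigma$. Both sides are continuous functions $G_\Sigma \to R_{\tau}^{\rm tr}/I_0$; by Chebotarev density the set $\{\Frob_v : v \notin \Sigma\}$ is dense in $G_\Sigma$, so the congruence extends to all of $G_\Sigma$. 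The right-hand side is the sum of two pseudo-characters of dimensions $n_1, n_2$ reducing to $\tr\tau_1, \tr\tau_2$ modulo $\fm$, so $\tr\rho_{\tau}^{\rm red}\pmod{I_0}$ is reducible in the sense of the definition of the ideal of reducibility. By the minimality of $I_{\tau}^{\rm tr}$ we conclude $I_{\tau}^{\rm tr} \subseteq I_0$.

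The only genuinely nontrivial step is the identification $T_i = \tr\tilde\tau_i$ in the first inclusion, where we invoke Assumption \ref{ass1} together with the fact that pseudo-deformations of an absolutely irreducible residual representation are the same as its deformations; all other ingredients are formal manipulations with pseudo-characters plus a standard density argument.
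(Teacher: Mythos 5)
Your proof is correct and follows essentially the same route as the paper's: one inclusion via Chebotarev density to extend the Frobenius congruence to all of $G_\Sigma$ and then invoke minimality of $I_\tau^{\rm tr}$, and the other via rigidity of the diagonal pieces (Assumption \ref{ass1} together with a Nyssen/Rouquier-type correspondence, which the paper cites as Theorems 7.6--7.7 of \cite{BergerKlosin13}) to force $T_i=\tr\tilde\tau_i$ modulo $I_\tau^{\rm tr}$. The only stylistic difference is the order in which you treat the two inclusions.
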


\begin{proof}
By the Chebotarev density theorem we get $  \tr \rho_{\tau}^{\rm red}= \tr\tilde{\tau}_1 + \tr\tilde{\tau}_2$ (mod $I_{0}$), hence $I_0 \supset I_{\tau}^{\rm tr}$. 
Conversely, we know from the definition of the ideal of reducibility that $\tr \rho_{\tau}^{\rm red}$ (mod $I^{\rm tr}_{\tau}$) is given by the sum of two pseudo-characters reducing to $\tr \tau_i$. By Assumption \ref{ass1} and Theorems 7.6 
and 7.7 of \cite{BergerKlosin13}  (see also \cite{Boeckle11} Theorem 2.4.1)  these two pseudo-characters must equal $\tr \tilde{\tau}_i$ (mod $I_{\tau}^{\rm tr}$). This shows that $I_{\tau}^{\rm tr} \supset I_0$.
\end{proof}

\begin{cor}
The quotient $R_{\tau}^{\rm tr}/I_{\tau}^{\rm tr}$ is cyclic. \qed
\end{cor}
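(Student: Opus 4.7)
The plan is to combine Lemma \ref{l2.6} with the fact that $R_\tau^{\rm tr}$ is, by definition, topologically generated over $\Oo$ by the traces $\tr \rho_\tau^{\rm red}(\Frob_v)$ for $v \notin \Sigma$. First, I would observe that under the quotient map $R_\tau^{\rm tr} \twoheadrightarrow R_\tau^{\rm tr}/I_\tau^{\rm tr}$, Lemma \ref{l2.6} tells us that each generator $\tr \rho_\tau^{\rm red}(\Frob_v)$ is congruent modulo $I_\tau^{\rm tr}$ to the scalar $\tr \tilde\tau_1(\Frob_v) + \tr \tilde\tau_2(\Frob_v) \in \Oo$.

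Consequently, the image of the topological generating set $S = \{\tr \rho_\tau^{\rm red}(\Frob_v) \mid v \notin \Sigma\}$ in $R_\tau^{\rm tr}/I_\tau^{\rm tr}$ lies in the image of the structure map $\Oo \to R_\tau^{\rm tr}/I_\tau^{\rm tr}$. Since $S$ topologically generates $R_\tau^{\rm tr}$ as an $\Oo$-algebra, its image topologically generates the quotient as an $\Oo$-algebra, and we conclude that the structure map $\Oo \to R_\tau^{\rm tr}/I_\tau^{\rm tr}$ is surjective. Therefore $R_\tau^{\rm tr}/I_\tau^{\rm tr}$ is a cyclic $\Oo$-module (in fact a quotient of $\Oo$), which is the claim.

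There is no real obstacle here; the statement is essentially a direct reformulation of Lemma \ref{l2.6} once one remembers that $R_\tau^{\rm tr}$ was defined as the closed $\Oo$-subalgebra generated by traces of Frobenius elements. The only minor point to be careful about is the passage from ``topological generation as an $\Oo$-algebra'' to the surjectivity of $\Oo \to R_\tau^{\rm tr}/I_\tau^{\rm tr}$, which uses that $R_\tau^{\rm tr}/I_\tau^{\rm tr}$ is still an object of $\textup{LCN}(E)$ (so the image of $\Oo$ is closed, being the image of a complete local ring in a Noetherian local $\Oo$-algebra) together with the fact that sums and products of elements of $\Oo$ remain in $\Oo$.
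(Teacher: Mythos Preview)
Your proof is correct and is exactly the intended argument: the paper's proof is simply ``\qed'' because the corollary is immediate from Lemma \ref{l2.6} together with the definition of $R_\tau^{\rm tr}$ as the closed $\Oo$-subalgebra generated by the traces $\tr \rho_\tau^{\rm red}(\Frob_v)$. Your write-up just spells out this one-line implication.
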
 

\begin{rem} \label{r2.10}
Combined with Lemma 7.11 of \cite{BergerKlosin13} this shows that for any pseudo-deformation $T:A[G_{\Sigma}] \to A$ of $\tr \tau_1 + \tr \tau_2$  with ideal of reducibility $I_T$ for 
which there is a surjection $R_{\tau}^{\rm tr} \to A$, the quotient $A/I_T$ is cyclic.
\end{rem}

\begin{prop} \label{tor} The module $R_{\tau}/I_{\tau}$ is a torsion $\Oo$-module. \end{prop}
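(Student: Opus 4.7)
The plan is to prove the claim by first showing that Assumption \ref{BK}, combined with the absolute irreducibility and distinctness of $\tau_1,\tau_2$, forces the integral Selmer group $H^1_{\Sigma}(F,\textup{Hom}_{\mathcal{O}}(\tilde\tau_2,\tilde\tau_1))$ to vanish, and then arguing that this precludes the existence of any characteristic-zero reducible deformation of $\tau$, giving the $\mathcal{O}$-torsion conclusion for $R_\tau/I_\tau$.

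First I would establish that $H^1_\Sigma(F,T)=0$, where $T:=\textup{Hom}_{\mathcal{O}}(\tilde\tau_2,\tilde\tau_1)$. Since $\tau_1\not\cong\tau_2$ are absolutely irreducible, $(T/\varpi T)^{G_\Sigma}=0$, and a Nakayama-type argument gives $(T/\varpi^n T)^{G_\Sigma}=0$ for all $n$, so $H^0(F,T\otimes E/\mathcal{O})=0$. The cohomology long exact sequence attached to $0\to T\to T\otimes E\to T\otimes E/\mathcal{O}\to 0$ (with crystalline conditions propagated throughout, using compatibility of Fontaine--Lafaille with $\mathcal{O}$-flat base change) then yields an injection $H^1_\Sigma(F,T)\hookrightarrow H^1_\Sigma(F,T\otimes E)$, so $H^1_\Sigma(F,T)$ is $\mathcal{O}$-torsion-free. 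Assumption \ref{BK} says $H^1_\Sigma(F,T\otimes E/\mathcal{O})$ is finite; since $H^1_\Sigma(F,T\otimes E)\cong H^1_\Sigma(F,T)\otimes_{\mathcal{O}} E$ is a $\bfQ_p$-vector space, its image in the finite group $H^1_\Sigma(F,T\otimes E/\mathcal{O})$ must be zero, whence the next map in the long exact sequence forces $H^1_\Sigma(F,T)\twoheadrightarrow H^1_\Sigma(F,T)\otimes_\mathcal{O} E$. For a finitely generated torsion-free $\mathcal{O}$-module this is possible only if $H^1_\Sigma(F,T)=0$.

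Next, suppose for contradiction that $R_\tau/I_\tau$ is not $\mathcal{O}$-torsion. Then $\varpi$ is not nilpotent in $R_\tau/I_\tau$, so one may pick a minimal prime $\mathfrak{q}$ with $\varpi\notin\mathfrak{q}$ and further specialize to an $\mathcal{O}$-algebra map $\phi:R_\tau/I_\tau\to\mathcal{O}'$ for some finite DVR extension $\mathcal{O}'/\mathcal{O}$ (by taking a closed point of the generic fibre of $\Spec((R_\tau/I_\tau)/\mathfrak{q})$). The resulting representation $\rho:=\phi\circ\rho_\tau:G_\Sigma\to\GL_n(\mathcal{O}')$ is a crystalline deformation of $\tau$ with $\tr\rho=\tr\tilde\tau_1+\tr\tilde\tau_2$ by Lemma \ref{l2.6} and Assumption \ref{ass1}. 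Since $\tilde\tau_1\otimes E'$ and $\tilde\tau_2\otimes E'$ are absolutely irreducible and non-isomorphic, $\rho\otimes E'$ is semisimple and isomorphic to $\tilde\tau_1\otimes E'\oplus\tilde\tau_2\otimes E'$. The classical Ribet lattice construction, setting $L_1:=L\cap V_1$, $L_2:=L/L_1$ with $L=(\mathcal{O}')^n$ and $V_j=\tilde\tau_j\otimes E'$, produces an exact sequence $0\to L_1\to L\to L_2\to 0$ of $\mathcal{O}'[G_\Sigma]$-modules with $L_j$ a lattice in $V_j$; Assumption \ref{ass1} identifies $L_j\cong\tilde\tau_j\otimes_\mathcal{O}\mathcal{O}'$. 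The extension class lies in $H^1_\Sigma(F,T\otimes_\mathcal{O}\mathcal{O}')=H^1_\Sigma(F,T)\otimes_\mathcal{O}\mathcal{O}'=0$ by the previous paragraph and flat base change, so the sequence splits, forcing $\rho\equiv\tau_1\oplus\tau_2\pmod{\mathfrak{m}_{\mathcal{O}'}}$ and contradicting the non-splitness of $\tau$. Thus $R_\tau/I_\tau$ is $\mathcal{O}$-torsion.

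The main obstacle is the Galois-cohomology bookkeeping in the first step: one must verify that each map in the long exact sequence respects the crystalline Selmer conditions and that flat base change applies to $H^1_\Sigma$, both of which rely on standard but non-trivial compatibility statements for the Fontaine--Lafaille functor under $\mathcal{O}$-flat base change. A secondary point is the identification $L_j\cong\tilde\tau_j\otimes_\mathcal{O}\mathcal{O}'$ in the second step, which uses absolute irreducibility of $\tau_j$ (preserved by extending residue fields) to propagate Assumption \ref{ass1} from $\mathcal{O}$ to $\mathcal{O}'$.
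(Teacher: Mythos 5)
Your proposal is correct in outline but follows a genuinely different route than the paper's, and one step needs repair. The paper works purely with the finite quotients $S/\varpi^\sigma S$ of $S = R_\tau/I_\tau$: it splits $S/\varpi^\sigma S$ as $A\oplus B$ over $\Oo$ with $A\cong\Oo/\varpi^\sigma\Oo$ (Lemma~\ref{Hunger}), projects the reducible upper-triangular deformation onto the $A$-component to produce a crystalline class of exact order $\varpi^\sigma$ in $H^1_\Sigma(F,\Hom_\Oo(\tilde\tau_2,\tilde\tau_1)\otimes E/\Oo)$, and lets $\sigma\to\infty$ to contradict the finiteness in Assumption~\ref{BK}. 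You instead first extract from Assumption~\ref{BK} the stronger intermediate vanishing $H^1_\Sigma(F,T)=0$ via the long exact sequence of $0\to T\to T\otimes E\to T\otimes E/\Oo\to 0$, and then specialize a hypothetical non-torsion $R_\tau/I_\tau$ at a characteristic-zero point and run a Ribet lattice argument. This buys a conceptually cleaner contradiction (a single class landing in a vanishing group) at the cost of needing Cohen/Noether normalization to manufacture the char-$0$ specialization, the propagation of Assumption~\ref{ass1} to $\Oo'$, and the Selmer/base-change compatibilities you flag; the paper's argument sidesteps all of this but relies on the module-theoretic splitting and on being able to write the mod-$I_\tau$ deformation in upper-triangular form.

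The one genuine misstep: the claim ``Since $\tilde\tau_1\otimes E'$ and $\tilde\tau_2\otimes E'$ are absolutely irreducible and non-isomorphic, $\rho\otimes E'$ is semisimple'' is not a valid deduction — a representation whose trace is a sum of distinct irreducible traces can perfectly well be a non-split extension of one by the other. What you actually need is only that $V_1:=\tilde\tau_1\otimes E'$ occurs as a \emph{sub}representation of $\rho\otimes E'$, so that $L_1:=L\cap V_1$ makes sense. This can be supplied either from your own first step (a non-split extension of $V_2$ by $V_1$ would be a nonzero crystalline class in $H^1_\Sigma(F,T\otimes E')$, which you have just shown to vanish), or directly from the shape of $\tau$: if instead $V_2=\tilde\tau_2\otimes E'$ were the unique subrepresentation, then the saturated sublattice $L\cap V_2$ would reduce to an $n_2$-dimensional subrepresentation of $\tau$ with Jordan--H\"older constituent $\tau_2$, whereas the only nonzero proper subrepresentation of the non-split $\tau$ is $\tau_1$, and $\tau_1\not\cong\tau_2$. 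With this repair, the remainder of your argument goes through as you describe.
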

\begin{proof} 
Fix $\sigma \in \bfZ_+$ and set $S:=R_{\tau}/I_{\tau}$. Suppose that $S$ is not torsion. Let $\phi: S\twoheadrightarrow R:=S/\varpi^{\sigma}S$ be the canonical surjection (of $\Oo$-algebras). Let $A:= \phi(\Oo)$. We first claim that $A = \Oo/\varpi^{\sigma}\Oo$.
Clearly $\varpi^{\sigma}=0$ in $S/\varpi^{\sigma}S$, so we just need to prove that $\varpi^{\sigma-1} \not\in \varpi^{\sigma}S$. Suppose on the contrary that $\varpi^{\sigma-1} \in \varpi^{\sigma}S$. Then there exists $s \in S$ such that 
\be\label{1}\varpi^{\sigma-1}(1-\varpi s)=0\quad \textup{in $S$}.\ee 
Since the residue field
of $S$ is $\Oo/\varpi=\bfF$, we see that $\varpi$ is not a unit in $S$, and hence
$1-\varpi s$ is a unit in $S$. Thus (\ref{1}) implies that $\varpi^{\sigma-1}=0$ in $S$, which leads to a contradiction and hence we have proved that $A=\Oo/\varpi^{\sigma}\Oo$.

We now use the following lemma.
\begin{lemma} \label{complete} There exists an $\Oo$-submodule $B \subset R$ such that $$R = A \oplus B$$ as $\Oo$-modules. \end{lemma}
\begin{proof} This follows from the following result.

\begin{lemma}[Lemma 6.8(ii), p.222 in \cite{Hungerford}] \label{Hunger} Let $A'$ be a module over a PID $R'$ such that $p^nA'=0$ and $p^{n-1}A' \neq 0$ for some prime $p \in R'$ and a positive integer $n$. Let $a$ be an element of $A'$ of order $p^n$. Then there is a submodule $C'$ of $A'$ such that  $A'=R'a \oplus C'$. \end{lemma} 
Apply Lemma \ref{Hunger} for $R'=\Oo$, $A'=R$, $p=\varpi$, $n=\sigma$, $a=\psi(1)$. Then $R'a = A$. 
\end{proof}
We now finish the proof of Proposition \ref{tor}.  Let $e$ be an $\Oo$-module generator of $A$. Write $\rho_I: G \to \GL_n(R)$ for the deformation corresponding to the canonical map $R_{\tau} \twoheadrightarrow R$. Then we can write $$\rho_I = \bmat \tilde{\tau}_1 & \alpha e +\beta \\ & \tilde{\tau}_2\emat,$$ where $\alpha: G \to M_{n_1 \times n_2}(\Oo)$
and $\beta : G \to M_{n_1 \times n_2}(B)$ are maps (here we identify $\tilde{\tau}_j$ with its composition with $\Oo \to R$). Define $$\rho_I^+: G \to \GL_n(A) \quad g \mapsto \bmat \tilde{\tau}_1(g) & \alpha(g) e \\ & \tilde{\tau}_2(g)\emat.$$ We must check that $\rho_I^+$ is a homomorphism. This follows easily from the fact that $\rho_I$ is a homomorphism and the fact that $A$ is a direct summand of $R$. 
Moreover, note that the image of $\alpha$ is not contained in $M_{n_1 \times n_2}(\varpi \Oo)$ because $\rho_I$ reduces to $\tau$ which is not semi-simple.

Note that $\rho^+_I$ is an upper-triangular deformation into $\GL_n(\Oo/\varpi^{\sigma})$. Moreover, since $\rho_I^+$ reduces to $\tau$, it gives rise to an element in $H^1_{\Sigma}(F, \Hom_{\Oo}(\tilde{\tau}_2, \tilde{\tau}_1)\otimes E/\Oo)$ which generates an $\Oo$-submodule isomorphic to $\Oo/\varpi^{\sigma}$. Since $\sigma$ was arbitrary we conclude that $H^1_{\Sigma}(F, \Hom_{\Oo}(\tilde{\tau}_2, \tilde{\tau}_1)\otimes E/\Oo)$ must be infinite which contradicts Assumption \ref{BK}. This concludes the proof of Proposition \ref{tor}. 
\end{proof} 

\begin{rem} \label{baddef} If $\dim_{\bfF}H^1_{\Sigma}(F, \Hom(\tau_2, \tau_1))=1$ then $R_{\tau}/I_{\tau}$ and $R_{\tau}^{\rm red}/I_{\tau}^{\rm red}$ are cyclic $\Oo$-modules by Corollary 7.12 in \cite{BergerKlosin13} which combined with Proposition \ref{tor} implies finiteness of  $R_{\tau}/I_{\tau}$ and $R_{\tau}^{\rm red}/I_{\tau}^{\rm red}$. On the other hand given that $\dim_{\bfF}H^1_{\Sigma}(F, \Hom(\tau_2, \tau_1))>1$ it is easy to construct an upper-triangular (not necessarily crystalline) lift of $\tau$ to $\bfF[[X]]$ which would suggest that in general $R_{\tau}/I_{\tau}$, and even $R_{\tau}^{\rm red}/I_{\tau}^{\rm red}$ (since $\bfF[[X]]$ is reduced),  may have positive Krull dimension. Indeed, to see this, let $f$ be a cohomology class corresponding to $\tau$ and let $g$ be a cohomology class linearly independent from $f$. Then the representation $$\rho= \bmat \tau_1 & \tau_2(f + gX)\\ 0 & \tau_2\emat$$ is a non-trivial lift of $\tau$ to $\GL_n(\bfF[[X]])$. In particular there is no guarantee that $R_{\tau}^{\rm red}$ is a finitely generated $\Oo$-module. Since our method of proving modularity relies on that property we will restrict in the following section to the `characteristic zero' part of $R_{\tau}^{\rm red}$ of which we will demand that it is finite over $\Oo$. \end{rem}

\subsection{The ring $R_{\tau}^0$} \label{r0section}
Set $\mP(\tau):=\{\fp \in \Spec(R_{\tau}) \mid R_{\tau}/\fp = \Oo\}$. 
For the rest of this article we assume the following:
\begin{assumption} \label{finiteness}
Assume that $\mP(\tau)$ is finite.
\end{assumption} We then define $R_{\tau}^0$ to be the  image of $R_{\tau}$ in $\prod_{\fp \in \mP(\tau)}\Oo$.  
It is clear that $R^0_{\tau}$ is  a finitely generated $\Oo$-module  and  an object in $\textup{LCN}(E)$. Note that the canonical surjection $R_{\tau} \twoheadrightarrow R_{\tau}^0$ factors through $R_{\tau}^{\rm red}$. Write $\rho_{\tau}^0$ for the composition of $\rho_{\tau}$ with the map $\varphi_{\tau}: R_{\tau} \twoheadrightarrow R_{\tau}^0$. Write $I_{\tau}^0$ for the ideal of reducibility of $\tr \rho_{\tau}^0$. By \cite{BergerKlosin13}, Lemma 7.11, we have $\varphi_{\tau}(I_{\tau}) \subset I_{\tau}^0$ (in fact equality holds since the opposite inclusion is obvious) and thus $\varphi_{\tau}$ induces a surjection $R_{\tau}/I_{\tau} \twoheadrightarrow R_{\tau}^0/I_{\tau}^0$.

\begin{lemma} \label{finiteq} The quotient $R_{\tau}^0/I_{\tau}^0$ is finite. \end{lemma}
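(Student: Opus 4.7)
The plan is to combine the torsion statement already established in Proposition \ref{tor} with the finiteness of $R_\tau^0$ as an $\Oo$-module.

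First, I would recall that the map $\varphi_\tau : R_\tau \twoheadrightarrow R_\tau^0$ sends $I_\tau$ onto $I_\tau^0$ (this is noted right before the lemma statement), and therefore induces a surjection of $\Oo$-algebras
\[
R_\tau/I_\tau \twoheadrightarrow R_\tau^0/I_\tau^0.
\]
By Proposition \ref{tor} the source $R_\tau/I_\tau$ is a torsion $\Oo$-module, so the target $R_\tau^0/I_\tau^0$ is torsion as an $\Oo$-module as well.

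Second, I would observe that by construction $R_\tau^0$ embeds into the finite product $\prod_{\fp \in \mP(\tau)} \Oo$, which is finitely generated as an $\Oo$-module (here Assumption \ref{finiteness} is crucial to ensure the index set is finite). Hence $R_\tau^0$ is a finitely generated $\Oo$-module, and so is its quotient $R_\tau^0/I_\tau^0$.

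Finally, since $\Oo$ is a discrete valuation ring, a module that is simultaneously finitely generated and torsion is of finite length, hence finite. This gives the desired conclusion. The argument is essentially a combination of two facts already in hand, so I do not anticipate a real obstacle; the only point to verify carefully is that $\varphi_\tau(I_\tau) = I_\tau^0$, which the text has already recorded as a consequence of Lemma 7.11 of \cite{BergerKlosin13}.
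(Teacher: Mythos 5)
Your proof is correct and follows essentially the same route as the paper: the paper's one-line proof invokes Proposition \ref{tor} and the surjection $R_\tau/I_\tau \twoheadrightarrow R_\tau^0/I_\tau^0$, implicitly using that $R_\tau^0$ is a finitely generated $\Oo$-module (noted just before the lemma) so that a torsion quotient is finite. You have simply made the implicit finite-generation step explicit, which is a clean write-up of the same argument.
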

\begin{proof} This follows immediately from Proposition \ref{tor} and the surjectivity of $R_{\tau}/I_{\tau} \rightarrow R_{\tau}^0/I_{\tau}^0$.\end{proof}

Define $R_{\tau}^{\rm tr, 0} \subset R_{\tau}^0$ to be the closed $\Oo$-subalgebra generated by the set $$S:= \{\tr \rho_{\tau}^0(\Frob_{\fq}) \mid \fq \not\in \Sigma\}.$$

\begin{lemma} \label{Rtr0}  The image of  $R_{\tau}^{\rm tr}$ under $\varphi_{\tau}: R_{\tau} \twoheadrightarrow R_{\tau}^0$ is $R_{\tau}^{\rm tr, 0}$. Thus $R_{\tau}^{\rm tr, 0}$ is an object in the category $\textup{LCN(E)}$.  \end{lemma}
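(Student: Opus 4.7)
The plan is to reduce the claim to two observations: first, that $\varphi_{\tau}$ sends the trace of $\rho_{\tau}$ at a Frobenius to the trace of $\rho_{\tau}^0$ at the same Frobenius; second, that inside the finite $\Oo$-module $R_{\tau}^0$ every $\Oo$-subalgebra is automatically closed, so no completion is needed in the formation of $R_{\tau}^{\rm tr,0}$.

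Concretely, first I would note that since $\varphi_{\tau}\colon R_{\tau}\twoheadrightarrow R_{\tau}^0$ factors through $R_{\tau}^{\rm red}$, it restricts to a continuous $\Oo$-algebra map $\varphi_{\tau}|_{R_{\tau}^{\rm tr}}\colon R_{\tau}^{\rm tr}\to R_{\tau}^0$. By the very definition of $\rho_{\tau}^0$ as the composite of $\rho_{\tau}$ with $\varphi_{\tau}$, one has
\[
\varphi_{\tau}\bigl(\tr\rho_{\tau}(\Frob_{\fq})\bigr)=\tr\rho_{\tau}^0(\Frob_{\fq})\qquad (\fq\notin\Sigma).
\]

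Next I would record the topological point. By Assumption \ref{finiteness}, $R_{\tau}^0$ is a finitely generated $\Oo$-module, and $\Oo$ is a complete discrete valuation ring, hence Noetherian. Therefore every $\Oo$-submodule of $R_{\tau}^0$ is itself finitely generated over $\Oo$ and in particular $\varpi$-adically complete, hence closed in $R_{\tau}^0$. Consequently, the ``closure'' in the definition of $R_{\tau}^{\rm tr,0}$ is redundant: $R_{\tau}^{\rm tr,0}$ equals the (honest) $\Oo$-subalgebra $A_0\subset R_{\tau}^0$ generated by $S=\{\tr\rho_{\tau}^0(\Frob_{\fq}):\fq\notin\Sigma\}$.

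Now let $A\subset R_{\tau}^{\rm red}$ be the (not necessarily closed) $\Oo$-subalgebra generated by $\{\tr\rho_{\tau}(\Frob_{\fq}):\fq\notin\Sigma\}$, so that $R_{\tau}^{\rm tr}=\overline{A}$. The first bullet gives $\varphi_{\tau}(A)=A_0=R_{\tau}^{\rm tr,0}$. On the one hand $\varphi_{\tau}(R_{\tau}^{\rm tr})\supset\varphi_{\tau}(A)=R_{\tau}^{\rm tr,0}$, and on the other hand continuity of $\varphi_{\tau}$ and the fact that $A_0$ is closed in $R_{\tau}^0$ yield
\[
\varphi_{\tau}(R_{\tau}^{\rm tr})=\varphi_{\tau}(\overline{A})\subset\overline{\varphi_{\tau}(A)}=\overline{A_0}=A_0=R_{\tau}^{\rm tr,0}.
\]
Combining the two inclusions proves the first assertion.

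For the second assertion, observe that the continuous surjection $\varphi_{\tau}|_{R_{\tau}^{\rm tr}}\colon R_{\tau}^{\rm tr}\twoheadrightarrow R_{\tau}^{\rm tr,0}$ exhibits $R_{\tau}^{\rm tr,0}$ as a quotient of $R_{\tau}^{\rm tr}$ by a closed ideal; since $R_{\tau}^{\rm tr}\in\textup{LCN}(E)$ by Lemma \ref{2}, such a quotient is again a complete Noetherian local $\Oo$-algebra with residue field $\bfF$, so $R_{\tau}^{\rm tr,0}\in\textup{LCN}(E)$. There is no serious obstacle here; the only thing to watch is the interaction between continuity and closure, which is handled precisely because finite generation over $\Oo$ forces every $\Oo$-submodule of $R_{\tau}^0$ to be closed.
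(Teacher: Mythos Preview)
Your proof is correct and follows essentially the same strategy as the paper's: both show that $\varphi_{\tau}$ carries the generating set of $R_{\tau}^{\rm tr}$ onto $S$, and then use a closure/continuity argument to conclude. Your version is in fact more careful than the paper's terse proof, since you make explicit the key point that every $\Oo$-submodule of the finitely generated $\Oo$-module $R_{\tau}^0$ is automatically closed (the paper implicitly relies on this when invoking density of $S$), and you spell out why $R_{\tau}^{\rm tr,0}\in\textup{LCN}(E)$.
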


\begin{proof} It is clear that $R_{\tau}^{\rm tr, 0}\subset \varphi_{\tau}(R_{\tau}^{\rm tr})$. On the other hand $S \subset \varphi_{\tau}(R_{\tau}^{\rm tr})$, so the equality holds because $S$ is dense in $R_{\tau}^{\rm tr, 0}$. \end{proof}

We will write $I^{\rm tr, 0}_{\tau}\subset R_{\tau}^{\rm tr, 0}$ for the ideal of reducibility of $\tr \rho^0_{\tau}$. By Lemma \ref{Rtr0} and Lemma 7.11 in \cite{BergerKlosin13} we get that $\varphi_{\tau}(I^{\rm tr}_{\tau}) \subset I_{\tau}^{\rm tr, 0}$ (in fact equality holds)  and thus $\varphi_{\tau}$ induces a surjection $R_{\tau}^{\rm tr} /I_{\tau}^{\rm tr} \twoheadrightarrow R_{\tau}^{\rm tr, 0 } /I_{\tau}^{\rm tr, 0}$. By Remark \ref{r2.10} the quotient $R_{\tau}^{\rm tr, 0 } /I_{\tau}^{\rm tr, 0}$ is a cyclic $\Oo$-module.

\subsection{Generic irreducibility of $\rho_{\tau}^{0}$}

\begin{lemma} \label{irr4}
For any $\tau$ as in (\ref{form1})
 $\rho^{0}_{\tau}\otimes_{R^{0}_{\tau}}\mF$ is irreducible. Here $\mF$ is any of the fields $\mF_s$ in $\mF_{\tau}^{0}=\prod_s \mF_s$, where $\mF_{\tau}^{0}$ is the total ring of fractions of $R^{0}_{\tau}$.
\end{lemma}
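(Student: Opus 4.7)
The plan is to reduce the claim to irreducibility of a single crystalline characteristic-zero lift of $\tau$, and then to derive a contradiction from Assumption \ref{BK} by showing the relevant Bloch--Kato Selmer group vanishes. First, observe that the minimal primes of $R^{0}_{\tau}$ are in bijection with $\mP(\tau)$: each arises as the kernel of one of the projections in the inclusion $R^{0}_{\tau} \hookrightarrow \prod_{\fp \in \mP(\tau)} \Oo$, and every such projection is surjective by the definition of $\mP(\tau)$. Consequently each $\mF_s$ embeds into $E$, and, composing with this embedding, $\rho^{0}_{\tau} \otimes_{R^{0}_{\tau}} \mF_s$ is identified with $\tilde\rho_{\fp} \otimes_{\Oo} E$ for the crystalline lift $\tilde\rho_{\fp}: G_{\Sigma} \to \GL_n(\Oo)$ corresponding to $\fp$. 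Hence it suffices to prove that $\tilde\rho_{\fp} \otimes E$ is irreducible for every $\fp \in \mP(\tau)$.

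Suppose for contradiction that $\tilde\rho_{\fp} \otimes E$ admits a proper non-zero $G_{\Sigma}$-stable $E$-subspace $V$, and set $L_V := V \cap \Oo^n$, a saturated $G_{\Sigma}$-stable $\Oo$-sublattice of rank $d = \dim_E V$. Its reduction modulo $\varpi$ is a $d$-dimensional sub-representation of $\tau$; since $\tau_1, \tau_2$ are absolutely irreducible and non-isomorphic and $\tau$ is non-split of the form (\ref{form1}), the only proper non-zero sub-representation of $\tau$ is the copy of $\tau_1$, forcing $d = n_1$ and $L_V/\varpi L_V \cong \tau_1$. Applying Brauer--Nesbitt to $\tilde\rho_{\fp} \otimes E$, whose trace reduces mod $\varpi$ to $\tr \tau_1 + \tr \tau_2$, together with uniqueness of crystalline lifts (Assumption \ref{ass1}) and the fact that subquotients of crystalline representations are crystalline, pins down the Jordan--H\"older constituents of $\tilde\rho_{\fp} \otimes E$ as $\tilde\tau_1 \otimes E$ and $\tilde\tau_2 \otimes E$; comparing with the reduction of $L_V$ then yields $L_V \cong \tilde\tau_1$ and $\tilde\rho_{\fp}/L_V \cong \tilde\tau_2$ as $\Oo[G_{\Sigma}]$-modules. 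The resulting extension $0 \to \tilde\tau_1 \to \tilde\rho_{\fp} \to \tilde\tau_2 \to 0$ produces a class $[c] \in H^1_{\Sigma}(F, \Hom_{\Oo}(\tilde\tau_2, \tilde\tau_1))$ which is non-zero because its reduction modulo $\varpi$ is the (non-zero) extension class of $\tau$.

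It then remains to show $H^1_{\Sigma}(F, M) = 0$, where $M := \Hom_{\Oo}(\tilde\tau_2, \tilde\tau_1)$, contradicting $[c] \neq 0$. From the long exact sequence attached to $0 \to M \to M \otimes_{\Oo} E \to M \otimes_{\Oo} E/\Oo \to 0$, Schur's lemma applied to the $\varpi$-torsion of $M \otimes_{\Oo} E/\Oo$ (which equals $\Hom(\tau_2, \tau_1)$, having no non-zero $G_{\Sigma}$-invariants since $\tau_1 \not\cong \tau_2$) gives $H^0(G_{\Sigma}, M \otimes_{\Oo} E/\Oo) = 0$, yielding an injection $H^1_{\Sigma}(F, M) \hookrightarrow H^1_{\Sigma}(F, M) \otimes_{\Oo} E$. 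If $H^1_{\Sigma}(F, M)$ had positive $\Oo$-rank, the cokernel of this injection would contain a copy of $E/\Oo$, contradicting the finiteness of $H^1_{\Sigma}(F, M \otimes_{\Oo} E/\Oo)$ guaranteed by Assumption \ref{BK}. The main technical point to verify carefully is the compatibility of the long exact sequence with the crystalline Selmer condition (i.e.\ that $H^1_{\Sigma}$ commutes in the expected way with $- \otimes_{\Oo} E$ and fits into the sequence), which is standard in the Fontaine--Lafaille framework used throughout the paper.
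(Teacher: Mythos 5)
Your proof is correct and follows the same core strategy as the paper: a putative proper $G_{\Sigma}$-stable subspace of a characteristic-zero lift $\tilde\rho_{\fp}\otimes E$ forces a non-split integral crystalline extension of $\tilde\tau_2$ by $\tilde\tau_1$, which is then shown incompatible with Assumption~\ref{BK}. The difference lies in how the contradiction is extracted. The paper works directly: from the non-split integral extension it reduces modulo $\varpi^m$ and, using $H^0(G_{\Sigma},\Hom(\tilde\tau_2,\tilde\tau_1)\otimes E/\Oo)=0$, shows the resulting class has exact order $\varpi^m$ in $H^1_{\Sigma}(F,\Hom(\tilde\tau_2,\tilde\tau_1)\otimes E/\Oo)$ for every $m$, so the latter is infinite. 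You instead pass through an intermediate claim, $H^1_{\Sigma}(F,\Hom_{\Oo}(\tilde\tau_2,\tilde\tau_1))=0$, deduced from the long exact sequence attached to $0\to M\to M\otimes E\to M\otimes E/\Oo\to 0$ (torsion-freeness from $H^0=0$, rank~$0$ from finiteness of the divisible Selmer group). This is a valid alternative, and you correctly flag the one place where care is needed — that the crystalline Selmer condition is compatible with this exact sequence in the Fontaine--Laffaille range, so that the exactness used actually holds at the level of $H^1_{\Sigma}$. Note the paper's version also quietly relies on the identification $H^1_{\Sigma}(F, M/\varpi^m)\cong H^1_{\Sigma}(F, M\otimes E/\Oo)[\varpi^m]$, so neither route is free of such compatibility checks; the paper's is slightly more economical in that it never needs the full statement $H^1_{\Sigma}(F,M)=0$. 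Two small remarks on your write-up: the Brauer--Nesbitt/Jordan--H\"older digression is not needed once you have identified $L_V$ and $\tilde\rho_{\fp}/L_V$ directly via Assumption~\ref{ass1} applied to the saturated sub and quotient; and the displayed injection should read $H^1_{\Sigma}(F,M)\hookrightarrow H^1_{\Sigma}(F,M\otimes_{\Oo}E)$ rather than $H^1_{\Sigma}(F,M)\otimes_{\Oo}E$ (they are ultimately identified, but the arrow that comes from the long exact sequence is the former).
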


\begin{proof} First note that since $R^{0}_{\tau}$ is a finitely generated $\Oo$-module and since $E$ is assumed to be sufficiently large we may assume that all of the fields $\mF_s$ are equal to $E$. If any of the representations $\rho^{0}_{\tau}\otimes_{R^{0}_{\tau}}\mF$ is reducible write $\rho=\bigoplus_{j=1}^s \rho_j$ for its semi-simplification with each $\rho_j$ irreducible, $j=1,2,\dots, s$. Then by compactness of $G_{\Sigma}$
for each $1\leq j \leq s$ there exists a $G_{\Sigma}$-stable $\Oo$-lattice inside the representation space of $\rho_j$. This implies that $\tr \rho_j(\sigma) \in \Oo$ for all $\sigma \in G_{\Sigma}$ and all $1 \leq j \leq s$.  Hence $\tr \rho$ splits over $\Oo$ into the sum of traces of $\rho_j$. Since $\rho^{0}_{\tau}$ is a deformation of $\tau$ we easily conclude that $\rho=\rho_1 \oplus \rho_2$ with $\rho_j$ (with respect to some lattice) being a deformation of $\tau_j$ ($j=1,2$). Using the fact that $\rho^{0}_{\tau}$ is a deformation of $\tau$ we now deduce that there is an $\Oo$-lattice inside the space of $\rho^{0}_{\tau}\otimes_{R^{0}_{\tau}}\mF$ with respect to which $\rho^{0}_{\tau}\otimes_{R^{0}_{\tau}}\mF$ is block-upper-triangular (with correct dimensions) and non-semi-simple. When we reduce it modulo $\varpi^m$, the upper-right shoulder will give rise to an element of order $\varpi^m$ in $ H^1_{\Sigma}(F, \Hom_{\Oo}(\tilde{\tau}_2, \tilde{\tau}_1)\otimes_{\Oo}E/\Oo)$. Since $m$ is arbitrary this contradicts Assumption \ref{BK}. \end{proof}

\section{The rings $\bfT_{\tau}$} \label{The rings Ttau}
Let us now define the rings $\bfT_{\tau}$ that will correspond to 
$R^{0}_{\tau}$ on the Hecke side.

\begin{prop} \label{genRibet} If $\rho: G_{\Sigma} \rightarrow \GL_n(E)$ is irreducible and satisfies  \be \label{semis} \ov{\rho}^{\rm ss} \cong \tau_1 \oplus \tau_2\ee then there exists a lattice  inside $E^{n}$ so that with respect to that lattice
the mod $\varpi$ reduction $\ov{\rho}$ of $\rho$ has the form $$\ov{\rho}=\bmat \tau_1 & * \\ 0 & \tau_2\emat$$ and is
non-semi-simple. \end{prop}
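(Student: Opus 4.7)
This is a generalization of Ribet's classical lemma to the present setting where the two residual pieces are absolutely irreducible representations $\tau_1,\tau_2$ of arbitrary dimensions with $\tau_1 \not\cong \tau_2$. The plan is to execute the standard lattice-swapping argument (cf.\ \cite{Urban01}).

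By continuity of $\rho$ and compactness of $G_{\Sigma}$, there is a $G_{\Sigma}$-stable $\Oo$-lattice $L \subset E^n$, and its reduction $\ov{L}:=L/\varpi L$ is an $n$-dimensional $\bfF[G_{\Sigma}]$-module with semisimplification $\tau_1 \oplus \tau_2$. Because $\tau_1 \not\cong \tau_2$ are absolutely irreducible, the only $\bfF[G_{\Sigma}]$-submodules of $\ov{L}$ are $0$, a copy of each $\tau_i$ (when present), and $\ov{L}$ itself; hence $\ov{L}$ is either the split module $\tau_1 \oplus \tau_2$ or a non-split extension with one of $\tau_1,\tau_2$ as submodule and the other as quotient.

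The main tool is the lattice-swap. Given a proper nonzero $\bfF[G_{\Sigma}]$-submodule $V \subset \ov{L}$, set $L_V := \pi^{-1}(V)$, where $\pi \colon L \twoheadrightarrow \ov{L}$ is the reduction map. Then $L_V$ is $G_{\Sigma}$-stable with $\varpi L \subset L_V \subset L$, and the filtration $\varpi L_V \subset \varpi L \subset L_V$, combined with $L_V/\varpi L \cong V$ and $\varpi L/\varpi L_V \cong L/L_V \cong \ov{L}/V$ (the latter via multiplication by $\varpi^{-1}$), yields
\[
0 \longrightarrow \ov{L}/V \longrightarrow \ov{L_V} \longrightarrow V \longrightarrow 0,
\]
so that the swap interchanges submodule and quotient in the Jordan--H\"older filtration of the reduction.

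It remains to locate a lattice whose reduction is non-split with $\tau_1$ as submodule. The crucial input here is the irreducibility of $\rho$: if every $G_{\Sigma}$-stable lattice had semisimple reduction $\tau_1 \oplus \tau_2$, then the $\tau_1$-isotypic component (canonical because $\tau_1 \not\cong \tau_2$) would determine a distinguished sublattice for each $L$, and iterating this canonical swap would produce, by a standard compactness argument on the set of $G_{\Sigma}$-stable lattice classes in $E^n$, a proper $G_{\Sigma}$-stable $E$-subspace of $E^n$ of dimension $n_1$, contradicting the irreducibility of $\rho$. Hence some $L$ has $\ov{L}$ non-split; if $\tau_1$ is already its submodule we are done, and otherwise one swap with $V$ equal to the $\tau_2$-submodule of $\ov{L}$ gives $L_V$ fitting in $0 \to \tau_1 \to \ov{L_V} \to \tau_2 \to 0$. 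If this extension is non-split we are done; if it splits, the same irreducibility reduction applied to the subcollection of lattices with $\tau_1$ as submodule of their reduction forces, after finitely many further swaps, a non-split lattice of the desired shape to appear. The main obstacle is making this irreducibility reduction rigorous --- extracting from a hypothetical infinite ``always split'' (or ``always wrong direction'') sequence an honest $G_{\Sigma}$-invariant decomposition of $E^n$ --- but this is the standard Bruhat--Tits compactness argument on the space of lattice classes.
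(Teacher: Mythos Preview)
Your sketch is correct and recovers the classical Ribet lattice-swapping argument. The paper takes a shorter route: it simply observes that the statement is the special case $\mB=\Oo$ of Urban's Theorem~1.1 (stated later as Theorem~\ref{Urban 1.1}), which already delivers a stable lattice $\mL$ sitting in $0 \to \tau_1 \to \mL\otimes_{\Oo}\bfF \to \tau_2 \to 0$ with no quotient isomorphic to $\tau_1$, hence non-split. Urban's construction of $\mL$ differs slightly from your iterative swap---he takes the $\mB$-span of the $\rho$-orbit of a single basis vector---but over a DVR the two mechanisms are essentially the same.

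One comment on your final paragraph: the two-stage reasoning (first find \emph{some} non-split lattice, then fix the direction, then possibly iterate again if it becomes split) is more convoluted than necessary and the ``same irreducibility reduction applied to the subcollection'' is left vague. It is cleaner to run a single descending chain $L_0\supset L_1\supset\cdots$ where $L_{i+1}$ is the preimage of a $\tau_2$-submodule of $\ov{L_i}$ (such a submodule exists unless $\ov{L_i}$ is already non-split with $\tau_1$ as submodule, in which case stop); then each successive quotient $L_i/L_{i+1}$ is isomorphic to $\tau_1$. Finiteness of stable lattice classes (a consequence of irreducibility of $\rho$, since $\Oo[\rho(G_{\Sigma})]$ is an order in $M_n(E)$) forces $L_j=\varpi^m L_i$ for some $j>i$ and $m\geq 1$, and comparing Jordan--H\"older factors of $L_i/L_j$ computed via the chain (only copies of $\tau_1$) versus via $L_i/\varpi^m L_i$ (which contains $\tau_2$) gives an immediate contradiction.
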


\begin{proof} This is a special case of \cite{Urban01}, Theorem 1.1, where the ring $\mB$ in [loc.cit.] is a discrete valuation ring $=\Oo$.
\end{proof}

For each representation $\tau$ as in (\ref{form1}) let $\Phi_{\tau}$ be the set of (inequivalent) 
 characteristic zero deformations of $\tau$, i.e. crystalline at $\fp\mid p$ Galois representations 
$\rho: G_{\Sigma} \rightarrow \GL_n(\Oo)$ whose reduction equals $\tau$. 
Also, let $\Phi_{\tau, E}$ be the set of (inequivalent) crystalline at $\fp \mid p$ Galois representations $\rho: G_{\Sigma} \to \GL_n(E)$ such that there exists a $G_{\Sigma}$-stable lattice $L$ in the space of $\rho$ so that the mod $\varpi$-reduction of $\rho_L$ equals $\tau$.

The following is a higher-dimensional analogue of Lemma 2.13(ii) from \cite{SkinnerWiles99}:
\begin{prop} \label{SW2.13} One has $\Phi_{\tau,E} \cap \Phi_{\tau',E}=\emptyset$ if $\tau \not\cong \tau'$. \end{prop}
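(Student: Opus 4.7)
The plan is to prove the contrapositive: if some $\rho$ lies in $\Phi_{\tau,E} \cap \Phi_{\tau',E}$, then $\tau \cong \tau'$ as $\bfF[G_\Sigma]$-modules. Let $V$ denote the underlying $E$-vector space of $\rho$ and fix $G_\Sigma$-stable lattices $L, L' \subset V$ with $L/\varpi L \cong \tau$ and $L'/\varpi L' \cong \tau'$. Since $L$ and $L'$ are commensurable, replacing $L'$ by $\varpi^k L'$ for a suitable integer $k$ arranges $L' \subseteq L$ and $L' \not\subseteq \varpi L$, without changing the isomorphism class of $L'/\varpi L'$.

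The key step is to analyze the image $\pi(L')$ of $L'$ under the reduction map $\pi \colon L \twoheadrightarrow L/\varpi L \cong \tau$; it is a non-zero $G_\Sigma$-submodule of $\tau$. A direct analysis, using that $\tau$ is a non-split extension $0 \to \tau_1 \to \tau \to \tau_2 \to 0$ with $\tau_1 \not\cong \tau_2$ absolutely irreducible, shows that the only non-zero $G_\Sigma$-submodules of $\tau$ are $\tau_1$ and $\tau$ itself (by intersecting any $U \subseteq \tau$ with $\tau_1$ and projecting to $\tau_2$, and then using non-splitness to exclude the would-be complement of $\tau_1$). Dually, the only non-zero proper $G_\Sigma$-quotient of $\tau$, and likewise of $\tau'$, is isomorphic to $\tau_2$. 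If $\pi(L') = \tau$, then $L' + \varpi L = L$, so $L' = L$ by Nakayama's lemma, whence $\tau' = L'/\varpi L' = L/\varpi L = \tau$.

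It remains to rule out the case $\pi(L') \cong \tau_1$. In that situation the surjection $L' \twoheadrightarrow \pi(L') \cong \tau_1$ has kernel $L' \cap \varpi L$, which contains $\varpi L'$, so it factors through a surjection $\tau' = L'/\varpi L' \twoheadrightarrow \tau_1$. But the classification of quotients above forces any proper non-zero quotient of $\tau'$ to be isomorphic to $\tau_2$, and $\tau_1 \not\cong \tau_2$ by hypothesis, a contradiction. I do not anticipate any serious obstacle: the argument is a higher-dimensional analogue of \cite{SkinnerWiles99} Lemma 2.13(ii), and the only delicate point is the submodule/quotient classification for the general $(n_1,n_2)$ setting, which is a standard consequence of the absolute irreducibility of the $\tau_i$ together with the non-splitness of $\tau$.
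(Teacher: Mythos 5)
Your argument is correct, and it takes a genuinely different — and notably more elementary — route than the paper's. The paper's proof passes through the Bella\"iche--Chenevier machinery of pseudo-representations: it invokes Assumption \ref{BK} and Corollary 7.8 of \cite{BergerKlosin13} to get that $\Oo/I_T$ is finite, uses this together with Proposition 3.1 of \cite{BergerKlosin13} to deduce $\ker T = \ker \rho$, and then adapts Proposition 1.7.4 of \cite{BellaicheChenevierbook} to show that ${\rm Ext}^1_{\bfF[G_\Sigma]/\ker T}(\tau_2,\tau_1)$ is one-dimensional, from which the isomorphism of the two residual representations follows. Your proof instead normalizes the two lattices so that $L' \subseteq L$ with $L' \not\subseteq \varpi L$, classifies the $G_\Sigma$-submodules of $\tau$ (only $0$, $\tau_1$, $\tau$, which is correct for general $n_1,n_2$ given non-splitness and absolute irreducibility of the $\tau_i$), and then rules out $\pi(L')\cong\tau_1$ by comparing quotients of $\tau'$ using $\tau_1\not\cong\tau_2$; when $\pi(L')=\tau$, Nakayama gives $L'=L$. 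The payoffs differ: your argument is self-contained, makes no use of Assumption \ref{BK} (or indeed of any Selmer-group hypothesis), and avoids the detour through generalized matrix algebras; the paper's argument, while heavier, is embedded in the pseudo-representation framework that the rest of the section is built on and reuses the $\ker T=\ker\rho$ fact that is needed elsewhere (e.g.\ in Proposition \ref{prin}). One small point worth making explicit in your write-up is that the quotient $\tau'\twoheadrightarrow\tau_1$ you produce is proper since $\dim\tau'=n>n_1=\dim\tau_1$; with that noted, the case analysis is airtight.
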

\begin{proof} 
Let $\rho:G_{\Sigma} \to {\rm GL}_n(E)$ be a representation such that $\ov{\rho}^{\rm ss}=\tau_1 \oplus \tau_2$ and let $T$ equal its trace.  Suppose there exist two lattices  $L_i$ in the representation space of $\rho$ such that the reductions of the corresponding representations $\rho_{L_i}$ are given by $\tau$ and $\tau'$ with $\tau \not\cong \tau'$ as in  (\ref{form1}). We now consider the classes $c_{L_i}$ of the cocycles corresponding to $\ov{\rho}_{L_i}$ in  ${\rm Ext}^1_{\bfF[G_{\Sigma}]/\ker T}({\tau_2},{\tau_1})$. 
Using Assumption \ref{BK} above and Corollary 7.8 in \cite{BergerKlosin13} we conclude that the quotient $\Oo/I_T$ is finite. 
Thus arguing as in the proof of Proposition 1.7.4 in \cite{BellaicheChenevierbook} but using Proposition 3.1 in \cite{BergerKlosin13} instead of generic irreducibility of $T$ to conclude that $\ker T=\ker \rho$ (see \cite{BellaicheChenevierbook}, Proof of Proposition 1.7.2, on how this equality - which follows from Proposition 1.6.4 in [loc.cit.] in the generically irreducible case - is used) 
we obtain that the existence of $\rho_{L_i}$ with trace $T$ and non-split reduction as in (\ref{form1}) implies that ${\rm Ext}^1_{X}({\tau_2},{\tau_1})$ is 1-dimensional, where $X:= (\Oo[G_{\Sigma}]/\ker T)/\varpi(\Oo[G_{\Sigma}]/\ker T)$.

 First note that $X = \Oo[G_{\Sigma}]/(\varpi \Oo[G_{\Sigma}] + \ker T)$. Secondly one clearly has that $\ker (\Oo[G_{\Sigma}] \twoheadrightarrow \bfF[G_{\Sigma}]) = \varpi \Oo[G_{\Sigma}]$. These two facts imply that the map $\Oo[G_{\Sigma}] \twoheadrightarrow X$ factors through $\Oo[G_{\Sigma}] \twoheadrightarrow \bfF[G_{\Sigma}]$ and that the kernel of the resulting surjection $\bfF[G_{\Sigma}] \twoheadrightarrow X$ equals $(\ker T)\bfF[G_{\Sigma}]$. Thus we have $X=\bfF[G_{\Sigma}]/(\ker T)\bfF[G_{\Sigma}]$, so by the above we conclude that 
${\rm Ext}^1_{\bfF[G_{\Sigma}]/\ker T}({\tau_2},{\tau_1})$ is one-dimensional. 
This means the corresponding representations of $\bfF[G_{\Sigma}]/\ker T$ are isomorphic. Since $\ker T =\ker \rho$ (as noted above)
the reductions both factor through this quotient of $\bfF[G_{\Sigma}]$,  and so they are isomorphic as representation of $\bfF[G_{\Sigma}]$, in contradiction to our assumption.
\end{proof}

The following notation will remain in force throughout the paper.
\begin{notation} \label{notation1} Write $\fT$ for the set of isomorphism classes of residual representations of the form (\ref{form1}). Set $\Phi = \bigcup_{\tau \in \fT} \Phi_{\tau}$. 
\end{notation}
\begin{rem} \label{Phirem} Assumption \ref{finiteness} that $\mP(\tau)$ is finite 
 is equivalent to assuming that the set $\Phi_{\tau}$ is a finite set. 
 \end{rem}

We now fix subsets $\Pi_{\tau} \subset \Phi_{\tau}$ and $\Pi \subset \Phi$ of deformations. In our later application these will be taken to correspond to all the modular deformations corresponding to cuspforms of a particular weight and level which are congruent to a fixed Eisenstein series. In particular $\Pi_{\tau}$ may be empty.

Whenever $\Pi_{\tau} \neq \emptyset$ we obtain an $\Oo$-algebra map $ R_{\tau} \rightarrow \prod_{\rho \in \Pi_{\tau}} \Oo.$  This induces a map  \be \label{defhec} R^{\rm tr}_{\tau} \rightarrow \prod_{\rho \in \Pi_{\tau}} \Oo.\ee
\begin{definition} \label{def hecke} We (suggestively) write
$\bfT_{\tau}$ for the image of the map (\ref{defhec}) - note that this also depends on the choice of the set $\Pi_{\tau}$ - and denote the resulting surjective $\Oo$-algebra map $R^{\rm tr}_{\tau} \twoheadrightarrow \bfT_{\tau}$ by
$\phi_{\tau}$. Also we will write $\bfT$ for the image of $\phi: R^{\rm ps} \to \prod_{\rho \in \Pi} \Oo$, where $\phi$ is induced from the traces of the  deformations $\rho_{\pi}$. Finally we will write $J_{\tau}\subset \bfT_{\tau}$ for the ideal of reducibility of the pseudo-representation $\bfT_{\tau} \otimes_{R^{\rm tr}_{\tau},\phi_{\tau}}  \tr \rho_{\tau}: \bfT_{\tau}[G_{\Sigma}] \to \bfT_{\tau}$ and $J \subset \bfT$ for the ideal of reducibility of the pseudo-representation $T^{\rm ps} \otimes_{R^{\rm ps},\phi} \bfT: \bfT[G_{\Sigma}] \to \bfT$.
\end{definition}

\begin{lemma}\label{factoring}  The maps $R_{\tau}^{\rm tr} \twoheadrightarrow \bfT_{\tau}$ and $R^{\rm ps} \twoheadrightarrow \bfT$ factor through $R^{\rm tr, 0}_{\tau}$ and the image of $R^{\rm ps}$ inside $R^{\rm ps} \otimes_{\Oo} E$ respectively. \end{lemma}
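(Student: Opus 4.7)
The plan is essentially a bookkeeping verification, invoking the universal properties of $R_\tau$ and $R^{\rm ps}$ together with the definitions of $R^0_\tau$, $R^{\rm tr,0}_\tau$ and $\bfT$, $\bfT_\tau$.

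\medskip

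\noindent\textbf{First factorization.} I would start by unpacking what $\Pi_\tau\subset\Phi_\tau$ contributes. Each $\rho\in\Pi_\tau$ is a crystalline characteristic-zero deformation $\rho:G_\Sigma\to\GL_n(\Oo)$ of $\tau$, so by the universal property of $R_\tau$ it induces an $\Oo$-algebra homomorphism $\pi_\rho:R_\tau\to\Oo$. The kernel $\fp_\rho:=\ker\pi_\rho$ satisfies $R_\tau/\fp_\rho=\Oo$, hence $\fp_\rho\in\mP(\tau)$. Thus the composite map $R_\tau\to\prod_{\rho\in\Pi_\tau}\Oo$ factors as
\[
R_\tau\longrightarrow\prod_{\fp\in\mP(\tau)}\Oo\twoheadrightarrow\prod_{\rho\in\Pi_\tau}\Oo,
\]
where the second arrow is projection onto the coordinates indexed by $\{\fp_\rho\}_{\rho\in\Pi_\tau}$. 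Since $R^0_\tau$ is by definition the image of the first arrow, the map $R_\tau\to\prod_{\rho\in\Pi_\tau}\Oo$ factors through $R^0_\tau$. Restricting to $R^{\rm tr}_\tau\subset R_\tau$ and applying Lemma \ref{Rtr0}, which identifies the image of $R^{\rm tr}_\tau$ under $\varphi_\tau$ with $R^{\rm tr,0}_\tau$, gives the desired factorization $R^{\rm tr}_\tau\twoheadrightarrow R^{\rm tr,0}_\tau\twoheadrightarrow\bfT_\tau$.

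\medskip

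\noindent\textbf{Second factorization.} For the pseudo-deformation statement, I would argue via torsion. By definition $\bfT$ is the image of $\phi:R^{\rm ps}\to\prod_{\rho\in\Pi}\Oo$. The target is a torsion-free $\Oo$-module, so every $\Oo$-torsion element of $R^{\rm ps}$ lies in $\ker\phi$. Because $\Oo$ is a discrete valuation ring with uniformizer $\varpi$, the $\Oo$-torsion of $R^{\rm ps}$ coincides with its $\varpi^\infty$-torsion, and this is precisely the kernel of the canonical map $R^{\rm ps}\to R^{\rm ps}\otimes_\Oo E$. Therefore the image of $R^{\rm ps}$ in $R^{\rm ps}\otimes_\Oo E$ is $R^{\rm ps}$ modulo this torsion, and $\phi$ factors through it, as required.

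\medskip

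There is no genuine obstacle here; both claims are formal consequences of the universal properties and the torsion-freeness of the target rings. The only substantive ingredient is Lemma \ref{Rtr0}, which was already established and supplies the matching between the image of $R^{\rm tr}_\tau$ in $R^0_\tau$ and the ring $R^{\rm tr,0}_\tau$.
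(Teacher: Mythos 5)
Your argument is correct and matches the paper's approach: for the first factorization, showing that the kernel of $R_{\tau}\to\prod_{\rho\in\Pi_{\tau}}\Oo$ contains $\bigcap_{\fp\in\mP(\tau)}\fp$ (so that the map factors through $R^0_{\tau}$) and then invoking Lemma \ref{Rtr0} is exactly the paper's proof. The paper's stated proof actually leaves the second factorization implicit, and your torsion observation --- that $\ker\bigl(R^{\rm ps}\to R^{\rm ps}\otimes_{\Oo}E\bigr)$ is the $\Oo$-torsion, which must die in the torsion-free target $\prod_{\rho\in\Pi}\Oo$ --- correctly supplies the omitted step.
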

\begin{proof}  Clearly the kernel of $R_{\tau} \rightarrow \prod_{\rho \in \Pi_{\tau}} \Oo$ contains $\bigcap_{\fp \in \mP(\tau)} \fp$. Thus the map $R_{\tau} \rightarrow \prod_{\rho \in \Pi_{\tau}} \Oo$ factors through $R^0_{\tau}$. Then the claim follows since $\varphi_{\tau}(R_{\tau}^{\rm tr }) = R_{\tau}^{\rm tr, 0}$ by Lemma \ref{Rtr0}.  \end{proof}

\begin{lemma}  \label{cyclicity} The quotient $\bfT_{\tau}/J_{\tau}$ is cyclic and one has $J_{\tau}=\phi_{\tau}(I^{\rm tr}_{\tau})$. \end{lemma}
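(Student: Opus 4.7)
The plan is as follows. The statement has two parts: cyclicity of $\bfT_{\tau}/J_{\tau}$, and the identification $J_{\tau} = \phi_{\tau}(I^{\rm tr}_{\tau})$. Both follow by transporting what is already known about $R_{\tau}^{\rm tr}$ (and $R_{\tau}^{\rm tr,0}$) to $\bfT_{\tau}$ via the surjection $\phi_{\tau}$.

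For cyclicity, I would simply invoke Remark \ref{r2.10}: since $\phi_{\tau}\colon R_{\tau}^{\rm tr} \twoheadrightarrow \bfT_{\tau}$ is a surjection in $\textup{LCN}(E)$ and the pseudo-representation defining $J_{\tau}$ is the push-forward of $\tr\rho_{\tau}^{\rm red}$ along $\phi_{\tau}$, that remark (a consequence of Proposition \ref{tor} and Lemma 7.11 of \cite{BergerKlosin13}) immediately gives that $\bfT_{\tau}/J_{\tau}$ is a cyclic $\Oo$-module.

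For the equality, I would prove the two inclusions separately. The inclusion $\phi_{\tau}(I^{\rm tr}_{\tau}) \subset J_{\tau}$ is a direct application of Lemma 7.11 of \cite{BergerKlosin13}, which states that the image of a reducibility ideal under a surjection is contained in the reducibility ideal of the push-forward. For the reverse inclusion $J_{\tau} \subset \phi_{\tau}(I^{\rm tr}_{\tau})$, I would use the minimality characterization of $J_{\tau}$: by Lemma \ref{l2.6}, inside $R_{\tau}^{\rm tr}/I_{\tau}^{\rm tr}$ we have the decomposition
\[
\tr \rho_{\tau}^{\rm red} \equiv \tr \tilde{\tau}_1 + \tr \tilde{\tau}_2 \pmod{I_{\tau}^{\rm tr}}.
\]
Applying $\phi_{\tau}$ and reducing modulo $\phi_{\tau}(I_{\tau}^{\rm tr})$ yields the same decomposition of the push-forward pseudo-representation $\bfT_{\tau}\otimes_{R_{\tau}^{\rm tr},\phi_{\tau}} \tr\rho_{\tau}$ into $\tr\tilde{\tau}_1 + \tr\tilde{\tau}_2$ (the characters $\tilde{\tau}_i$ themselves do not depend on the ring as their existence and uniqueness come from Assumption \ref{ass1}). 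Hence $\phi_{\tau}(I_{\tau}^{\rm tr})$ is an ideal of $\bfT_{\tau}$ modulo which the pseudo-representation becomes the sum of two pseudo-characters reducing to $\tr\tau_i$; by the minimality clause in the definition of the ideal of reducibility this forces $J_{\tau} \subset \phi_{\tau}(I_{\tau}^{\rm tr})$.

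No step here should present serious difficulty — the content is essentially bookkeeping that the formation of the reducibility ideal is compatible with surjections in both directions once a concrete decomposition is available upstairs. The only subtle point is making sure the two pseudo-characters appearing downstairs really are $\tr\tilde{\tau}_1$ and $\tr\tilde{\tau}_2$ and not some other lifts; this is guaranteed by Assumption \ref{ass1} (via Theorems 7.6 and 7.7 of \cite{BergerKlosin13}, as already used in Lemma \ref{l2.6}), so there is no genuine obstacle.
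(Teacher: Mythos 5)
Your proof is correct and follows essentially the same route as the paper: cyclicity via Remark \ref{r2.10}, the inclusion $\phi_{\tau}(I^{\rm tr}_{\tau})\subset J_{\tau}$ via Lemma 7.11 of \cite{BergerKlosin13}, and the reverse inclusion via the minimality characterization of the reducibility ideal after pushing a decomposition forward along $\phi_{\tau}$. The one small (harmless) deviation is that you use the explicit pseudo-characters $\tr\tilde{\tau}_1,\tr\tilde{\tau}_2$ supplied by Lemma \ref{l2.6} where the paper works with unnamed $\Psi_1,\Psi_2$ and sets $\Psi'_j:=\varphi\circ\Psi_j$; your choice actually sidesteps any well-definedness worry for the pushed-forward pseudo-characters, since $\tr\tilde{\tau}_i$ are $\Oo$-valued and hence live in every $\textup{LCN}(E)$-algebra.
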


\begin{proof} 
The first part is a consequence of Lemma \ref{l2.6} and was already mentioned in Remark \ref{r2.10}. 

By Lemma 7.11 in \cite{BergerKlosin13} we know that  $J_{\tau}\supset \phi_{\tau}(I^{\rm tr}_{\tau})$. For the opposite inclusion we argue as follows. We need to show that  $\phi_{\tau} \circ \tr \rho_{\tau} \equiv \Psi'_1 + \Psi'_2 \mod{\phi_{\tau}(I^{\rm tr}_{\tau})}$ for $\Psi'_1, \Psi'_2$ pseudo-representations.

Put $B=\bfT$, $A=R^{\rm tr}_{\tau}$ and write $\varphi$ for $\phi_{\tau}:R^{\rm tr}_{\tau} \to \bfT$ and $T_B$ for $\bfT_{\tau} \otimes_{R^{\rm tr}_{\tau}, \phi_{\tau}} \tr \rho_{\tau}$.
Let $x\in B[G_{\Sigma}]$. Since $\varphi$ is surjective there exists $y\in A[G_{\Sigma}]$ such that $\varphi(y)=x$. Then by definition of $T_B$ we have $T_B(x) = \varphi \circ T(y) = \varphi (\Psi_1(y) + \Psi_2(y) +i)$ for some pseudo-representations $\Psi_1, \Psi_2$ and $i \in I^{\rm tr}_{\tau}$.  Now set $\Psi'_j(x):= \varphi\circ\Psi_j(y)$ for $j=1,2$.
 \end{proof}

\begin{cor} \label{cyclicity2} One has $J_{\tau}=\phi_{\tau}(I^{\rm tr, 0}_{\tau})$. \end{cor}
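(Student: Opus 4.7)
The plan is to combine Lemma \ref{cyclicity} (which already identifies $J_{\tau}$ with $\phi_{\tau}(I^{\rm tr}_{\tau})$) with the factorization statement of Lemma \ref{factoring} and the equality $\varphi_{\tau}(I^{\rm tr}_{\tau}) = I^{\rm tr,0}_{\tau}$ recorded in the paragraph immediately after Lemma \ref{Rtr0}. In other words, once we know that $\phi_{\tau}$ factors through the surjection $\varphi_{\tau}\colon R^{\rm tr}_{\tau}\twoheadrightarrow R^{\rm tr,0}_{\tau}$, the corollary becomes essentially formal.

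Concretely, I would proceed as follows. First, by Lemma \ref{factoring} there exists an $\Oo$-algebra surjection $\overline{\phi}_{\tau}\colon R^{\rm tr,0}_{\tau}\twoheadrightarrow \bfT_{\tau}$ such that $\phi_{\tau} = \overline{\phi}_{\tau}\circ(\varphi_{\tau}|_{R^{\rm tr}_{\tau}})$. Under this factorization, $\phi_{\tau}$ and $\overline{\phi}_{\tau}$ have the same image when applied to any ideal of $R^{\rm tr}_{\tau}$ after pushing it forward to $R^{\rm tr,0}_{\tau}$. Second, applying this to the ideal $I^{\rm tr}_{\tau}$ yields
\[
\phi_{\tau}(I^{\rm tr}_{\tau}) \;=\; \overline{\phi}_{\tau}\bigl(\varphi_{\tau}(I^{\rm tr}_{\tau})\bigr) \;=\; \overline{\phi}_{\tau}(I^{\rm tr,0}_{\tau}),
\]
where the last equality uses the already-established identity $\varphi_{\tau}(I^{\rm tr}_{\tau}) = I^{\rm tr,0}_{\tau}$. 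Third, combining with Lemma \ref{cyclicity}, which gives $J_{\tau} = \phi_{\tau}(I^{\rm tr}_{\tau})$, we conclude $J_{\tau} = \overline{\phi}_{\tau}(I^{\rm tr,0}_{\tau})$, which is the content of the corollary (with the slight abuse of notation whereby both $\phi_{\tau}$ and $\overline{\phi}_{\tau}$ are denoted $\phi_{\tau}$).

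There is no real obstacle here; the only thing to be careful about is the abuse of notation, namely that the symbol $\phi_{\tau}$ in the statement of Corollary \ref{cyclicity2} refers to the induced map $R^{\rm tr,0}_{\tau}\twoheadrightarrow \bfT_{\tau}$ rather than to the original map from $R^{\rm tr}_{\tau}$. Once this is clarified the proof is a one-line diagram chase.
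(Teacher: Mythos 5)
Your proof is correct and is essentially identical to the paper's own argument: both invoke Lemma \ref{factoring} to factor $\phi_{\tau}$ through $R^{\rm tr,0}_{\tau}$, combine this with the equality $\varphi_{\tau}(I^{\rm tr}_{\tau})=I^{\rm tr,0}_{\tau}$, and conclude using $J_{\tau}=\phi_{\tau}(I^{\rm tr}_{\tau})$ from Lemma \ref{cyclicity}. Your added care in distinguishing $\phi_{\tau}$ from the induced map $\overline{\phi}_{\tau}$ is exactly the abuse of notation the paper flags.
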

\begin{proof} By Lemma \ref{factoring} the map $\phi_{\tau}$ factors through $R_{\tau}^{\rm tr, 0}$. By abuse of notation we will denote the induced map also by $\phi_{\tau}$ as in the statement of the Corollary. Then since $\varphi_{\tau}(I_{\tau}^{\rm tr}) = I_{\tau}^{\rm tr, 0}$ (with $\varphi_{\tau}$ as in section  \ref{r0section}) we get the corollary.  \end{proof}

\begin{lemma} \label{Tcyclic}
 The quotient $\bfT/J$ is cyclic.
\end{lemma}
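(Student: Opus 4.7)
The plan is to replay, in the setting of the full pseudo-deformation ring $R^{\rm ps}$, the same argument used to establish cyclicity of $\bfT_{\tau}/J_{\tau}$ in Lemma \ref{cyclicity}. Consider the pseudo-representation
$T := T^{\rm ps} \otimes_{R^{\rm ps},\phi} \bfT : \bfT[G_{\Sigma}] \to \bfT$, which by Definition \ref{def hecke} has ideal of reducibility $J$, and is a pseudo-deformation of $\tr\tau_1 + \tr\tau_2$.

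First I would show that $T(\Frob_v) \equiv \tr\tilde{\tau}_1(\Frob_v) + \tr\tilde{\tau}_2(\Frob_v) \pmod{J}$ for every $v \notin \Sigma$. By the defining property of the ideal of reducibility, there exist pseudo-characters $T_1,T_2$ on $(\bfT/J)[G_{\Sigma}]$ with $T \equiv T_1 + T_2 \pmod{J}$ and $T_i \equiv \tr\tau_i \pmod{\fm_{\bfT}}$. Assumption \ref{ass1} together with Theorems 7.6 and 7.7 of \cite{BergerKlosin13} (applied exactly as in the proof of Lemma \ref{l2.6}) then force $T_i \equiv \tr\tilde{\tau}_i \pmod{J}$.

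Next I would invoke that $R^{\rm ps}$ is topologically generated as an $\Oo$-algebra by $\{T^{\rm ps}(\Frob_v) : v \notin \Sigma\}$ (as used in the proof of Lemma \ref{2}), so $\bfT = \phi(R^{\rm ps})$ is topologically generated over $\Oo$ by $\{T(\Frob_v)\}$. By the first step, each such generator lies in the image of $\Oo$ in $\bfT/J$, so this image is dense. To promote density to genuine cyclicity I would use that $\bfT$ is a finitely generated $\Oo$-module: indeed $\bfT \hookrightarrow \prod_{\rho\in\Pi}\Oo$, and $\Pi$ is finite because each $\Phi_{\tau}$ is finite by Assumption \ref{finiteness} (see Remark \ref{Phirem}) while $\fT$ itself is finite once Assumption \ref{BK} is used to bound $\dim_{\bfF}H^1_{\Sigma}(F,\Hom(\tau_2,\tau_1))$. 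It follows that $\bfT/J$ is finitely generated over $\Oo$, hence the (dense, $\varpi$-adically closed) image of $\Oo$ therein is all of $\bfT/J$, proving cyclicity.

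I do not anticipate a real obstacle: the proof is a direct transposition of Lemmas \ref{l2.6} and \ref{cyclicity} to the coarser ring $R^{\rm ps}$, replacing the universal deformation trace ring $R_{\tau}^{\rm tr}$ by the universal pseudo-deformation ring. The one bookkeeping point worth spelling out is the finiteness of $\Pi$, which is what allows the passage from topological to literal cyclic generation.
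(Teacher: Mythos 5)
Your proof is correct and follows essentially the same route as the paper's: the paper simply points to Lemma \ref{l2.6} and its corollary, transposing the same identification of $J$ with the closed ideal generated by the trace differences $\{(\phi\circ T^{\rm ps})(\Frob_v) - \tr\tilde{\tau}_1(\Frob_v) - \tr\tilde{\tau}_2(\Frob_v)\}$, with Assumption \ref{ass1} applicable because the $\rho_\pi$ are crystalline. The detour through finiteness of $\Pi$ to pass from density to cyclicity is harmless but unnecessary — compactness of $\Oo$ already makes its image closed in the Hausdorff quotient $\bfT/J$.
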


\begin{proof}
For this we prove as in Lemma \ref{l2.6} that $J$ is equal to the smallest closed ideal of $T$ generated by the set $\{  (\phi \circ T^{\rm ps})(\Frob_v) - \tr \tilde{\tau}_1(\Frob_v) - \tr \tilde{\tau}_2(\Frob_v) \mid v \not\in \Sigma \}.$ We note that Assumption \ref{ass1} can again be applied as Definition  \ref{def hecke} tells us that $\phi$ is induced by the traces of the crystalline deformations $\rho_{\pi}$.
\end{proof}

\section{The lattice $\mL$ and modular extensions}\label{lattice}

We will make a frequent use of the following result that is due to Urban \cite{Urban01}. Let $\mB$ be a Henselian and reduced local commutative algebra that is a finitely generated $\Oo$-module. Since $\Oo$ is assumed to be sufficiently large and $\mB$ is reduced we have $$\mB \subset \hat{\mB} = \prod_{i=1}^s \Oo \subset \prod_{i=1}^s E = \mF_{\mB},$$ where $\hat{\mB}$ stands for the normalization of $\mB$ and $\mF_{\mB}$ for its total ring of fractions. Write $\fm_{\mB}$ for the maximal ideal of $\mB$. For any finitely generated free $\mF_{\mB}$-module $M$, any $\mB$-submodule $N \subset M$ which is finitely generated as a $\mB$-module and has the property that $N \otimes_{\mB} \mF_{\mB} = M$ will be called a \emph{$\mB$-lattice}. 

\begin{thm}[\cite{Urban01} Theorem 1.1] \label{Urban 1.1}
Let $\mR$ be a $\mB$-algebra, and let $\rho$ be an absolutely irreducible representation of $\mR$ 
on $\mF_{\mB}^n$ (i.e., $\rho$ composed with each of the projections $\mF_{\mB}\twoheadrightarrow E$ is absolutely irreducible) such that there exist two representations $\rho_i$ for $i = 1, 2$ in $M_{n_i} (\mB)$ and $I$
a proper ideal of $\mB$ such that \begin{itemize}
\item[(i)] the coefficients of the characteristic polynomial of $\rho$ belong to $\mB$;
\item[(ii)] the characteristic polynomials of $\rho$ and $\rho_1 \oplus \rho_2$ are congruent modulo $I$;
\item[(iii)] $\ov{\rho}_1:=\rho_1$ mod $\fm_{\mB}$ and $\ov{\rho}_2:=\rho_2$ mod $\fm_{\mB}$ are absolutely irreducible;
\item[(iv)] $\ov{\rho}_1 \not\cong \ov{\rho}_2$. \end{itemize}
Then there exist an $\mR$-stable $\mB$-lattice $\mL$ in $\mF_{\mB}^n$ 
and a $\mB$-lattice $\mT$ of $\mF_{\mB}$ such that
we have the following exact sequence of $\mR$-modules:
$$0 \to \rho_1 \otimes_{\mB} \mT/I \mT \to \mL \otimes_{\mB} \mB/I \to \rho_2 \otimes_{\mB} \mB/I \to 0$$ which splits as a sequence of $\mB$-modules.
 Moreover, $\mL$ has no quotient isomorphic
to $\ov{\rho}_1$. \end{thm}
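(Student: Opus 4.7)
The approach is a Ribet-style lattice construction, with careful bookkeeping to accommodate the non-DVR coefficient ring $\mB$. First, I would produce an $\mR$-stable $\mB$-lattice $\mL_0 \subset \mF_{\mB}^n$. Hypothesis (i), that the characteristic polynomial of $\rho$ has coefficients in $\mB$, together with Cayley--Hamilton, implies that $\rho(\mR)$ is contained in a finitely generated sub-$\mB$-algebra of $M_n(\mF_{\mB})$. One then obtains such a lattice by setting $\mL_0 := \sum_{r \in \mR} \rho(r)(M)$ for any finitely generated $\mB$-submodule $M \subset \mF_{\mB}^n$ spanning the total ring of fractions, and checking finite generation via Cayley--Hamilton.

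Next, I would modify $\mL_0$ so that it has $\ov{\rho}_2$ as a quotient but no quotient isomorphic to $\ov{\rho}_1$. The reduction $\ov{\mL}_0 := \mL_0/\fm_{\mB} \mL_0$ is a finite-length $\mR \otimes_{\mB} \bfF$-module whose semi-simplification equals $\ov{\rho}_1 \oplus \ov{\rho}_2$ by (ii) and Brauer--Nesbitt. If $\ov{\mL}_0$ admits a surjection onto $\ov{\rho}_1$, I would replace $\mL_0$ by the kernel in $\mL_0$ of this surjection; this strictly shrinks the lattice. Because $\rho$ is absolutely irreducible over $\mF_{\mB}$, this process must terminate (otherwise one produces an infinite properly descending chain of $\mR$-stable lattices, contradicting the Noetherian hypothesis on the finitely generated $\mB$-module $\mL_0$). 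The output is an $\mR$-stable $\mB$-lattice $\mL$ with no $\ov{\rho}_1$-quotient; hypotheses (iii), (iv) together with the shape of the semi-simplification then force $\ov{\rho}_2$ to occur as a quotient of $\ov{\mL}$.

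With $\mL$ in hand, I would reduce modulo $I$ rather than $\fm_{\mB}$. By (ii), the pseudo-character of $\mR$ on $\mL/I\mL$ equals $\tr \rho_1 + \tr \rho_2 \bmod I$. Combining with the `no-$\ov{\rho}_1$-quotient' property, the largest quotient of $\mL/I\mL$ on which $\mR$ acts by $\rho_2 \bmod I$ is canonically isomorphic to $\rho_2 \otimes_{\mB} \mB/I$. The kernel is annihilated by $I$ and, as an $\mR$-module, carries the character of $\rho_1 \bmod I$; because $\mB/I$ need not be a PID, this kernel is not necessarily $\rho_1 \otimes \mB/I$ but rather $\rho_1 \otimes_{\mB} \mT/I\mT$ for a suitable $\mB$-lattice $\mT \subset \mF_{\mB}$ that records the denominators in the off-diagonal block of a matrix presentation of $\rho$ adapted to $\mL$. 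The $\mB$-linear splitting of the exact sequence is immediate, since $\rho_2 \otimes_{\mB} \mB/I$ is free over $\mB/I$ of rank $n_2$, so $\Ext^1_{\mB/I}(\rho_2 \otimes \mB/I, \rho_1 \otimes \mT/I\mT) = 0$.

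The main obstacle is the appearance and identification of the rank-one twist $\mT$. Over a DVR it collapses to $\mT = \mB$ and one recovers Ribet's original lemma, but for a general reduced Henselian $\mB$ one must track denominators across the iterative lattice modifications; this is the genuinely new content of Urban's theorem, and where hypothesis (iv) (non-isomorphism of the residual blocks) plays an essential role, via Schur's lemma, in preventing mixing between the two isotypic pieces and in producing the canonical splitting of the pseudo-character modulo $I$ into $\tr \rho_1$ and $\tr \rho_2$.
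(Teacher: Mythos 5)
You propose a Ribet-style iterative lattice-shrinking argument, whereas Urban's construction (which the paper only cites, and summarizes immediately after the theorem statement) is fundamentally different: it works directly with the normalization $\hat{\mB}=\prod\Oo$, conjugates each component representation so that its mod-$\varpi$ reduction is block upper-triangular with $\ov{\rho}_1$ and $\ov{\rho}_2$ on the diagonal, proves that the block idempotents $e_1=\bmat I_{n_1}&0\\0&0\emat$ and $e_2=\bmat 0&0\\0&I_{n_2}\emat$ lie in $\rho_{\hat{\mB}}(\mR)$ (this is where the absolute irreducibility and residual-distinctness hypotheses are spent), and then defines $\mL$ in one shot as the $\mB$-submodule of $\hat{\mB}^n$ generated by $\{\rho(r)\,{}^t(0,\dots,0,1):r\in\mR\}$. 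The two pieces $\mL^1=e_1\mL$ and $\mL^2=e_2\mL$ are extracted by the idempotents, and the substance of the theorem is that $\mL^2$ is \emph{free} over $\mB$ (giving $\rho_2\otimes_{\mB}\mB/I$), whereas $\mL^1$ is only isomorphic to $\mT^{n_1}$ for some rank-one lattice $\mT$ (giving the twist). This is a generalized-matrix-algebra construction, not a lattice modification.

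Your termination step has a genuine gap. You argue that an infinite strictly descending chain of $\mR$-stable sub-lattices would contradict the Noetherianity of $\mL_0$, but Noetherian gives only the \emph{ascending} chain condition; submodules of a finitely generated module over a Noetherian ring form unbounded descending chains in general ($\Oo\supset\varpi\Oo\supset\varpi^2\Oo\supset\cdots$ already). Ribet's termination argument relies essentially on the base being a DVR, via finiteness of homothety classes of stable lattices in an irreducible representation; over a reduced local finite $\Oo$-algebra $\mB$ with several minimal primes this finiteness fails, which is exactly why Urban abandons the iterative approach. Separately, your identification of the sub as $\rho_1\otimes_{\mB}\mT/I\mT$ and the quotient as $\rho_2\otimes_{\mB}\mB/I$ free of rank $n_2$ is asserted rather than derived: the asymmetry between the two blocks (one free, one only a lattice) is the nontrivial content, and in Urban's proof it is forced by the specific cyclic generator ${}^t(0,\dots,0,1)$ lying in the $\rho_2$-block, together with the presence of the idempotents $e_1,e_2$ in the image. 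Your $\mB$-linear splitting argument via projectivity of the free quotient is fine, but it presupposes the freeness you have not established.
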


Since we will not only use Theorem \ref{Urban 1.1} itself but also the construction of the lattice $\mL$ let us briefly summarize how $\mL$ is built (for details cf. [loc.cit.], p. 490-491). Let $\rho_i$ be the composition of the representation $\rho$ with the projection $\mB \twoheadrightarrow \Oo$ onto the $i$th component of $\hat{\mB}$. Urban shows that we can always conjugate $\rho_i$ (over $E$) so that the mod $\varpi$ -reduction of (the conjugate of $\rho_i$ which we will from now on denote by) $\rho_i$ has the form \be \label{form} \bmat \ov{\rho}_1 & * \\  0 &\ov{\rho}_2\emat.\ee Set $\rho_{\hat{\mB}}:= (\rho_i)_i$. It is also shown in [loc.cit.] that
the matrices $\bmat I_{n_1} &0 \\ 0&0 \emat$ and $\bmat 0&0\\ 0&I_{n_2} \emat$
are in the image of $\rho_{\hat{\mB}}$. One then defines the lattice $\mL$ to be the $\mB$-submodule of $\hat{\mB}^n$ generated by $\rho(r) {}^t\bmat 0,0, \dots, 0,1\emat$, where $r$ runs over $\mR$ and set $\mL^1:= \bmat I_{n_1} &0 \\ 0&0 \emat \mL$ and $\mL^2:= \bmat 0&0\\ 0&I_{n_2} \emat \mL$.

Let $\fT$ be as in Notation \ref{notation1}. Let $\bfT$, $\Pi \subset \Phi$ and $\Pi_{\tau} \subset \Phi_{\tau}$ be as in section \ref{The rings Ttau}.
Write $\tilde{\bfT}=\prod_{\pi \in \Pi} \Oo$ for the normalization of $\bfT$. Let $\rho$ in Theorem  \ref{Urban 1.1} be $\rho_{\Pi}=\prod_{\tau \in \fT} \prod_{\rho_{\pi} \in \Pi_{\tau}} \rho_{\pi}=\prod_{\pi \in \Pi} \rho_{\pi}$ and $\rho_i=\tilde{\tau}_i$, $i=1,2$,
where $\tilde{\tau}_i: G_{\Sigma} \to \GL_{n_i}(\Oo)$ is a fixed crystalline deformation of $\tau_i$ which we from now on assume exists. (If one works under Assumption \ref{ass1}, then the $\tilde{\tau}_i$'s are unique, but we do not need this uniqueness for the arguments of this section.)
 Note that the reduction of $\rho_{\pi}$ already has the form (\ref{form}), so
we can take $\rho_{\hat{\mB}} = \rho_{\Pi}$ and define lattices $\mL$, $\mL^1$ and $\mL^2$ as above  (with $\mB=\bfT$, $\mR=\bfT[G_{\Sigma}]$).
The $G_{\Sigma}$-action on $\mL$ is then via restriction of $\rho_{\Pi}$ to $\mL$.
Write $\fm=\fm_{\bfT}$ for the maximal ideal of the local ring $\bfT$ and let $J$ as in section \ref{The rings Ttau} be its reducibility ideal.

By Theorem \ref{Urban 1.1} (and Lemmas 1.1 and 1.5(ii) in \cite{Urban01}) there exists a $\bfT$-lattice $\mT$ and a short exact sequence of $\bfT[G_{\Sigma}]$-modules (which splits as a sequence of $\bfT$-modules):
\be \label{Jseq} 0 \to \mL^1\otimes_{\bfT} \bfT/J \to \mL\otimes_{\bfT} \bfT/J\to \mL^2\otimes_{\bfT} \bfT/J \to 0 \ee with
 $$\mL^1\otimes_{\bfT} \bfT/J \cong  \tilde{\tau}_1\otimes_{\Oo}\mT/J\mT \quad \textup{and} \quad \mL^2 \otimes_{\bfT}\bfT/J \cong \tilde{\tau}_2 \otimes_{\Oo}\bfT/J$$ as $\bfT$-modules where the $\bfT$-action on $ \tilde{\tau}_1\otimes_{\Oo}\mT/J\mT$ is via the second factor.

Note that we have the following identification
\be \label{iden2}\Hom_{\Oo}(\tilde{\tau}_2, \tilde{\tau}_1)\otimes_{\Oo}\mT/J\mT \xrightarrow{\sim} \Hom_{\bfT/J}(\mL^2\otimes_{\bfT} \bfT/J, \mL^1\otimes_{\bfT} \bfT/J).\ee
Let $s: \mL^2\otimes_{\bfT} \bfT/J \to \mL\otimes_{\bfT} \bfT/J$ be a section of $\bfT/J$-modules of (\ref{Jseq}). 
Using (\ref{iden2}) as in \cite{Klosin09}, p.159-160, we define a cohomology class $c \in H^1(F_{\Sigma}, \Hom_{\Oo}(\tilde{\tau}_2, \tilde{\tau}_1)\otimes_{\Oo}\mT/J\mT)$ 
by 
$$g \mapsto (\lambda_2 \otimes t \mapsto s(\lambda_2\otimes t) - g \cdot s(g^{-1} \cdot \lambda_2\otimes t)).$$
We also define a map
$$\iota_J: \Hom_{\Oo}(\mT/J\mT, E/\Oo) \to H^1(F_{\Sigma}, \Hom_{\Oo}(\tilde{\tau}_2, \tilde{\tau}_1)\otimes_{\Oo}E/\Oo), \quad f \mapsto (1 \otimes f)(c).$$
 Let us just briefly remark that $\iota_J$ is independent of the choice of the section $s$. 
From now on we will make the following assumption on the quotient $\bfT/J$. 
\begin{assumption} \label{bound on T/J} One has $$\#\bfT/J \geq \#\Oo/L$$ with $L$ as in Assumption \ref{BK}. \end{assumption}
\begin{rem} In Section \ref{section4} we will describe a particular setup for $n=2$ and $F$ an imaginary quadratic field under which  Assumptions \ref{BK} and \ref{bound on T/J} are satisfied. However, we expect that these conditions hold also for other CM fields (for $n=2$), and have therefore presented the results of this and the following sections under these two general assumptions. \end{rem}

\begin{lemma} \label{iota43J} If Assumptions \ref{BK} and \ref{bound on T/J} hold, then the map $$\iota_J: \Hom_{\Oo}(\mT/J\mT, E/\Oo) \to H^1(F_{\Sigma},\Hom_{\Oo}(\tilde{\tau}_2, \tilde{\tau}_1)\otimes_{\Oo}E/\Oo )$$ is injective and its image equals $H^1_{\Sigma}(F, \Hom_{\Oo}(\tilde{\tau}_2, \tilde{\tau}_1)\otimes_{\Oo}E/\Oo))$. \end{lemma}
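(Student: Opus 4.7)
I would prove the lemma in three logically independent steps: (i) $\iota_J$ is injective, (ii) the image of $\iota_J$ lies in the Selmer subgroup $H^1_{\Sigma}$, and (iii) $\iota_J$ surjects onto $H^1_{\Sigma}$. Steps (i) and (ii) are formal consequences of the Urban lattice construction underlying (\ref{Jseq}); the main obstacle is (iii), where Assumptions \ref{BK} and \ref{bound on T/J} enter simultaneously to force equality via a cardinality pinch.

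For (i), suppose $f \in \Hom_{\Oo}(\mT/J\mT, E/\Oo)$ satisfies $\iota_J(f)=0$, i.e.\ the pushout of (\ref{Jseq}) along $1 \otimes f$ splits as a sequence of $G_\Sigma$-modules. Such a splitting yields a non-zero $G_\Sigma$-equivariant quotient of $\mL \otimes_\bfT \bfT/J$ isomorphic to $\tilde\tau_1 \otimes_{\Oo} f(\mT/J\mT)$. Reducing modulo $\fm_{\bfT}$ would then produce a non-zero $G_\Sigma$-equivariant quotient of $\mL/\fm_{\bfT}\mL \cong \tau$ on which $G_\Sigma$ acts through $\ov\tau_1$. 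This contradicts the last conclusion of Theorem \ref{Urban 1.1}: $\mL$ has no quotient isomorphic to $\ov\tau_1$. Equivalently, since $\tau$ is a non-split extension of $\tau_2$ by $\tau_1$ with $\tau_1 \not\cong \tau_2$ absolutely irreducible, its only simple quotient is $\tau_2$, contradicting the existence of such a surjection. Hence $f=0$.

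For (ii), I would show first that $c$ itself lies in $H^1_{\Sigma}(F, \Hom_\Oo(\tilde\tau_2,\tilde\tau_1) \otimes_{\Oo} \mT/J\mT)$, whence $\iota_J(f) \in H^1_{\Sigma}$ follows by functoriality of the crystalline condition in the coefficient module. Unramifiedness outside $\Sigma$ is automatic as the coefficient module is a $G_\Sigma$-module. Crystallinity at each $\fp \mid p$ is inherited by the extension (\ref{Jseq}) from the crystallinity of the middle term $\mL \otimes_\bfT \bfT/J$, which is in turn crystalline because $\mL$ is a $G_\Sigma$-stable $\bfT$-lattice inside the crystalline representation $\rho_\Pi = \prod_{\pi \in \Pi}\rho_\pi$, and Fontaine--Laffaille crystallinity is preserved by subquotients and by tensoring with finite $\Oo$-modules in the allowed weight range.

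For (iii), I would compare cardinalities. Pontryagin duality for the finite $\Oo$-module $\mT/J\mT$ gives $\#\Hom_\Oo(\mT/J\mT, E/\Oo) = \#\mT/J\mT$. Using the structure of $\mT$ as a rank-one $\bfT$-lattice coming from Urban's construction, $\#\mT/J\mT \geq \#\bfT/J$; combined with Assumption \ref{bound on T/J} this yields
\[
\#\Hom_\Oo(\mT/J\mT, E/\Oo) \geq \#\bfT/J \geq \#\Oo/L.
\]
On the other hand, Assumption \ref{BK} bounds the target: $\#H^1_{\Sigma}(F, \Hom_\Oo(\tilde\tau_2,\tilde\tau_1) \otimes_\Oo E/\Oo) \leq \#\Oo/L$. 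Combined with (i) and (ii), the injection $\iota_J$ goes from a group of cardinality $\geq \#\Oo/L$ into a group of cardinality $\leq \#\Oo/L$, so all three cardinalities must equal $\#\Oo/L$ and $\iota_J$ must be an isomorphism onto $H^1_{\Sigma}$, completing the argument.
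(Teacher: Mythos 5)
Your three-step decomposition matches the paper's own proof in structure (injectivity via Urban's ``no quotient isomorphic to $\ov\rho_1$'' conclusion; image in the Selmer group via crystallinity of the lattice $\mL$; surjectivity by a cardinality pinch using Assumptions \ref{BK} and \ref{bound on T/J}). Two points need attention, however.

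First, the claim $\mL/\fm_{\bfT}\mL \cong \tau$ in step (i) is false in general: by Lemma \ref{gene}, $\mL \otimes_{\bfT} \bfF$ is the $\bfF[G_{\Sigma}]$-submodule of $\prod_{\pi\in\Pi}\bfF^n$ generated by $\ov\rho_{\Pi}(r)e_n$, and since $\Pi=\bigsqcup_{\tau'\in\fT}\Pi_{\tau'}$ ranges over \emph{all} residual isomorphism classes, $\mL\otimes_{\bfT}\bfF$ is an extension of $\tau_2$ by $\mT\otimes_{\bfT}\tau_1$, which has $\bfF$-dimension $n_2+n_1\cdot\dim_{\bfF}(\mT\otimes_{\bfT}\bfF)$ and is strictly larger than $\tau$ whenever more than one class is modular. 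The ``Equivalently\dots'' sentence therefore does not work. Fortunately your first formulation is self-contained: the surjection $\mL\otimes_{\bfT}\bfF\twoheadrightarrow\tau_1\otimes_{\Oo}(f(\mT/J\mT)\otimes\bfF)$ (nonzero since $f\neq 0$), composed with $\mL\twoheadrightarrow\mL\otimes_{\bfT}\bfF$ and a projection onto a one-dimensional summand of the second factor, produces a quotient of $\mL$ isomorphic to $\ov\rho_1$, directly contradicting Theorem \ref{Urban 1.1}.

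Second, the inequality $\#\mT/J\mT\geq\#\bfT/J$ in step (iii) is asserted without justification and does not follow merely from $\mT$ being a $\bfT$-lattice; it needs an argument. The paper's route: the $\bfT$-lattice property gives $\Fit_{\bfT}(\mT)=0$ (via Lemma 9.21 of the cited reference); base change of Fitting ideals yields $\Fit_{\bfT/J}(\mT/J\mT)=0$; since $\bfT/J\cong\Oo/\varpi^n$ by Lemma \ref{Tcyclic}, one then deduces $\Fit_{\Oo}(\mT/J\mT)\subset\varpi^n\Oo$, and combining with $\varpi^{\operatorname{length}_{\Oo}(\mT/J\mT)}\Oo\subset\Fit_{\Oo}(\mT/J\mT)$ gives $\operatorname{length}_{\Oo}(\mT/J\mT)\geq n$, i.e.\ the desired cardinality bound. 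Without some version of this Fitting-ideal computation the cardinality pinch in step (iii) is not complete.
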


\begin{proof} For the injectivity of $\iota_J$ and for the fact that its image lands in the Selmer group one follows the strategy in \cite{Berger05}, p.119-120 which was later spelled out in a higher dimensional case in \cite{Klosin09}, Lemmas 9.25 and 9.26. Let us outline the argument here. Let $f \in \ker \iota_J$ and set $K_f:=(\mT/J\mT)/\ker f$, $I_f:=(E/\Oo)/\textup{Im} f$ and $\tilde{T}:=  \Hom_{\Oo}(\tilde{\tau}_2, \tilde{\tau}_1)$. Tensoring the exact sequence $0 \to K_f \xrightarrow{f} E/\Oo \to I_f \to 0$ with $\otimes_{\Oo}\tilde{T}$ we get the exactness of the bottom row of the following commutative diagram:
\be\label{9.13} \xymatrix{&H^1(G_{\Sigma}, \tilde{T}\otimes_{\Oo}\mT/J\mT) \ar[d]^{\phi}\ar[dr]^{H^1(1\otimes f)}\\
H^0(G_{\Sigma}, \tilde{T}\otimes_{\Oo}I_f) \ar[r] & H^1(G_{\Sigma}, \tilde{T}\otimes_{\Oo} K_f) \ar[r]^{H^1(1 \otimes f)} & H^1(G_{\Sigma}, \tilde{T}\otimes_{\Oo}E/\Oo).}\ee Clearly $H^1(1\otimes f)\circ\phi(c)=0$ and since the first term in the bottom row vanishes (as a consequence of absolute irreducibility of $\tilde{\tau}_i$ and the fact that $\tilde{\tau}_1 \not\cong \tilde{\tau}_2$) we get $\phi(c)=0$. Assuming $f \neq 0$, one constructs an $\Oo$-module $A\subset \mT/J\mT$ containing $\ker f$ such that $(\mT/J\mT)/A=\Oo/\varpi$. It is easy to show that $\phi(c)=0$ implies the splitting of the following exact sequence of $\bfT[G_{\Sigma}]$-modules
$$0 \to \tau_1 \to (\mL/J\mL)/(\varpi \mL + \tilde{\tau}_1\otimes_{\Oo} A) \to \tau_2 \to 0$$ contradicting the fact that $\mL$ has no quotient isomorphic to $\tau_1$ (cf. Theorem \ref{Urban 1.1}). This proves injectivity of $\iota_J$. 

On the other hand the fact that $\textup{Im} (\iota_J)$ is contained in the Selmer group can be deduced from the fact that each of the representations $\rho_{\pi}$ making up $\rho = \rho_{\Pi} = \prod_{\pi \in \Pi} \rho_{\pi}$ is crystalline because it implies that the cohomology class $c$ is also crystalline (see \cite{Klosin09}, proof of Lemma 9.25 for more details).

Using \cite{Klosin09}, Lemma 9.21 (which is just a slightly expanded version of Theorem \ref{Urban 1.1}) we get $\textup{Fitt}_{\bfT} \mT = 0$.

By Lemma \ref{Tcyclic} we know that $\bfT/J=\Oo/\varpi^n$ for some $n$. Recall property 4 from the Appendix in \cite{MazurWiles84}:
For an $R$-module $M$ and an ideal $I \subset R$ we have \be\label{MW} \Fit_{R/I} (M/IM) = \Fit_R(M) + I \subset R/I. \ee
Since  $\Fit_{\bfT}(\mT)=0$ this implies that $\Fit_{\Oo/\varpi^n}(\mT/J\mT)=\Fit_{\bfT/J}(\mT/J\mT)=(0)$ in $\bfT/J=\Oo/\varpi^n$.

Note that $\varpi^n$ annihilates $\mT/J\mT$, so using (\ref{MW}) again we get $\Fit_{\Oo/\varpi^n}(\mT/J\mT)= \Fit_{\Oo/\varpi^n}((\mT/J\mT)/\varpi^n) =\Fit_{\Oo}(\mT/J\mT)+\varpi^n\Oo$.

Together this shows that $\Fit_{\Oo}(\mT/J\mT)$ maps to the $0$-ideal in  $\Oo/\varpi^n$, i.e.  $$\Fit_{\Oo}(\mT/J\mT) \subset \varpi^n \Oo=\Fit_{\Oo}(\bfT/J).$$

By property 11 in the Appendix of \cite{MazurWiles84} we know that $\varpi^{{\rm length}_{\Oo}(\mT/J\mT)}\Oo \subset \Fit_{\Oo}(\mT/J\mT)$.
This, combined with Assumption \ref{bound on T/J}  
and Assumption \ref{BK},  implies that $\iota_J$ must in fact surject onto the Selmer group. \end{proof}

Since (\ref{Jseq}) splits as a sequence of $\bfT/J$-modules we can tensor it with $\otimes_{\bfT/J}\bfF$ and obtain an exact sequence of $\bfF[G_{\Sigma}]$-modules 
$$0 \to \mL^1\otimes_{\bfT} \bfF \to \mL\otimes_{\bfT} \bfF\to \mL^2 \otimes_{\bfT} \bfF \to 0 $$ with $$\mL^1\otimes_{\bfT} \bfF  \cong  \mT\otimes_{\bfT} \tau_1 \quad \textup{and} \quad \mL^2 \otimes_{\bfT} \bfF \cong \tau_2.$$
Arguing as above (with $\fm_{\bfT}$ instead of $J$) we again obtain an injective map $$\iota: \Hom_{\Oo}(\mT/J \mT, \bfF) \to H^1(F_{\Sigma},\Hom_{\bfF}(\tau_2, \tau_1)).$$

\begin{lemma} \label{iota43} Suppose that Assumptions \ref{BK} and \ref{bound on T/J} hold. The map $\iota: \Hom_{\Oo}(\mT/J\mT, \bfF) \to H^1(F_{\Sigma},\Hom_{\bfF}(\tau_2, \tau_1))$ is injective and its image equals  $H^1_{\Sigma}(F,\Hom_{\bfF}(\tau_2, \tau_1))$. \end{lemma}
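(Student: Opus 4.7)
The plan is to prove this lemma by mirroring the strategy of Lemma \ref{iota43J}, with the maximal ideal $\fm_\bfT$ playing the role of $J$, and then to deduce surjectivity by a $\varpi$-torsion comparison with Lemma \ref{iota43J} itself (rather than redoing the Fitting-ideal argument, which is awkward with $\bfF$-coefficients because the Selmer group bound from Assumption \ref{BK} is formulated with $E/\Oo$-coefficients).

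For injectivity, I would repeat the argument in the proof of Lemma \ref{iota43J} with $\bfF$ replacing $E/\Oo$: given $f \in \ker\iota$ with $f \neq 0$, form the short exact sequence $0 \to \ker f \to \mT/J\mT \to \bfF \to 0$, tensor it with $\tilde T = \Hom_\Oo(\tilde\tau_2, \tilde\tau_1)$ and apply Galois cohomology. The $H^0$-term vanishes because $\tilde\tau_1 \not\cong \tilde\tau_2$ and both are absolutely irreducible, so the image of the class $c$ (here defined from a $\bfF$-splitting of the $\otimes_{\bfT/J}\bfF$-reduction of (\ref{Jseq})) in $H^1(G_\Sigma, \tilde T\otimes_\Oo (\ker f))$ vanishes. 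Following the cohomology chase as in \cite{Klosin09}, Lemmas 9.25 and 9.26, this produces a $\bfT[G_\Sigma]$-module quotient of $\mL\otimes_{\bfT}\bfF$ isomorphic to $\tau_1$, contradicting the property of $\mL$ in Theorem \ref{Urban 1.1}. The fact that the image of $\iota$ lies in the Selmer subgroup is exactly the crystallinity argument already invoked for $\iota_J$: it comes from the crystallinity of each $\rho_\pi$ making up $\rho_\Pi$.

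For surjectivity, I would use the commutative diagram
\[
\xymatrix{
\Hom_\Oo(\mT/J\mT,\bfF) \ar[r]^-{\iota} \ar@{^{(}->}[d] & H^1_\Sigma(F, \Hom_\bfF(\tau_2,\tau_1)) \ar[d] \\
\Hom_\Oo(\mT/J\mT, E/\Oo) \ar[r]^-{\iota_J}_-{\sim} & H^1_\Sigma(F, \tilde T\otimes_\Oo E/\Oo)
}
\]
where the left vertical inclusion identifies the top group with the $\varpi$-torsion of the bottom, and the right vertical arrow arises from the inclusion $\tilde T/\varpi\tilde T \hookrightarrow \tilde T\otimes_\Oo E/\Oo$ (multiplication by $1/\varpi$). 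From the Kummer-type exact sequence $0 \to \tilde T/\varpi \to \tilde T\otimes E/\Oo \xrightarrow{\varpi} \tilde T\otimes E/\Oo \to 0$ and the vanishing $(\tilde T\otimes E/\Oo)^{G_\Sigma}=0$ (which follows since $(\tilde T/\varpi^n)^{G_\Sigma}=0$ for all $n$ by absolute irreducibility of $\tau_1,\tau_2$ and $\tau_1 \not\cong \tau_2$), one obtains an isomorphism $H^1(G_\Sigma, \Hom_\bfF(\tau_2,\tau_1)) \xrightarrow{\sim} H^1(G_\Sigma, \tilde T\otimes E/\Oo)[\varpi]$. Restricting to the Selmer subgroups (both defined by crystallinity at primes above $p$, preserved under this identification) gives the right vertical arrow as an isomorphism onto the $\varpi$-torsion of the target of $\iota_J$. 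Combining with Lemma \ref{iota43J} then forces $\iota$ to be an isomorphism onto $H^1_\Sigma(F, \Hom_\bfF(\tau_2,\tau_1))$.

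The main technical point to verify is that the diagram above actually commutes, i.e.\ that the cocycle $c$ attached to a $\bfF$-splitting of the reduction mod $\fm_\bfT$ of (\ref{Jseq}) really coincides, under the identification $\Oo/\varpi \hookrightarrow E/\Oo$, with the $\varpi$-torsion part of the cocycle used to construct $\iota_J$. This follows from the functoriality of the lattice construction in Theorem \ref{Urban 1.1}: the $\bfF$-splitting can be obtained by reducing any $\bfT/J$-splitting modulo $\fm_\bfT/J$, and the associated cocycles transform accordingly. The compatibility of the Selmer (crystallinity) conditions along $\tilde T/\varpi \hookrightarrow \tilde T\otimes E/\Oo$ is standard for Fontaine--Laffaille modules in the range we are working (cf.\ \cite{BergerKlosin13} Section 5.2).
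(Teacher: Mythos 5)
Your proposal follows essentially the same route as the paper: injectivity is obtained by mirroring the argument for $\iota_J$, and the identification of the image with the Selmer group proceeds via the same commutative diagram relating $\iota$ and $\iota_J$, reducing the claim to the fact that the $\varpi$-torsion of $H^1_{\Sigma}(F, \Hom_{\Oo}(\tilde{\tau}_2,\tilde{\tau}_1)\otimes_{\Oo}E/\Oo)$ is exactly the image of $H^1_{\Sigma}(F,\Hom_{\bfF}(\tau_2,\tau_1))$ under the right vertical map. Where you re-derive this last fact from the Kummer-type sequence $0 \to \tilde T/\varpi \to \tilde T\otimes E/\Oo \to \tilde T\otimes E/\Oo \to 0$, the $H^0$-vanishing, and Fontaine--Laffaille compatibility of the local conditions, the paper simply cites Proposition 5.8 of \cite{BergerKlosin13}, which records precisely this statement.
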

\begin{proof} We have the following commutative diagram $$\xymatrix{\Hom_{\Oo}(\mT/J\mT, E/\Oo) \ar[r]^-{\iota_J}  &  H^1(F_{\Sigma},\Hom_{\Oo}(\tilde{\tau}_2, \tilde{\tau}_1)\otimes_{\Oo}E/\Oo) \\
\Hom_{\Oo}(\mT/J \mT, \bfF)\ar[u]\ar[r]^{\iota} &  H^1(F_{\Sigma},\Hom_{\bfF}(\tau_2, \tau_1))\ar[u]}$$
Denote the right vertical arrow by $f$. Lemma \ref{iota43J} implies that the image of $f \circ \iota$ is contained in the $\varpi$-torsion of $H^1_{\Sigma}(F,\Hom_{\Oo}(\tilde{\tau}_2, \tilde{\tau}_1)\otimes_{\Oo}E/\Oo)$. 
 Moreover, by Proposition 5.8 in \cite{BergerKlosin13}, we know that the $\varpi$-torsion in $H^1_{\Sigma}(F, \Hom_{\Oo}(\tilde{\tau}_2, \tilde{\tau}_1)\otimes_{\Oo}E/\Oo)$ coincides with $f(H^1_{\Sigma}(F,\Hom_{\bfF}(\tau_2, \tau_1) ))$.
Since $f$ is injective,
 this implies that the image of $\iota$ is contained in the Selmer group. 
 Hence it remains to show that $\iota_J$ is an isomorphism on $\varpi$-torsion. But this is clear since $\iota_J$ is an isomorphism by Lemma \ref{iota43J}. \end{proof}

\begin{lemma} \label{gene} Suppose that Assumptions \ref{BK} and \ref{bound on T/J} hold. Write $\ov{\rho}_{\Pi}$ for $\prod_{\rho_{\pi} \in \Pi}\ov{\rho}_{\pi}$. The $\bfF[G_{\Sigma}]$-module $\mL \otimes_{\bfT} \bfF$ coincides with the $\bfF$-subspace of $\prod_{\rho_{\pi}\in \Pi} \bfF^n$ generated by $\ov{\rho}_{\Pi}(r) e_n$, where $r$ runs over $\bfF[G_{\Sigma}]$,  $e_n$ is a column matrix in $\bfF^n$ whose last entry is 1 and all the other ones are zero. \end{lemma}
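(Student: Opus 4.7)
The plan is to construct a natural $\bfF[G_{\Sigma}]$-equivariant map $\psi \colon \mL \otimes_{\bfT} \bfF \to \prod_{\pi \in \Pi} \bfF^n$ and to show that it is an isomorphism onto the subspace described in the statement. I would proceed in three steps.

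First, construct $\psi$. Since $\mL$ was defined as a $\bfT$-submodule of $\hat{\bfT}^n = \prod_{\pi} \Oo^n$, componentwise reduction modulo $\varpi$ gives a $\bfT$-linear map $\mL \to \prod_{\pi} \bfF^n$. Each projection $\bfT \hookrightarrow \prod_{\pi} \Oo \twoheadrightarrow \Oo$ carries $\fm_{\bfT}$ into $(\varpi)$ (because it lands in $\bfF = \bfT/\fm_{\bfT}$ after further reduction), so $\fm_{\bfT}$ acts by zero on $\prod_{\pi} \bfF^n$, and the map factors through $\psi$.

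Second, I would identify the image of $\psi$. By the construction recalled at the beginning of Section \ref{lattice}, $\mL$ is generated as a $\bfT$-module by $\{\rho_{\Pi}(g) e_n : g \in G_{\Sigma}\}$, hence $\mL = \{\rho_{\Pi}(r) e_n : r \in \bfT[G_{\Sigma}]\}$. Reducing, the image of $\psi$ equals the $\bfF$-span of the elements $\ov{\rho}_{\Pi}(\bar r) e_n$ with $\bar r$ ranging over the image of $\bfT[G_{\Sigma}]$ in $\bfF[G_{\Sigma}]$. This image is all of $\bfF[G_{\Sigma}]$ because $\bfT \twoheadrightarrow \bfF$ is surjective, so the image of $\psi$ is precisely the subspace in the statement.

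The main obstacle is to prove that $\psi$ is injective. A useful preliminary observation is that $\fm_{\bfT} \hat{\bfT} = \varpi \hat{\bfT}$: the inclusion $\subseteq$ follows from the projections $\bfT \to \Oo$ sending $\fm_{\bfT}$ into $(\varpi)$, while the reverse inclusion is immediate because $\varpi \in \fm_{\bfT}$. Consequently the natural map $\hat{\bfT}^n \otimes_{\bfT} \bfF \to \prod_{\pi} \bfF^n$ is an isomorphism, and injectivity of $\psi$ reduces to injectivity of $\mL \otimes_{\bfT} \bfF \to \hat{\bfT}^n \otimes_{\bfT} \bfF$. For this I would apply Theorem \ref{Urban 1.1} with the ideal $\fm_{\bfT}$ in place of $J$, yielding the split short exact sequence
$$0 \to \mL^1 \otimes_{\bfT} \bfF \to \mL \otimes_{\bfT} \bfF \to \mL^2 \otimes_{\bfT} \bfF \to 0,$$
with $\mL^2 \otimes_{\bfT} \bfF \cong \tau_2$ and $\mL^1 \otimes_{\bfT} \bfF \cong \tilde{\tau}_1 \otimes_{\Oo} (\mT/\fm_{\bfT}\mT)$, and compare it with the evident filtration of the image subspace into its lower-block and upper-block parts. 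Injectivity on the lower block is clear, since $\tau_2$ (the common lower-diagonal block of every $\ov{\rho}_{\pi}$) is absolutely irreducible and embeds diagonally; injectivity on the upper block follows from the idempotent description $\mL^1 = \bmat I_{n_1} & 0 \\ 0 & 0\emat \mL$ together with the absolute irreducibility of $\tau_1$ and the injection $\mT/\fm_{\bfT}\mT \hookrightarrow \hat{\bfT}/\fm_{\bfT}\hat{\bfT} = \prod_{\pi} \bfF$. The five-lemma then delivers injectivity of $\psi$ and completes the identification.
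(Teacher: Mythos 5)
Your identification of the image of the reduction map is essentially the paper's argument, packaged more carefully: the paper also simply moves the $\bfT$-coefficients across the tensor sign and then checks that each generator $\rho_{\Pi}(g)e_n\otimes 1$ reduces to $\ov\rho_{\Pi}(g)e_n$, by splitting $\rho_{\Pi}(g)$ into the diagonal (scalar) lift of $\ov\rho_{\Pi}(g)$ plus a $\varpi\hat\bfT$-valued error term. So on the image/surjectivity side you are doing the same thing. The genuine divergence is that you explicitly separate out the injectivity of the natural map $\psi\colon \mL\otimes_{\bfT}\bfF\to\prod_{\pi}\bfF^n$ as something to be proved. You are right to flag it: the paper's manipulation $\bigl(\begin{smallmatrix}a'_{12}/\varpi\\ a'_{22}/\varpi\end{smallmatrix}\bigr)\otimes\varpi=0$ is only legitimate inside $\hat\bfT^n\otimes_{\bfT}\bfF$, since $\bigl(\begin{smallmatrix}a'_{12}/\varpi\\ a'_{22}/\varpi\end{smallmatrix}\bigr)$ has no reason to lie in $\mL$, so the paper's proof in effect only identifies the \emph{image} of $\mL\otimes_{\bfT}\bfF$ in $\prod_{\pi}\bfF^n$; yet the lemma is later used (e.g.\ in Theorem \ref{deformtoRtr}) to read off the $\bfF$-dimension of $\mL\otimes_{\bfT}\bfF$ itself, which requires injectivity.

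However, your injectivity argument does not close this. The key step is the unproven assertion that $\mT/\fm_{\bfT}\mT\hookrightarrow\hat\bfT/\fm_{\bfT}\hat\bfT$. This is false for a general $\bfT$-lattice $\mT$: take $\bfT=\{(a,b)\in\Oo^2: a\equiv b\bmod\varpi\}$, so $\hat\bfT=\Oo^2$ and $\fm_{\bfT}=\varpi\Oo\times\varpi\Oo$, and take $\mT=\Oo\times\varpi\Oo$. Then $\fm_{\bfT}\mT=\varpi\Oo\times\varpi^2\Oo$, so $\mT/\fm_{\bfT}\mT\cong\bfF^2$, while the image of $\mT$ in $\hat\bfT/\varpi\hat\bfT=\bfF^2$ is $\bfF\times 0$, one-dimensional; the map has a kernel. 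So your five-lemma step would need to invoke some special feature of Urban's lattice $\mT$ (or of $\mL^1$, e.g.\ that the generator $v$ of $\mT$ produced by the construction is a unit of $\hat\bfT$), which you do not supply. In short: same computation of the image as the paper, plus a correct recognition of a subtlety the paper elides, but the proposed fix for that subtlety has a gap at exactly the point where one would need new input.
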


\begin{proof}
By definition of $\mL$, every element of $\mL\otimes_{\bfT} \bfF$ can be written as $\sum_i t_i \rho_{\Pi}(g_i)e_n \otimes a_i$ with $t_i \in \bfT$, $a_i \in \bfF$ and $g_i \in G_{\Sigma}$. 
Writing $\ov{t}_i$ for the image of $t_i$ under the canonical map $\bfT \twoheadrightarrow \bfF$ we can re-write the above sum as $\sum_i \rho_{\Pi}(g_i)e_n \otimes a_i\ov{t}_i$ and $a_i\ov{t}_i \in \bfF$. It suffices to show now that for every $g\in G_{\Sigma}$ we get $\rho_{\Pi}(g) e_n \otimes 1 = \ov{\rho}_{\Pi}(g)e_n\otimes 1.$
Write $$\rho_{\Pi}(g) = \bmat a_{11}(g) + a'_{11}(g) & a_{12}(g) + a'_{12}(g)\\
a'_{21}(g) & a_{22}(g) + a'_{22}(g)\emat,$$ where $a_{11}, a'_{11}$ are $(n-1)\times (n-1)$-matrices, $a_{22}, a'_{22}$ are scalars and the other matrices have sizes determined by these two and the entries of $a'_{ij}(g)$ lie in  $\varpi \Oo \oplus \varpi \Oo \oplus \dots \oplus \varpi\Oo$ 
Thus, $$\rho_{\Pi}(g) e_n \otimes 1 =  \bmat a_{12}(g) + a'_{12}(g) \\ a_{22}(g) + a'_{22}(g)\emat \otimes 1 =  \bmat a_{12}(g)  \\ a_{22}(g) \emat \otimes 1 + \bmat a'_{12}(g)/\varpi  \\ a'_{22}(g)/\varpi \emat \otimes \varpi,$$ and the latter tensor is zero. This proves the lemma.
\end{proof}

Let us now turn to the 2-dimensional situation, where every $\tau$ is 
 (up to a twist) of the form $$\tau = \bmat 1 & * \\ 0 & \chi \emat$$ for a Galois character $\chi$. Note that $\Hom_{\Oo}(\mT/J\mT, \bfF) = \Hom_{\Oo}(\mT\otimes_{\bfT}\bfT/J, \bfF) =\Hom_{\Oo}(\mT\otimes_{\bfT}\bfT/\fm, \bfF)= \Hom_{\Oo}(\mT\otimes_{\bfT} \bfF, \bfF).$
\begin{prop} \label{modext1} Suppose that Assumptions \ref{BK} and \ref{bound on T/J} hold. The image of $\iota : \Hom(\mT\otimes_{\bfT} \bfF, \bfF) \hookrightarrow H^1_{\Sigma}(F, \chi^{-1})$ is spanned by extensions $\tau$ such that $\Pi_{\tau} \neq \emptyset$. \end{prop}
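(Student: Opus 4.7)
By Lemma~\ref{iota43}, $\iota$ is an isomorphism onto $H^1_\Sigma(F,\chi^{-1})$, so the content of the proposition is that every class in this Selmer group is an $\bfF$-linear combination of classes corresponding to residual representations $\tau_\pi := \ov{\rho}_\pi$ for $\pi\in\Pi$ (noting that $\pi\in\Pi_{\tau_\pi}$, so such $\tau_\pi$ have $\Pi_{\tau_\pi}\neq\emptyset$). The strategy is to exhibit, for each $\pi\in\Pi$, a natural functional $f_\pi\in\Hom(\mT\otimes_\bfT\bfF,\bfF)$, to show that $\iota(f_\pi)$ is (a scalar multiple of) the extension class of $\tau_\pi$, and then to verify that $\{f_\pi\}_{\pi\in\Pi}$ spans $\Hom(\mT\otimes_\bfT\bfF,\bfF)$.

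\textbf{Construction of $f_\pi$.} Since $\mL\subseteq\hat\bfT^n=\prod_{\pi\in\Pi}\Oo^n$, projection onto the $\pi$-th factor followed by reduction mod $\varpi$ yields a $\bfT$-linear map $\mL\to\bfF^n$ which, because $\fm_\bfT$ maps into $\varpi\Oo$ under each component $\bfT\to\Oo$ (their composition with $\Oo\twoheadrightarrow\bfF$ equals the canonical $\bfT\twoheadrightarrow\bfF$), factors through $\mL\otimes_\bfT\bfF$. Restricting to $\mL^1\otimes_\bfT\bfF\cong \mT\otimes_\bfT\bfF$ and using that elements of $\mL^1$ have their last $n_2=1$ coordinate equal to $0$, I get an $\bfF$-linear map $f_\pi\colon\mT\otimes_\bfT\bfF\to\bfF$ (the first coordinate of the $\pi$-th factor).

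\textbf{Identifying $\iota(f_\pi)$ with the class of $\tau_\pi$.} Push the short exact sequence $0\to\mT\otimes_\bfT\bfF\to\mL\otimes_\bfT\bfF\to\chi\to 0$ forward along $f_\pi$. The $\pi$-th projection $\mL\otimes_\bfT\bfF\to\bfF^n$ restricts to $f_\pi$ on the subobject $\mT\otimes_\bfT\bfF$ and hence, by the universal property of the pushout, factors through the pushforward and fits in a commutative diagram of extensions of $\chi$ by $\bfF$. The image of $\mL\otimes_\bfT\bfF$ in $\bfF^n$ contains $\ov\rho_\pi(g)e_n$ for all $g$, and since $\tau_\pi$ is non-split there exists $g$ with the upper-right entry $\ast_\pi(g)\neq 0$, so this image is all of $\bfF^n\cong\tau_\pi$; the five lemma then forces the induced map on pushforwards to be an isomorphism. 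Thus $\iota(f_\pi)$ is precisely the extension class of $\tau_\pi$.

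\textbf{Spanning.} It suffices to show $\bigcap_{\pi\in\Pi}\ker f_\pi=0$ in $\mT\otimes_\bfT\bfF$, for then by finite-dimensional duality $\{f_\pi\}$ spans $\Hom(\mT\otimes_\bfT\bfF,\bfF)$. The splitting of the sequence $0\to\mL^1\otimes\bfT/J\to\mL\otimes\bfT/J\to\mL^2\otimes\bfT/J\to 0$ as $\bfT/J$-modules survives tensoring with $\bfF$, so $\mT\otimes_\bfT\bfF\hookrightarrow\mL\otimes_\bfT\bfF$. By Lemma~\ref{gene}, the natural map $\mL\otimes_\bfT\bfF\hookrightarrow\prod_\pi\bfF^n$ is injective, and under the composition the subspace $\mT\otimes_\bfT\bfF$ lands in the product of first coordinates, coordinate-wise equal to $(f_\pi)_{\pi\in\Pi}$. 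Hence an element in $\bigcap_\pi\ker f_\pi$ maps to zero in $\prod_\pi\bfF^n$ and is therefore zero. Writing an arbitrary $f\in\Hom(\mT\otimes_\bfT\bfF,\bfF)$ as $\sum c_\pi f_\pi$ gives $\iota(f)=\sum c_\pi[\tau_\pi]$, establishing the proposition. The main technical point will be the careful bookkeeping in the pushforward step, ensuring that the functional $f_\pi$ obtained from the $\pi$-th projection of the lattice $\mL$ really computes the extension class of $\tau_\pi$ via Urban's cocycle construction.
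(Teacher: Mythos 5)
Your proof is correct and follows essentially the same route as the paper: use Lemma \ref{gene} to realize $\mL\otimes_{\bfT}\bfF$ concretely inside $\prod_{\pi\in\Pi}\bfF^2$, define coordinate functionals on $\mT\otimes_{\bfT}\bfF$ indexed by $\Pi$, observe that these span the dual (because $\mT\otimes_{\bfT}\bfF$ injects into the product of first coordinates), and identify $\iota$ of each coordinate functional with the class of a modular extension. The only genuine variation is that you package the identification of $\iota(f_\pi)$ with $[\tau_\pi]$ abstractly via the pushout and five lemma, whereas the paper unwinds the cocycle explicitly (writing out the vectors $x$ and reading off that $\iota(\phi_j)$ equals the cocycle $\alpha_{n(j),m(j)}f_{n(j)}$). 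Both are fine; the paper's explicit version has the small advantage of making visible that distinct $\pi$ belonging to the same residual isomorphism class give scalar multiples of the same cohomology class. On your flagged ``bookkeeping'' concern: it is not a real gap, since $\iota$ is defined precisely by applying $1\otimes f$ to the cocycle $c$ of the extension, and that is exactly pushing the extension forward along $f$, so the pushout you construct is $\iota(f_\pi)$ by definition; the only thing to check is that your $f_\pi$ agrees with the first coordinate of the $\pi$-th projection under the identification $\mL^1\otimes_{\bfT}\bfF\cong\mT\otimes_{\bfT}\bfF$, which follows from the construction of $\mL^1$ as $\bigl[\begin{smallmatrix}I_{n_1}&0\\0&0\end{smallmatrix}\bigr]\mL$.
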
 

\begin{proof} Let $\Pi$ be as above and $\ov{\Pi}$ be a subset $\Pi$ consisting of representatives of distinct isomorphism classes of residual representations (i.e., one element from every non-empty $\Pi_{\tau}$).
 By Lemma \ref{gene} the lattice $\mL \otimes_{\bfT} \bfF$ is generated by vectors $$x=\bmat (1,1, \dots, 1) & \chi(g)\alpha(g)\\ (0,0, \dots, 0)
& (\chi(g), \chi(g), \dots, \chi(g))\emat \bmat 0 \\ 1 \emat.$$ Let us explain the notation: There are $r$ elements in $\ov{\Pi}$ (which we will denote by $\tau^1, \tau^2, \dots, \tau^r$), $\sigma:=\sum_{i=1}^r s_i$ elements in $\Pi$. Moreover, $\alpha$ is a $\sigma$-tuple of functions such that  $\alpha(g)$ equals  $$(\alpha_{1,1}f_1(g), \dots, \alpha_{1,s_1}f_1(g),\alpha_{2,1} f_2(g), \dots, \alpha_{2,s_2}f_2(g), \dots, \alpha_{r,1}f_r(g), \dots \alpha_{r,s_r}f_r(g))\in \bfF^\sigma,$$ where the $\alpha_{i,j}$ are elements of $\bfF^{\times}$.
We get that $x$ equals $$ \chi(g) \bmat (\alpha_{1,1}f_1(g), \dots, \alpha_{1,s_1}f_1(g), \alpha_{2,1}f_2(g), \dots,\alpha_{2,s_2} f_2(g), \dots, \alpha_{r,1}f_r(g), \dots\alpha_{r,s_r} f_r(g)) \\ (1, 1, \dots, 1)\emat.$$ Let $\alpha^j$ be the $j$th entry of $\alpha$. Then we conclude that  $\mL\otimes_{\bfT} \bfF \cong \bfT \otimes_{\bfT} V = V$, where $V$ is the 
subspace of $(\bfF \oplus \bfF)^{\# \Pi}$ spanned over $\bfF$ by the set 
vectors of the form 
$$\left(\bmat \chi(g)\alpha^1(g)\\ \chi(g) \emat , \bmat \chi(g)\alpha^{2}(g)\\ \chi(g) \emat, \dots, \bmat \chi(g)\alpha^{\sigma}(g)\\ \chi(g) \emat\right).$$ For $j \in \{1,2, \dots, \sigma\}$ define integers $n(j)\in \{1,2, \dots, r\} $ and $m(j)\in \{1, 2, \dots, s_{n(j)}\}$ by the equality $$\alpha^j(g) = \alpha_{n(j), m(j)} f_{n(j)}(g).$$ The $G_{\Sigma}$-action on $V$ is via $\ov{\rho}_{\Pi}$, hence $h \in G_{\Sigma}$ acts on $\bmat \chi(g)\alpha^j(g)\\ \chi(g) \emat$ via the $n(j)$th residual representation in $\ov{\Pi}$, i.e., by multiplication by $\tau^{n(j)}(h)$. In particular all the vectors in $V$ have the form \be \label{form0} v=\left(\bmat a_1\\ a \emat , \bmat a_2\\ a \emat, \dots, \bmat a_{\sigma}\\ a \emat\right).\ee

By definition we have \be \label{form2} \mL^2\otimes_{\bfT} \bfF = \bmat 0&0 \\ 0&1 \emat \mL\otimes_{\bfT} \bfF = \bmat 0&0 \\ 0&1 \emat V \cong \bfF(\chi)\ee as $\bfF[G_{\Sigma}]$-modules, where we write $\bfF(\chi)$ for the one-dimensional $\bfF$-vector space on which $G_{\Sigma}$ acts via $\chi$. The surjective $\bfF[G_{\Sigma}]$-module map $V \twoheadrightarrow \bfF(\chi)$ is given by sending a vector $v$ as in (\ref{form0}) to $a$. Write $V'$ for the kernel of this map. Identifying $V'$ with $\mL^1 \otimes_{\bfT} \bfF = \bmat 1&0 \\ 0&0 \emat \mL\otimes_{\bfT} \bfF = \bmat 1&0 \\ 0&0 \emat V$ provides us with a splitting (only as $\bfF$-vector spaces) of the short exact sequence  of $\bfF[G_{\Sigma}]$-modules $$0 \to V' \to V \to \bfF(\chi) \to 0.$$ Since the $G_{\Sigma}$-action on $\mL^1\otimes_{\bfT} \bfF$ is trivial, we have $V'=\mL^1\otimes_{\bfT} \bfF = \mT \otimes_{\bfT} \bfF$. Clearly, we may assume that the vectors in $V'$ all have the form \be \label{form11} v_0=\left(\bmat a_1\\ 0 \emat , \bmat a_2\\ 0 \emat, \dots, \bmat a_{\sigma}\\ 0 \emat\right).\ee Let $\phi_j \in \Hom_{\bfF} (V', \bfF)=\Hom(\mT\otimes_{\bfT} \bfF, \bfF)$ be the homomorphism sending $v_0$ as in (\ref{form11}) to $a_j$.

Then the map $\iota$ sends $\phi_j$ to the cocycle $\alpha_{n(j), m(j)}f_{n(j)}$, i.e, to the residual representation $\bmat 1 & \chi\alpha_{n(j), m(j)}f_{n(j)} \\ 0 & \chi \emat$, which is isomorphic to the residual representation of the $n(j)$th element of $\ov{\Pi}$.
 So, this proves that the image of $\iota$ is spanned by modular extensions.  \end{proof}

\begin{cor} \label{surj1prime} If Assumptions \ref{BK} and \ref{bound on T/J} are satisfied then the space $H^1_{\Sigma}(F, \chi^{-1})$ has a basis consisting of extensions $\tau$ such that $\Pi_{\tau} \neq \emptyset$.
\end{cor}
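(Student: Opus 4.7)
The proof should be essentially a direct combination of the two preceding results. The plan is as follows.

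First, I would invoke Lemma \ref{iota43}, which (under Assumptions \ref{BK} and \ref{bound on T/J}) gives that the map
\[
\iota: \Hom_{\Oo}(\mT/J\mT, \bfF) \hookrightarrow H^1(F_{\Sigma}, \Hom_{\bfF}(\tau_2,\tau_1))
\]
is injective with image exactly the crystalline Selmer group $H^1_{\Sigma}(F,\Hom_{\bfF}(\tau_2,\tau_1))$. Specializing to the 2-dimensional setup with $\tau_1=1$ and $\tau_2=\chi$, we have $\Hom_{\bfF}(\tau_2,\tau_1)\cong\bfF(\chi^{-1})$, so the image of $\iota$ is precisely $H^1_{\Sigma}(F,\chi^{-1})$.

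Second, I would apply Proposition \ref{modext1}, which tells us that the image of $\iota$ is $\bfF$-spanned by those cohomology classes which correspond (via $\tau\leftrightarrow\bmat 1 & *\\ 0 & \chi\emat$) to residual representations $\tau$ with $\Pi_{\tau}\ne\emptyset$. Combining this with the surjectivity from Lemma \ref{iota43}, we conclude that $H^1_{\Sigma}(F,\chi^{-1})$ itself is spanned by modular extensions.

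Finally, from any spanning set of a finite-dimensional $\bfF$-vector space one can extract a basis, and this basis consists (by construction) of extensions of the required form. There is no real obstacle here; the two previous results have done the work, and this corollary is just the packaging of their combination, with the only subtle point being the identification $\Hom_{\bfF}(\tau_2,\tau_1)=\bfF(\chi^{-1})$ in the 2-dimensional case to match the formulation of Proposition \ref{modext1}.
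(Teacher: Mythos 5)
Your proof is correct and matches the paper's approach exactly: the paper's own proof of this corollary is the one-line combination of Lemma \ref{iota43} (image of $\iota$ equals the Selmer group) and Proposition \ref{modext1} (image of $\iota$ is spanned by modular extensions), followed by extracting a basis from a spanning set. The identification $\Hom_{\bfF}(\tau_2,\tau_1)\cong\bfF(\chi^{-1})$ that you flag is indeed the only bit of bookkeeping needed, and the paper handles it in the paragraph just before Proposition \ref{modext1}.
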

\begin{proof} This follows from Lemma \ref{iota43} and Proposition \ref{modext1}.  \end{proof}

\begin{rem} Corollary \ref{surj1prime} does not imply that $\Pi_{\tau}\neq
\emptyset$ for all isomorphism classes $\tau \in \fT$. In fact, if we
replace $\bfF$ by its finite extension $\bfF'$ of degree $m$, then the order
of $\fT$ increases (since it is given by $\#H^1_{\Sigma}(F, \Hom(\tau_2,
\tau_1))/(q-1)$, where $q$ is the order of the residue field), while the
number of modular forms, i.e., $\sum_{\tau \in \fT} \# \Pi_{\tau}$ remains
the same. \end{rem}

\section{Bounding the size of $\prod_i\bfT_i/\phi(I_i)$} \label{Tsection}
In this section we keep in force Assumptions \ref{BK} and \ref{bound on T/J}. Moreover, we work in the two-dimensional setup and we set $\tau_1=1$ and $\tau_2=\chi$ (which can always be achieved by twisting by a Galois character). 
Let  $\mB:=\{e_1, \dots, e_s\}$ be a basis of $H^1_{\Sigma}(F, \Hom(\tau_2, \tau_1))=H^1_{\Sigma}(F, \chi^{-1})$ consisting of `modular' extensions, i.e., extensions $\tau$ such that $\Pi_{\tau}\neq \emptyset$ (cf. Corollary \ref{surj1prime}) and write $\tau^i$ for the corresponding residual representations. 
Let us write $\bfT_i$ for 
$\bfT_{\tau^i}$.  Similarly let us write $J_i$ and $\Pi_i$ for $J_{\tau^i}$ and $\Pi_{\tau^i}$ respectively.
Write $p_i: \bfT \twoheadrightarrow \bfT_i$ for the canonical projection. Consider the map $\bfT \rightarrow \prod_{i=1}^s \bfT_i.$ 
Let $J\subset \bfT$ be as in section \ref{The rings Ttau}.
 Set $J_i = p_i(J)$. Note that $J_i$ is an ideal because $p_i$ is surjective.

Let us begin with an observation that there is no reason to expect that 
 the canonical map $$\bfT/J \to \prod_{i=1}^s \bfT_i/J_i$$ should  in general be injective or surjective. In fact 
Lemma \ref{Tcyclic} shows that $\bfT/J$ is cyclic over $\Oo$. 
However, as we shall see below the orders of both sides are equal provided that the basis $\mB$ is unique up to scaling and that all of the ideals $J_i$ are principal.

\begin{prop} \label{l22} $\# \prod_{i=1}^s \bfT_i/J_i \leq \# \bfT/J$. \end{prop}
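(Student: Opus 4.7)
The plan is to mimic the lattice construction of Section \ref{lattice} for each $\bfT_i$ individually, combine the resulting cohomological maps into a single injection into the Selmer group, and then appeal to Assumptions \ref{BK} and \ref{bound on T/J} for the final size comparison.

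First, for each $i\in\{1,\ldots,s\}$, I would apply Urban's theorem (Theorem \ref{Urban 1.1}) to $\rho^{(i)}:=\prod_{\pi\in\Pi_i}\rho_\pi$ with $\mB=\bfT_i$, $\rho_1=\tilde\tau_1$, $\rho_2=\tilde\tau_2$, $I=J_i$. This yields a $\bfT_i[G_\Sigma]$-stable $\bfT_i$-lattice $\mL_i$ and a $\bfT_i$-lattice $\mT_i\subset\mF_{\bfT_i}$ fitting into a short exact sequence of $\bfT_i[G_\Sigma]$-modules
\[
0\to\tilde\tau_1\otimes_\Oo\mT_i/J_i\mT_i\to\mL_i\otimes_{\bfT_i}\bfT_i/J_i\to\tilde\tau_2\otimes_\Oo\bfT_i/J_i\to 0,
\]
splitting as a sequence of $\bfT_i/J_i$-modules. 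Mimicking the construction preceding Lemma \ref{iota43J}, this produces an $\Oo$-linear map
\[
\iota_{J_i,i}\colon\Hom_\Oo(\mT_i/J_i\mT_i,E/\Oo)\longrightarrow H^1_\Sigma(F,\Hom_\Oo(\tilde\tau_2,\tilde\tau_1)\otimes_\Oo E/\Oo).
\]
The no-quotient property of $\mL_i$, together with the injectivity part of the proof of Lemma \ref{iota43J} (which does \emph{not} invoke Assumption \ref{bound on T/J}), shows that $\iota_{J_i,i}$ is injective.

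The main technical step is to show that the combined map $f:=\bigoplus_i\iota_{J_i,i}$ on $\bigoplus_i\Hom_\Oo(\mT_i/J_i\mT_i,E/\Oo)$ is itself injective. Since both source and target are finite $p$-primary $\Oo$-modules, it suffices to verify injectivity on $\varpi$-torsion: if $x$ lies in the kernel and has order $\varpi^k$ with $k\ge 1$, then $\varpi^{k-1}x\neq 0$ lies in the $\varpi$-torsion of the source and maps to $0$. Using $\fm_{\bfT_i}=J_i+\varpi\bfT_i$ (justified as in the preamble to Lemma \ref{iota43}) one identifies $\Hom_\Oo(\mT_i/J_i\mT_i,E/\Oo)[\varpi]\cong\Hom_\Oo(\mT_i\otimes_{\bfT_i}\bfF,\bfF)$, and by Proposition~5.8 of \cite{BergerKlosin13} the $\varpi$-torsion of the target is $H^1_\Sigma(F,\chi^{-1})$. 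Under these identifications $f|_{\varpi\text{-tor}}$ becomes $\bigoplus_i\iota_i$, where $\iota_i$ is the analogue for $\bfT_i$ of the map in Lemma \ref{iota43}. By the analogue of Proposition \ref{modext1} applied to $\bfT_i$, the image of $\iota_i$ is contained in $\bfF\cdot e_i$ (only the class $\tau^i$ has $\Pi_\tau\cap\Pi_i\neq\emptyset$, by Proposition \ref{SW2.13}), and injectivity of $\iota_i$ follows from that of $\iota_{J_i,i}$ by restriction. Since $\{e_1,\ldots,e_s\}$ is a basis of $H^1_\Sigma(F,\chi^{-1})$ by Corollary \ref{surj1prime}, the sum $\sum_i\bfF\cdot e_i$ is direct, so $f|_{\varpi\text{-tor}}$ is injective and hence so is $f$. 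This injectivity is the crux of the argument and relies essentially on the basis property of the $e_i$.

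Finally, the Fitting-ideal argument in the proof of Lemma \ref{iota43J} (using $\Fit_{\bfT_i}(\mT_i)=0$ from Urban's theorem and the identity (\ref{MW})) gives $\#\mT_i/J_i\mT_i\ge\#\bfT_i/J_i$, and Pontryagin duality for finite $\Oo$-modules yields $\#\Hom_\Oo(\mT_i/J_i\mT_i,E/\Oo)=\#\mT_i/J_i\mT_i$. Injectivity of $f$ therefore produces
\[
\prod_i\#\bfT_i/J_i\;\le\;\prod_i\#\Hom_\Oo(\mT_i/J_i\mT_i,E/\Oo)\;=\;\#\!\bigoplus_i\Hom_\Oo(\mT_i/J_i\mT_i,E/\Oo)\;\le\;\#H^1_\Sigma(F,\Hom_\Oo(\tilde\tau_2,\tilde\tau_1)\otimes_\Oo E/\Oo),
\]
and the right-hand side is $\le\#\Oo/L\le\#\bfT/J$ by Assumptions \ref{BK} and \ref{bound on T/J}, which completes the proof.
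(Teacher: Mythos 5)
Your proof is correct and follows essentially the same strategy as the paper's: apply Urban's theorem to each $\bfT_i$ separately, use the Fitting-ideal argument to bound $\#\bfT_i/J_i$ by $\#\mT_i/J_i\mT_i$, and exploit the fact that the images of the $\iota_i$ land in the lines $\bfF\cdot e_i$ spanned by the basis $\mB$ in order to combine the per-index bounds into a single bound by $\#H^1_{\Sigma}(F,\Hom(\tilde\tau_2,\tilde\tau_1)\otimes E/\Oo)$, finishing with Assumptions \ref{BK} and \ref{bound on T/J}. The one point of divergence is packaging: the paper first shows that $\mT_i/J_i\mT_i$ is cyclic, deduces $\mT_i/J_i\mT_i\cong\bfT_i/J_i=\Oo/\varpi^{d_i}$, produces a representation $\tilde\rho_i:G_\Sigma\to\GL_2(\Oo/\varpi^{d_i})$, and then derives from it cohomology classes $c_i$ whose $\Oo$-spans must be independent because their reductions are the $e_i$; you instead skip the cyclicity step, combine the maps $\iota_{J_i,i}$ directly into a single homomorphism $f=\bigoplus_i\iota_{J_i,i}$, and verify injectivity by restricting to $\varpi$-torsion. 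Your organization is arguably cleaner because it makes explicit the direct-sum argument that the paper leaves implicit when it passes from $\sum_i d_i\le\sum_i r_i$ to the Selmer-group bound.
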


\begin{proof} In this proof we follow mostly the notation of \cite{Klosin09}, section 9. 
Let $\Pi_i$ be as before. As in  section \ref{lattice} we use Theorem \ref{Urban 1.1}  to get a lattice $\mL_i \subset \prod_{\pi \in \Pi_i}\rho_{\pi}$ and a finitely generated $\bfT_i$-module $\mT_i$ such that the following sequence of $\bfT_i/J_i[G_{\Sigma}]$-modules is exact: 
\be \label{ext43} 0 \to (\mT_i/J_i \mT_i)\otimes_{\Oo} \tilde{\tau}_1 \to \mL_i/J_i \to (\bfT_i/J_i)\otimes_{\Oo} \tilde{\tau}_2 \to 0.\ee 
Similarly to the situation in section \ref{lattice}, the sequence splits as a sequence of $\bfT_i$-modules, hence after tensoring with $\bfF$ we obtain a short exact sequence:
\be \label{ext43tensored} 0 \to (\mT_i/J_i \mT_i)\otimes_{\bfT_i} {\tau}_1 \to \mL_i/J_i\otimes_{\bfT_i} \bfF \to (\bfT_i/J_i)\otimes_{\bfT_i} {\tau}_2 \to 0.\ee 
Fix $i$. As in the proof of Lemma \ref{iota43} the sequence (\ref{ext43tensored}) gives rise to an injection 
 \be \label{iotai}\iota_{i}: \Hom(\mT_{i}/J_{i} \mT_{i}, \bfF) \hookrightarrow  H^1_{\Sigma}(F, \Hom_{\Oo}({\tau}_2, {\tau}_1)). \ee
 Arguing exactly as in the proof of Proposition \ref{modext1} we see that the image of $\iota_i$ is one-dimensional and is spanned by the cohomology classes corresponding to the isomorphism class of $\tau^i$, i.e., 
for every $i=1,2, \dots, s$ one has $\image (\iota_i) \subset \left< e_i \right>$. 
This implies that $ \Hom(\mT_i/J_i \mT_i,\bfF)$ is one-dimensional and hence $\mT_i/J_i \mT_i$ is a cyclic $\Oo$-module, and hence a cyclic $\bfT_i/J_i(=\Oo/\varpi^{d_i})$-module (cf. Lemma \ref{cyclicity}). On the other hand, again using Lemma 9.21 of \cite{Klosin09}, we get that $\textup{Fitt}_{\bfT_i}\mT_i=0$ and  this implies (as in the  proof of Lemma \ref{iota43J}) that 
$$\val_p(\# \bfT_i/J_i) \leq \val_p(\# \mT_i/J_i \mT_i).$$ This combined with the fact that $\mT_i/J_i\mT_i$ is a cyclic $\bfT_i/J_i$-module implies that $\bfT_i/J_i \cong \mT_i/J_i\mT_i$. In particular this implies  that the lattice $\mL_i/J_i \cong (\bfT_i/J_i)^2$ as $\bfT_i$-modules. 

Let $\tilde{\rho}_i: G \to \GL_2(\bfT_i/J_i)$ be the representation given by the short exact sequence $0 \to (\bfT_i/J_i)\otimes \tilde{\tau}_1 \to \mL_i/J_i \to (\bfT_i/J_i)\otimes \tilde{\tau}_2\to 0$ (coming from the sequence (\ref{ext43}) and the fact that $\mT_i/J_i\mT_i \cong \bfT_i/J_i$). One has $\bfT_i/J_i = \Oo/\varpi^{d_i}$ and since $\tilde{\rho}_i$ reduces to $\tau^i$ we must have $d_i \leq r_i$, where $\Oo c_i \cong \Oo/\varpi^{r_i} \Oo$. So, in particular we get that $\sum_i d_i \leq \sum r_i$. Combining Assumptions \ref{BK} with \ref{bound on T/J} we obtain the claim of Proposition \ref{l22}. \end{proof}

Our goal is now to prove the opposite inequality, which under some additional assumption will 
follow from a more general commutative algebra result which was proved by the authors and Kenneth Kramer in \cite{BergerKlosinKramer14} and which we will now present.

Let $s \in \bfZ_+$ and let $\{n_1, n_2, \dots, n_s\}$ be a set of $s$ positive integers. Set $n=\sum_{i=1}^s n_i$.
 Let $A_i=\Oo^{n_i}$ with $i\in \{1,2,\dots, s\}$.  Set $A=\prod_{i=1}^s A_i=\Oo^n$. Let $\varphi_i: A\twoheadrightarrow A_i$ be the canonical projection. Let $T \subset A$ be a (local complete) $\Oo$-subalgebra which is of full rank as an $\Oo$-submodule and let $J \subset T$ be an ideal of finite index. Set $T_i=\varphi_i(T)$ and $J_i=\varphi_i(J)$. Note that each $T_i$ is also a (local complete) $\Oo$-subalgebra and the projections $\varphi_i|_{T}$ are local homomorphisms. Then $J_i$ is also an ideal of finite index in $T_i$.
\begin{thm} [\cite{BergerKlosinKramer14}, Theorem 2.1] \label{Kr12} If $\# \bfF^{\times} \geq s-1$ and each $J_i$ is principal, then $\# \prod_{i=1}^s T_i/J_i \geq \# T/J$.\end{thm}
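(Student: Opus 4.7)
The desired inequality $\#T/J \leq \#\prod_{i=1}^s T_i/J_i$ is equivalent to injectivity of the natural $\Oo$-algebra map $\psi : T/J \to \prod_{i=1}^s T_i/J_i$ sending $t + J$ to $(\varphi_i(t) + J_i)_i$. Setting $\tilde{J} := \{t \in T : \varphi_i(t) \in J_i \text{ for all } i\}$, one automatically has $J \subseteq \tilde{J}$ from the definition $J_i = \varphi_i(J)$, so the plan reduces to proving the reverse inclusion $\tilde{J} \subseteq J$, i.e., any $t \in T$ whose every projection already lies in the corresponding $J_i$ must itself be in $J$.

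I would argue by induction on $s$, with principality used in an essential way. The case $s=1$ is trivial, since there $T_1 = T$ and $J_1 = J$. For the inductive step, use principality to pick for each $i$ an element $j_i \in J$ such that $\varphi_i(j_i)$ generates $J_i$; given $t \in \tilde{J}$, the goal is to express $t$ as a $T$-linear combination $\sum_{i=1}^s r_i j_i$. Writing $\varphi_i(t) = \ov{r}_i\,\varphi_i(j_i)$ with $\ov{r}_i \in T_i$, one would like to lift each $\ov{r}_i$ to some $r_i \in T$ and peel off coordinates one at a time: after subtracting $r_1 j_1$ the first coordinate of $t$ becomes zero, and since $j_1 \in J$ has $\varphi_k(j_1) \in J_k$ for every $k$, the modified $t$ still lies in $\tilde{J}$. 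Iterating through coordinates $2, \dots, s$ would force all projections of (a modified) $t$ to vanish, hence $t \in J$.

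The main obstacle, and the reason the hypothesis on $\#\bfF^\times$ enters, is that these lifts $r_i$ cannot be chosen independently: at step $i$ one needs $r_i$ to lift $\ov{r}_i \in T_i$ while \emph{simultaneously} having prescribed residues in $T_1,\dots, T_{i-1}$ so as not to destroy the vanishing already achieved. Whether such a simultaneous lift exists is controlled by an obstruction problem in the residue field $\bfF$: one must evade $s-1$ affine conditions coming from the other coordinates, in the spirit of prime avoidance. Since a finite-dimensional $\bfF$-vector space cannot be covered by fewer than $\#\bfF + 1$ proper affine subspaces, the hypothesis $\#\bfF^\times \geq s-1$ (equivalently $\#\bfF \geq s$) gives exactly enough room to choose residues evading all $s-1$ obstructions at once. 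The principality of each $J_i$ is indispensable: it reduces the lifting problem at each coordinate to a single scalar equation, so that only $s-1$ one-dimensional obstructions (rather than a matrix of them) need be avoided, which is precisely what can be handled by the residue-field counting bound.
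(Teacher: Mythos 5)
The paper gives no proof of Theorem~\ref{Kr12}; it is quoted from \cite{BergerKlosinKramer14}, so there is no in-paper argument to compare against. Judged on its own, your proposal has a genuine gap at the very first step: the cardinality bound is \emph{not} equivalent to injectivity of $\psi\colon T/J\to\prod_i T_i/J_i$, and that map can fail to be injective even when all the hypotheses hold. Take $s=2$, $n_1=n_2=1$, $T=\{(a,b)\in\Oo^2: a\equiv b\pmod{\varpi}\}$ and $J=\varpi T$. Then $T_1=T_2=\Oo$ and $J_1=J_2=\varpi\Oo$ are principal, $\#\bfF^\times\ge s-1=1$ is automatic, yet $(\varpi,0)\in T$ satisfies $\varphi_i((\varpi,0))\in J_i$ for $i=1,2$ while $(\varpi,0)\notin J=\{(\varpi a,\varpi b): a\equiv b\pmod{\varpi}\}$, so $\ker\psi\neq 0$. (Here $\#T/J=q^2$, $\#\ker\psi=q$, and $\#\prod_i T_i/J_i=q^2$, so the asserted bound holds with equality while $\psi$ is neither injective nor surjective.) Your peeling iteration accordingly cannot be repaired: with $j_1=j_2=(\varpi,\varpi)$ and $t=(\varpi,0)$, every lift $r_2\in T$ of $\ov{r}_2\in T_2=\Oo$ has $\varphi_1(r_2)\equiv\varphi_2(r_2)\not\equiv 0\pmod{\varpi}$, so subtracting $r_2 j_2$ necessarily re-introduces a nonzero first coordinate; no affine-avoidance count helps, because the statement you are steering toward is simply false.

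The hypothesis $\#\bfF^\times\ge s-1$ enters in \cite{BergerKlosinKramer14} at a different point: by a covering argument in the $\bfF$-vector space $J/\fm_T J$ (one must avoid $s$ proper subspaces, and $\#\bfF\ge s$ suffices), one produces a \emph{single} $j\in J$ such that $\varphi_i(j)$ generates $J_i$ for every $i$ simultaneously. Given such a $j$, the bound follows from a lattice-index computation rather than from $\psi$: since $jT\subseteq J$ one has $\#T/J\le \#T/jT$; since $T$ has full $\Oo$-rank in $A=\Oo^n$ and multiplication by $j$ is injective (each $J_i$ having finite index), $\#T/jT=\#A/jA=\prod_i\#A_i/\varphi_i(j)A_i$; and since each $T_i$ has full rank in $A_i$, $\#A_i/\varphi_i(j)A_i=\#T_i/\varphi_i(j)T_i=\#T_i/J_i$. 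Principality of the $J_i$ makes ``one element of $J$ generating every $J_i$'' a sensible target; it is not a device for linearizing lifting obstructions.
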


 Let $V$ be a vector space and write $\bfP^1(V)$ for the set of all lines in $V$ passing through the origin. There is a canonical map $V \setminus \{0\} \to \bfP^1(V)$ sending a vector $v$ to the line spanned by $v$.

Let $\mS$ be the set of all modular bases of $H^1_{\Sigma}(F, \Hom (\tau_2, \tau_1))$, i.e., the set of bases $\mB'=\{e'_1, e'_2, \dots, e'_s\}$ having the property that $\Pi_{\tau'_i} \neq \emptyset$, where $\tau'_i$ is the residual representation corresponding to the extension represented by $e'_i$. The set $\mS$ is non-empty as $\mB \in \mS$.

\begin{definition} \label{projuniq} We will say that $H^1_{\Sigma}(F, \Hom (\tau_2, \tau_1))$ has a \emph{projectively unique modular basis}  if the images of all the elements of $\mS$ in $\bfP^1(H^1_{\Sigma}(F, \Hom(\tau_2, \tau_1))$ agree. In the case when $H^1_{\Sigma}(F, \Hom (\tau_2, \tau_1))$ has this property we will refer to any element of $\mS$ as the projectively unique modular basis. \end{definition}

Note that it is possible to find $i_0 \in \{1,2, \dots, s\}$ such that the set $\mB':=\mB \cup \{ e_{\tau}\} \setminus \{e_{i_0}\}$ is still a basis of $H^1_{\Sigma}(F, \Hom(\tau_2, \tau_1))$ (and one still has that $\Pi_{\tau'}\neq \emptyset$ for all $\tau' \in \mB'$). Hence we can assume without loss of generality that $\mB=\{e_1, e_2, \dots, e_s\}$ with $\tau^1=\tau$. 
 In fact, if $H^1_{\Sigma}(F, \Hom (\tau_2, \tau_1))$ has a projectively unique modular basis, it follows that if $\mB'$ is another modular basis, then the isomorphism classes of the residual representations corresponding to the elements of $\mB'$ are the same as the isomorphism  classes of the residual representations corresponding to the elements of $\mB$.

\begin{prop} \label{otherine} Suppose $H^1_{\Sigma}(F, \Hom (\tau_2, \tau_1))$ has a projectively unique modular basis. If for each $i$, the ideal $J_i$ of $\bfT_i$ is principal and $\bfT_i/J_i$ is finite, then $\# \prod_{i=1}^s \bfT_i/J_i \geq \# \bfT/J$. \end{prop}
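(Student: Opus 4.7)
The plan is to deduce the bound from the general commutative-algebra inequality Theorem~\ref{Kr12} applied to the inclusion $\bfT \hookrightarrow \prod_{\pi \in \Pi}\Oo$. The main work is to verify the hypotheses of that theorem after exploiting projective uniqueness to obtain a decomposition indexed by exactly $s$ factors.

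First I would use the projective-uniqueness hypothesis to pin down the residual classes contributing to $\Pi$. Given any $\tau' \in \fT$ with $\Pi_{\tau'}\neq\emptyset$, represent it by $0\neq e' \in H^1_{\Sigma}(F,\Hom(\tau_2,\tau_1))$. Writing $e'=\sum_i \alpha_i e_i$ with some $\alpha_{i_0}\neq 0$, the exchange lemma shows that $(\mB\setminus\{e_{i_0}\})\cup\{e'\}$ is again a basis; since each of its elements corresponds to a modular residual class, it lies in $\mS$. By the projective-uniqueness hypothesis (cf.\ the discussion following Definition~\ref{projuniq}) the class of $e'$ agrees with that of some $e_i$, so $\tau'\cong\tau^i$. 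Combined with Proposition~\ref{SW2.13} this yields the disjoint decomposition
\[
\Pi \;=\; \bigsqcup_{i=1}^{s}\Pi_i.
\]

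Next I would set up the ambient ring $A := \prod_{i=1}^{s}A_i$ with $A_i := \prod_{\pi\in\Pi_i}\Oo$, together with the coordinate projections $\varphi_i : A \twoheadrightarrow A_i$. By Definition~\ref{def hecke}, $\bfT$ is the image of $R^{\rm ps}$ in $A$, and $\varphi_i(\bfT)=\bfT_i$, $\varphi_i(J)=J_i$. The remaining hypotheses of Theorem~\ref{Kr12} I would check as follows: $\bfT$ is local complete since it is a quotient of $R^{\rm ps}$; $\bfT/J$ is finite, being cyclic by Lemma~\ref{Tcyclic} and bounded in order by $\#\Oo/L$ via the Selmer-group argument used in the proof of Proposition~\ref{l22} (combining Lemma~\ref{iota43J} with Assumptions~\ref{BK} and~\ref{bound on T/J}); $\bfT$ has full $\Oo$-rank in $A$, because Proposition~\ref{SW2.13} makes the representations $\{\rho_\pi\}_{\pi\in\Pi}$ pairwise non-isomorphic, each is absolutely irreducible by the argument of Lemma~\ref{irr4}, and linear independence of characters of inequivalent absolutely irreducible representations forces $\{\tr\rho_\pi\}_{\pi\in\Pi}$ to be $E$-linearly independent; finally, each $J_i$ is principal by hypothesis.

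The one combinatorial condition $\#\bfF^\times\geq s-1$ in Theorem~\ref{Kr12} I would secure by replacing $\Oo$ by an unramified extension $\Oo'$ of sufficiently large degree $e$. Such a base change leaves $s=\dim_\bfF H^1_{\Sigma}(F,\Hom(\tau_2,\tau_1))$ unchanged and raises both $\#\bfT/J$ and each $\#\bfT_i/J_i$ to the $e$-th power, so the inequality to be proved is preserved. Theorem~\ref{Kr12} then yields $\prod_{i=1}^s\#\bfT_i/J_i \geq \#\bfT/J$, which is the desired conclusion. I expect the subtle step to be the first one: without projective uniqueness, modular residual classes outside $\{\tau^1,\dots,\tau^s\}$ could occur, enlarging the product one would need to consider and breaking the clean identification $\varphi_i(J)=J_i$ that makes Theorem~\ref{Kr12} directly applicable.
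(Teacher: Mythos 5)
Your proof is correct and follows the same route as the paper: partition $\Pi$ by residual isomorphism class using projective uniqueness, verify that $\bfT$ sits with full $\Oo$-rank inside $\prod_{\pi\in\Pi}\Oo$, and apply Theorem~\ref{Kr12}. The one place you diverge is the verification that $\bfT/J$ is finite: the paper derives this directly from the proposition's own hypotheses --- since $\bfT$ has full rank in $\prod_i\bfT_i$ there is $N$ with $p^N e_i\in\bfT$ for each idempotent $e_i$, and finiteness of each $\bfT_i/J_i$ gives $p^M\in J_i$, so choosing preimages $x_i\in J$ yields $p^{N+M}=\sum_i x_i p^N e_i\in J$ --- whereas you fall back on the section-wide Assumptions~\ref{BK} and~\ref{bound on T/J} via the Selmer-group bound. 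The paper's argument is preferable here because it shows the stated hypotheses on $\bfT_i/J_i$ already force $\bfT/J$ to be finite, and it avoids leaning on Lemma~\ref{iota43J}, whose proof implicitly takes $\bfT/J=\Oo/\varpi^n$ with $n$ finite as a starting point.
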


\begin{proof} First note that our assumption that $E$ be sufficiently large allows us to assume that $\#\bfF^{\times}$ satisfies the inequality in Theorem \ref{Kr12}. Since $\bfT$ is a free $\Oo$-module of finite rank we set $n$ to be that rank and define $n_i$ to be the rank of $\bfT_i$. The assumption that $\mB$ be projectively unique guarantees that every $\Oo$-algebra homomorphism has a corresponding residual Galois representation isomorphic to $\tau^i$ for some $i$. Hence $n =\sum_{i=1}^s n_i$.
Finally, note that $\bfT/J$ is finite. Indeed, first note that if we consider $\bfT$ as a (full rank) $\Oo$-subalgebra of $\prod_{i=1}^s \bfT_i$ then for a sufficiently large exponent $N$, we have $p^Ne_i \in \bfT$, where $e_i\in \prod_{i=1}^s\bfT_i$ is the idempotent corresponding to $\bfT_i$. On the other hand because $\bfT_i/J_i$ is finite for each $i$, there exists a positive integer $M$ such that $p^M \in J_i$ for each $i$. Let $x_i\in J$ be a preimage of $p^M \in J_i$. Then $p^{N+M} = \sum_{i=1}^s x_ip^N e_i \in J$, hence $T/J$ is torsion and thus finite. 
The Proposition now follows from Theorem \ref{Kr12} by taking $T=\bfT$. 
 \end{proof}

\begin{rem} \label{consequencemodular} Theorem \ref{Kr12} also has consequences for congruences between modular forms. Suppose that $\bfT=\bfT_{\Sigma}$ is the cuspidal Hecke algebra (acting on the space of automorphic forms over imaginary quadratic fields of weight 2 right invariant under a certain compact subgroup $K_f$) localized at a maximal ideal corresponding to an Eisenstein series, say $\mE$. Let $J=J_{\Sigma}$ be the Eisenstein ideal corresponding to $\mE$ (see section \ref{s43} for the details). Let $\mN$ be the set of $\Oo$-algebra homomorphisms $\bfT \to \Oo$, i.e. to cuspidal Hecke eigencharacters congruent to the eigencharacter $\lambda_0$ of $\mE$ mod $\varpi$. For $\lambda \in \mN$ write $m_{\lambda}$ for the largest positive integer such that $\lambda(T)\equiv \lambda_0(T)$ mod $\varpi^{m_{\lambda}}$ for all $T \in \bfT$. Let $e$ be the ramification index of $E$ over $\bfQ_p$. As a consequence of Theorem \ref{Kr12} we get the following inequality (cf. Proposition 4.3 in \cite{BergerKlosinKramer14} $$\frac{1}{e}\sum_{\lambda \in \mN} m_{\lambda} \geq \val_{\varpi}(\#\bfT/J).$$ 
For many more applications of Theorem \ref{Kr12} see [loc.cit.]. 
\end{rem}

\begin{cor} \label{exhaustive} Suppose that Assumptions  \ref{BK} and \ref{bound on T/J} hold. If the modular basis $\mB$ is projectively unique and 
if for each $i$, the ideal $J_i$ of $\bfT_i$ is principal and $\bfT_i/J_i$ is finite, 
then $$\# \prod_{i=1}^s \bfT_i/J_i = \# \bfT/J\geq \#\Oo/L.$$ \end{cor}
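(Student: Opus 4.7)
The proof is essentially a combination of three results already in place, so my plan is to simply assemble them. First, Proposition \ref{l22} gives the inequality
\[
\#\prod_{i=1}^s \bfT_i/J_i \;\leq\; \#\bfT/J
\]
under Assumptions \ref{BK} and \ref{bound on T/J} alone (no hypothesis on principality of $J_i$ or on the uniqueness of $\mB$ is needed for this direction, since the argument proceeds via the Urban lattice $\mL_i$, the resulting injection $\iota_i$ whose image lies in the line spanned by $e_i$, and a Fitting-ideal comparison).

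In the opposite direction, I invoke Proposition \ref{otherine}: the assumption that $\mB$ is projectively unique ensures that every $\Oo$-algebra homomorphism $\bfT \to \Oo$ has residual representation isomorphic to some $\tau^i$ (so the ranks add: $n = \sum n_i$), while the assumption that each $J_i$ is principal and each $\bfT_i/J_i$ finite is exactly what is needed to apply the commutative-algebra theorem from \cite{BergerKlosinKramer14} (Theorem \ref{Kr12}) with $T = \bfT$, $T_i = \bfT_i$. This yields
\[
\#\prod_{i=1}^s \bfT_i/J_i \;\geq\; \#\bfT/J,
\]
so combining with Proposition \ref{l22} produces the desired equality.

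Finally, the bound $\#\bfT/J \geq \#\Oo/L$ is precisely the content of Assumption \ref{bound on T/J}, giving the second inequality in the statement. The only thing to verify in passing is that the hypotheses of Proposition \ref{otherine} are indeed met under the assumptions of the corollary, and that $E$ is large enough for Theorem \ref{Kr12} to apply (i.e.\ $\#\bfF^\times \geq s-1$), which is guaranteed by our standing convention that $\Oo$ is sufficiently large. I do not anticipate any genuine obstacle here — the corollary is a bookkeeping consequence of Propositions \ref{l22} and \ref{otherine} and Assumption \ref{bound on T/J}, and no new argument is required.
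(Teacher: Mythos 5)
Your proposal is correct and matches the paper's intended argument exactly: the paper leaves this corollary without a proof block precisely because it is the immediate conjunction of Proposition \ref{l22} ($\leq$), Proposition \ref{otherine} ($\geq$, under the projective-uniqueness and principality hypotheses), and Assumption \ref{bound on T/J} for the final bound. Your observations about which hypotheses are needed where, and the remark that $E$ being sufficiently large covers the $\#\bfF^\times \geq s-1$ requirement of Theorem \ref{Kr12}, are accurate and consistent with what Proposition \ref{otherine} already handles internally.
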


\section{Urban's method applied to $R^{\rm tr, 0}$} \label{urban}
In this section we again set $n=2$, $\tau_1=1$ and $\tau_2=\chi$ and we
fix a residual representation $$\tau: G_{\Sigma} \to \GL_{2}(\bfF), \quad \tau = \bmat 1 & \chi f \\ 0 & \chi \emat.$$

Let $\Phi_{\tau}, \Pi_{\tau}$ be as in section \ref{The rings Ttau}. 
From now on we will assume that $\Pi_{\tau}$ is non-empty.  Also recall that we make Assumption \ref{finiteness}, which by  Remark \ref{Phirem} is equivalent to assuming $\Phi_{\tau}$ is finite (in particular $R^0_{\tau}$ is defined and  finitely generated as an $\Oo$-module). 
The surjection $\phi: R_{\tau}^{\rm tr, 0} \twoheadrightarrow \bfT_{\tau}$  (cf. Definition \ref{def hecke} and Lemma \ref{factoring}) 
 descends to a surjection \be \label{modItr} R_{\tau}^{\rm tr, 0}/I_{\tau}^{\rm tr, 0} \twoheadrightarrow \bfT_{\tau}/J_{\tau}\ee (since by Lemma \ref{cyclicity2} $\phi_{\tau}(I^{\rm tr, 0}_{\tau}) = J_{\tau}$). 
The main goal of this section is to prove that under certain assumptions the map in (\ref{modItr}) is an isomorphism (Theorem \ref{modsurj}). Before we state the theorem let us demonstrate several properties of $R_{\tau}^{\rm tr,0}$. In particular we will show that $R_{\tau}^{\rm tr,0} \cong R_{\tau}^0$ (Theorem \ref{deformtoRtr}). In this section we also assume that Assumptions \ref{ass1}, \ref{BK} and \ref{bound on T/J} are satisfied.

\begin{lemma} \label{fractions} The $\Oo$-rank of $R_{\tau}^{0}$ equals the $\Oo$-rank of $R_{\tau}^{\rm tr, 0}$. In particular the normalizations (and the total rings of fractions) of $R_{\tau}^{0}$ and $R_{\tau}^{\rm tr, 0}$ coincide. \end{lemma}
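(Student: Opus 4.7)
My plan is to reduce the rank equality to counting minimal primes in each ring. Both $R_\tau^{\rm tr,0}\subset R_\tau^0$ are reduced (they are $\Oo$-subalgebras of $\prod_{\fp\in\mP(\tau)}\Oo$) and finitely generated as $\Oo$-modules (for $R_\tau^0$ by Assumption \ref{finiteness}, and hence also for its $\Oo$-subalgebra $R_\tau^{\rm tr,0}$). For any such $R$, the tensor product $R\otimes_\Oo E$ is a finite reduced $E$-algebra, hence a product of finite field extensions of $E$; since $E$ was chosen sufficiently large, every factor is $E$ itself. Consequently $\rk_\Oo R=\dim_E(R\otimes_\Oo E)$ equals the number of minimal primes of $R$, and since $R_\tau^{\rm tr,0}\subset R_\tau^0$, one inequality $\rk_\Oo R_\tau^{\rm tr,0}\le\rk_\Oo R_\tau^0$ is automatic.

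Next I would count minimal primes. Each $\fp\in\mP(\tau)$ yields a surjection $R_\tau^0\twoheadrightarrow R_\tau/\fp=\Oo$ whose kernel $\fq_\fp$ is a minimal prime; distinct $\fp$'s produce distinct $\fq_\fp$'s (test by lifting any element of $\fp_1\setminus\fp_2$ to $R_\tau$), and the injection $R_\tau^0\hookrightarrow\prod_\fp\Oo$ rules out additional minimal primes, so $\rk_\Oo R_\tau^0=|\mP(\tau)|$. The same procedure gives a minimal prime $\fq_\fp^{\rm tr}=\ker(R_\tau^{\rm tr,0}\hookrightarrow R_\tau^0\twoheadrightarrow\Oo)$ of $R_\tau^{\rm tr,0}$, with $R_\tau^{\rm tr,0}/\fq_\fp^{\rm tr}$ a torsion-free finite $\Oo$-subalgebra of $\Oo$ containing the structure map, and hence equal to $\Oo$. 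Two primes $\fp_1,\fp_2$ produce the same $\fq^{\rm tr}$ if and only if $\tr\rho_{\fp_1}(\Frob_\fq)=\tr\rho_{\fp_2}(\Frob_\fq)$ for every $\fq\notin\Sigma$, equivalently (by Chebotarev) $\tr\rho_{\fp_1}=\tr\rho_{\fp_2}$ on $G_\Sigma$. Using Lemma \ref{irr4} to conclude that each $\rho_\fp\otimes E$ is absolutely irreducible, Brauer--Nesbitt turns this trace coincidence into the statement $\rho_{\fp_1}\otimes E\cong\rho_{\fp_2}\otimes E$.

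The remaining step, which I expect to be the main obstacle, is to verify that distinct $\fp\in\mP(\tau)$ produce non-isomorphic generic fibers; equivalently, that the natural map $\Phi_\tau\to\Phi_{\tau,E}$ is injective. I would argue that two $G_\Sigma$-stable $\Oo$-lattices inside a common absolutely irreducible $V\in\Phi_{\tau,E}$ whose reductions both equal the prescribed non-split $\tau$ are strictly equivalent as $\Oo$-deformations. This is a classical fact for $\GL_2$ that can be read from the Bruhat--Tits tree, and it can alternatively be established by the Ext-dimension argument used in the proof of Proposition \ref{SW2.13}: the finiteness of $\Oo/I_T$ forced by Assumption \ref{BK} shows that $\mathrm{Ext}^1_{\bfF[G_\Sigma]/\ker T}(\tau_2,\tau_1)$ is at most one-dimensional, which pins down the strict-equivalence class of the lattice.

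Once injectivity is in hand, we obtain $\rk_\Oo R_\tau^{\rm tr,0}=|\mP(\tau)|=\rk_\Oo R_\tau^0$. The ``in particular'' clause is then immediate: both total rings of fractions equal $\prod_{\fp\in\mP(\tau)}E$, and both normalizations equal $\prod_{\fp\in\mP(\tau)}\Oo$ since each minimal-prime quotient has already been identified with $\Oo$, which is itself normal.
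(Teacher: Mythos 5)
Your proposal is correct and, up to the Brauer--Nesbitt step, runs parallel to the paper; but the final step is handled quite differently. Both arguments reduce, after identifying $\Oo$-ranks with numbers of minimal primes, to showing that two distinct minimal primes $\fp_1\neq\fp_2$ of $R_\tau^0$ cannot contract to the same minimal prime of $R_\tau^{\rm tr,0}$. At that point the paper works by hand: it conjugates $\rho_\tau^0$ (via Hensel applied at a $\sigma$ where the diagonal characters differ mod $\varpi$) so that the $(1,1)$, $(2,1)$ and $(2,2)$ entries land in $R_\tau^{\rm tr,0}$, observes that the two $\Oo$-valued deformations $\rho_1,\rho_2$ then share those three entries, and shows by an explicit computation with the intertwiner $M\in\GL_2(E)$ supplied by Brauer--Nesbitt that $M$ must be scalar unless $b_1=b_2$ identically; this forces $\rho_1=\rho_2$ and contradicts the universal property of $R_\tau^{\rm red}$. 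You instead package the same content as injectivity of $\Phi_\tau\to\Phi_{\tau,E}$, i.e.\ the uniqueness up to strict equivalence of a $G_\Sigma$-stable lattice inside an absolutely irreducible $V$ with prescribed non-split reduction $\tau$. That is more conceptual and reusable (it works verbatim for multiplicity-free residual representations in any rank), while the paper's direct matrix argument is a self-contained, hands-on proof of exactly that lattice fact in the rank-$2$ case.

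One imprecision worth flagging: the Ext-dimension argument from Proposition \ref{SW2.13} does not by itself ``pin down the strict-equivalence class of the lattice''; it only shows the two residual reductions are \emph{isomorphic}, not that the resulting deformations are strictly equivalent. The step you actually need is $\End_{\bfF[G_\Sigma]}(\tau)=\bfF$ for non-split $\tau$ with distinct absolutely irreducible constituents: normalize the intertwiner so $M\in M_2(\Oo)$ with $M\not\equiv 0\pmod\varpi$; then $\ov M$ is a nonzero $G_\Sigma$-endomorphism of $\tau$, hence a nonzero scalar, hence invertible, so $M\in\GL_2(\Oo)$, and after dividing by a unit lift of that scalar $M$ lies in $\ker(\GL_2(\Oo)\to\GL_2(\bfF))$. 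Your Bruhat--Tits tree reference does yield the claim directly, so nothing is actually missing, but you should replace the Ext citation with this $\End=\bfF$ observation (or just the tree argument) to make the injectivity of $\Phi_\tau\to\Phi_{\tau,E}$ airtight.
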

\begin{proof} Write $\rho$ for $\rho_{\tau}^0$, i.e., $\rho: G_{\Sigma} \to \GL_2(R_{\tau}^{0})$.  We claim that we can conjugate $\rho$ so that for every $g \in G_{\Sigma}$ we have $\rho(g) = \bmat a & b \\ c & d \emat$ with $a,c,d \in R_{\tau}^{\rm tr, 0}$. Indeed, since the characters on the diagonal of $\tau$ are distinct mod $\varpi$, we can find $\sigma \in G_{\Sigma}$ on which they differ, so that the eigenvalues of $\tau(\sigma)$ lift by Hensel's lemma to
distinct eigenvalues of $\rho(\sigma)$ in $R_{\tau}^{0}$, and we can conjugate (over $R_{\tau}^{0}$) to have
 $\rho(\sigma)$ be diagonal with these lifted eigenvalues. For a
 general element $g\in G_{\Sigma}$, we then compare $\tr \rho(g)$ with $\tr \rho(\sigma g)$ and
use that the eigenvalues are distinct mod $\varpi$ to see that the two diagonal
 entries of $\rho(g)$ lie in $R_{\tau}^{\rm tr, 0}$. Similarly we show (cf. the proof of Lemma 3.27 in \cite{DDT}) that the lower-left entry also lies in $R_{\tau}^{\rm tr, 0}$. 

Note that since $R_{\tau}^{0}$ is a finitely generated $\Oo$-module 
(and $\Oo$ is assumed to be sufficiently large) we get an embedding $$R_{\tau}^{0} \hookrightarrow \hat{R}_{\tau}^{0} \cong  \prod_{i=1}^k \Oo,$$ where $\hat{R}_{\tau}^{0}$ is the normalization of $R_{\tau}^{0}$. For $i=1, \dots, k$, write $\rho_i$ for the composition of $\rho$ with the projection onto the $i$th component of $\hat{R}_{\tau}^{0}$. Suppose that the $\Oo$-rank of $R_{\tau}^{\rm tr, 0}$ is strictly smaller than the $\Oo$-rank of $R_{\tau}^{0}$. Then there exist two minimal primes (after possibly renumbering the minimal primes we will call them $\fp_1$, $\fp_2$) of $R_{\tau}^{0}$ which contract to the same minimal prime $\fp$ of $R_{\tau}^{\rm tr, 0}$.
Hence we get the following commutative diagram:
$$\xymatrix{R_{\tau}^{\rm tr, 0}\ar@{^{(}->}[r] \ar@{->>}[dd]&R_{\tau}^{0}\ar@{->>}[d] \ar@{->>}[dr]\\
&R_{\tau}^{0}/\fp_1 & R_{\tau}^{0}/\fp_2\\
R_{\tau}^{\rm tr, 0}/\fp\ar[ur]_{\sim}\ar[urr]_{\sim}}$$
This implies that the corresponding two deformations (to $\Oo$) $\rho_1 = \bmat a_1 & b_1 \\ c_1 & d_1 \emat$ and $\rho_2= \bmat a_2 & b_2 \\ c_2 & d_2 \emat$ (since their $a$-, $c$- and $d$-entries (as functions) factor through  $R_{\tau}^{\rm tr, 0}/\fp$) must satisfy $a_1=a_2=:a$, $c_1=c_2=:c$ and $d_1 = d_2=:d$. In particular their traces are equal. 
Using Lemma \ref{irr4} we see that both $\rho_1 \otimes_{\Oo} E$ and $\rho_2 \otimes_{\Oo} E$ are irreducible and thus by the Brauer-Nesbitt Theorem we conclude that $\rho_1 \otimes_{\Oo} E \cong \rho_2 \otimes_{\Oo} E$. Let $M =\bmat A & B \\ C & D \emat\in \GL_2(E)$ be such that \be \label{pro}M \rho_1 = \rho_2 M.\ee Then an easy matrix calculation shows that \be \label{firsteq} Aa+Bc = aA +b_2 C\ee and \be \label{secondeq} Cb_1 + Dd = cB +dD\ee from which we get that $Cb_1 = Cb_2$. Suppose for the moment that  there exists $g \in G_{\Sigma}$ such that $b_1(g) \neq b_2(g)$. Since $\Oo$ is a domain we conclude that $C=0$. 
Since the representations $\rho_1$ and $\rho_2$ are irreducible over $E$, the function $c$ cannot be identically zero. Using (\ref{secondeq}) we conclude that $B=0$. Finally, computing the lower-left entries on both sides of (\ref{pro}) we get $Dc=cA$, so again using the fact $c$ is not identically zero and that $\Oo$ is a domain we get that $A=D$. Thus, $M$ is a non-zero scalar matrix. Hence we get a contradiction to our assumption on the existence of $g$ and we conclude that $b_1=b_2$. In particular $\rho_1$ and $\rho_2$ are identical deformations of $\tau$ which correspond to distinct minimal primes of $R_{\tau}^{0}$. Hence $\rho_1$ and $\rho_2$ give rise to two different homomorphisms from $R_{\tau}^{\rm red}$ to $\Oo$. This contradicts the bijectivity of the correspondence $$\Hom_{\Oo-{\rm alg}}(R_{\tau}^{\rm red}, \Oo) \leftrightarrow \{\textup{deformations of $\tau$ into $\Oo$}\}.$$ \end{proof}

\begin{thm} \label{deformtoRtr}
There exists a deformation $\rho_{\tau}^{\rm tr, 0}: G_{\Sigma} \to \GL_2(R_{\tau}^{\rm tr, 0})$ of $\tau$. The resulting canonical map $R_{\tau}^{\rm red} \to R_{\tau}^{\rm tr, 0}$ factors through $R_{\tau}^0$ and induces an isomorphism $R_{\tau}^0 \cong R_{\tau}^{\rm tr, 0}$. \end{thm}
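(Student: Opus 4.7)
The plan is to construct $\rho_\tau^{\rm tr, 0}$ via Urban's Theorem \ref{Urban 1.1}, identify its residual representation with $\tau$, and then combine universality of $R_\tau$ with a Carayol--Serre--Nyssen-type rigidity statement to force the inclusion $R_\tau^{\rm tr, 0}\hookrightarrow R_\tau^0$ to be an isomorphism.

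First, I apply Theorem \ref{Urban 1.1} with $\mB=R_\tau^{\rm tr, 0}$, $\mR=\mB[G_\Sigma]$, $\rho=\rho_\tau^0\otimes_{R_\tau^0}\mF$ (where $\mF$ is the common total ring of fractions, by Lemma \ref{fractions}), $\rho_i=\tilde\tau_i$, and $I=I_\tau^{\rm tr, 0}$. The trace of $\rho$ lies in $R_\tau^{\rm tr, 0}$ by definition, and since $p\neq 2$ the determinant is expressible via traces as $\det\rho(g)=(\tr\rho(g)^2-\tr\rho(g^2))/2$, giving hypothesis (i). Hypothesis (ii) is definitional, using Assumption \ref{ass1} to pin down the two sub-pseudo-characters as $\tr\tilde\tau_i$ exactly as in Lemma \ref{l2.6}. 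Hypotheses (iii)--(iv) are our standing assumptions, and absolute irreducibility of $\rho$ on each component of $\mF$ is Lemma \ref{irr4}. Urban's theorem then produces an $R_\tau^{\rm tr, 0}$-stable lattice $\mL\subset \mF^2$, giving $\rho_\tau^{\rm tr, 0}: G_\Sigma\to\GL_2(R_\tau^{\rm tr, 0})$.

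Next, I verify that the residual representation of $\rho_\tau^{\rm tr, 0}$ is $\tau$ itself. By Urban's construction (cf.\ Section \ref{lattice}), $\mL$ is the $R_\tau^{\rm tr, 0}$-submodule of $(\hat R_\tau^0)^2$ generated by $\rho(r)e_2$ for $r\in R_\tau^{\rm tr, 0}[G_\Sigma]$. Since $e_2\in (R_\tau^0)^2$ and $\rho_\tau^0$ preserves $(R_\tau^0)^2$, one has an $R_\tau^{\rm tr, 0}[G_\Sigma]$-equivariant inclusion $\mL\hookrightarrow (R_\tau^0)^2$. Reducing modulo $\fm_{R_\tau^{\rm tr, 0}}\subset\fm_{R_\tau^0}$ yields a $G_\Sigma$-equivariant map $\varphi: \mL/\fm_{R_\tau^{\rm tr, 0}}\mL\to (R_\tau^0)^2/\fm_{R_\tau^0}(R_\tau^0)^2 = \tau$. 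The image of $\varphi$ contains $e_2$ and is $G_\Sigma$-stable; as $\tau$ is non-split and hence generated by $e_2$ as a $G_\Sigma$-module, $\varphi$ is surjective. Both sides are two-dimensional over $\bfF$, so $\varphi$ is an isomorphism, confirming $\ov{\rho_\tau^{\rm tr, 0}}\cong \tau$.

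Finally, by universality of $R_\tau$, $\rho_\tau^{\rm tr, 0}$ induces an $\Oo$-algebra map $\alpha: R_\tau\to R_\tau^{\rm tr, 0}$; its image contains $\tr\rho_\tau^{\rm tr, 0}(\Frob_\fq)$ for $\fq\notin\Sigma$, which topologically generate $R_\tau^{\rm tr, 0}$, so $\alpha$ is surjective, and since $R_\tau^{\rm tr, 0}$ is reduced, $\alpha$ factors through $R_\tau^{\rm red}$. Let $\iota: R_\tau^{\rm tr, 0}\hookrightarrow R_\tau^0$ be the inclusion and $\beta: R_\tau\twoheadrightarrow R_\tau^0$ the canonical quotient (universal for $\rho_\tau^0$). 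Then $\iota\circ\alpha$ is the universality map for the deformation $\rho_\tau^{\rm tr, 0}\otimes_{R_\tau^{\rm tr, 0}} R_\tau^0$ of $\tau$ over $R_\tau^0$, which shares the trace $\tr\rho_\tau^0$ with $\beta$. Because $\tau$ has scalar centralizer (by the lemma following (\ref{form1})), a Carayol--Serre--Nyssen argument forces strict equivalence of these two deformations, and uniqueness in universality gives $\iota\circ\alpha=\beta$. Since $\beta$ is surjective, so is $\iota$; being injective, $\iota$ is an isomorphism, and the induced map $R_\tau^{\rm red}\twoheadrightarrow R_\tau^0\xrightarrow{\sim} R_\tau^{\rm tr, 0}$ is the desired factorization.

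The main obstacle is the second step: Urban's theorem a priori only produces some non-split extension of $\tau_2$ by $\tau_1$ on a possibly non-free lattice, and with $\dim_{\bfF} H^1_\Sigma(F,\Hom(\tau_2,\tau_1))>1$ there are many non-isomorphic extensions available. Overcoming this requires using that Urban's $\mL$ is built out of vectors of the form $\rho(r)e_2$ inside the particular lattice $(R_\tau^0)^2$, whose reduction is precisely $\tau$, to simultaneously pin down the extension class and force the rank to be $2$.
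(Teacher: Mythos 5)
Your overall strategy (invoke Theorem \ref{Urban 1.1}, show the lattice $\mL$ is free of rank $2$, then compare with $R_\tau^0$ via universality) is the same as the paper's, but two of the key steps contain gaps.

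\textbf{Step 2 is incomplete.} You assert that ``both sides are two-dimensional over $\bfF$,'' but you only establish surjectivity of $\varphi\colon \mL/\fm_{R_\tau^{\rm tr,0}}\mL \to \tau$. Nothing in your argument bounds $\dim_{\bfF}\mL/\fm_{R_\tau^{\rm tr,0}}\mL$ from above; a priori $\mL=\mT_\tau\oplus R_\tau^{\rm tr,0}$ with $\mT_\tau$ a rank-one but non-cyclic $R_\tau^{\rm tr,0}$-module, in which case $\mL/\fm\mL$ has dimension $>2$ and $\varphi$ fails to be injective (the inclusion $\mL\hookrightarrow(R_\tau^0)^2$ need not remain injective after reduction, since there is no reason $\mL\cap\fm_{R_\tau^0}(R_\tau^0)^2\subseteq\fm_{R_\tau^{\rm tr,0}}\mL$). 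You flag this as ``the main obstacle'' at the end, but the paragraph describing how to ``pin down the extension class and force the rank to be $2$'' is a sketch, not a proof. The paper handles this via Lemma \ref{gene}: it computes $\mL\otimes_{R_\tau^{\rm tr,0}}\bfF$ explicitly as the $\bfF$-span of the vectors $\ov\rho(r)e_2$ inside $\prod_{\rho_\pi\in\Phi_\tau}\bfF^2$, identifies this $\bfF[G_\Sigma]$-module with $\tau$ (hence $2$-dimensional), and only then deduces $\mT_\tau/\fm\mT_\tau=\bfF$, Nakayama cyclicity, and finally $\mT_\tau\cong R_\tau^{\rm tr,0}$. This is precisely the ingredient your argument is missing.

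\textbf{Step 3 invokes a false rigidity statement.} You claim that because $\tau$ has scalar centralizer, ``a Carayol--Serre--Nyssen argument forces strict equivalence'' of two deformations to $R_\tau^0$ with equal trace. For a residually reducible non-split $\tau$ with $\dim_{\bfF}H^1_\Sigma(F,\Hom(\tau_2,\tau_1))>1$, trace does \emph{not} determine the strict equivalence class: already over $\bfF[\epsilon]/\epsilon^2$, any cocycle $b$ for $\chi^{-1}$ whose class is not a multiple of the class of $f$ produces a strictly-upper-triangular cocycle $\sigma$ for $\ad\tau$ with $\tr(\tau(g)\sigma(g))\equiv 0$ and $[\sigma]\neq 0$, giving two non-equivalent deformations with identical trace. (This is exactly the failure of surjectivity of $R^{\rm ps}\to R_\tau$ that Remark \ref{baddef} is about.) The paper avoids this entirely: it obtains a surjection $\phi\colon R_\tau^0\twoheadrightarrow R_\tau^{\rm tr,0}$ from universality and the fact that traces of $\rho_\tau^0$ and $\rho_\tau^{\rm tr,0}$ agree, then observes that $\ker\phi$ is an $\Oo$-torsion submodule of the torsion-free ring $R_\tau^0$ because both rings have the same $\Oo$-rank (Lemma \ref{fractions}), hence $\ker\phi=0$. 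This bypasses any Carayol-type input. Your identification $\iota\circ\alpha=\beta$ could instead be proved by a direct lattice comparison once freeness of $\mL$ is established (one checks $\mL\cdot R_\tau^0=(R_\tau^0)^2$ by Nakayama), but the Carayol citation as written is not a valid justification.
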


\begin{proof}
We will (once again) apply Theorem \ref{Urban 1.1} (due to Urban). In the notation of section \ref{lattice} we will write $\mF_{\mB} = \mF$ to be total ring of fractions of $\mB=R_{\tau}^{\rm tr, 0} \subset \mF$.
 Note that by Lemma \ref{fractions}, $\mF$ is also the total ring of fractions of $R_{\tau}^{0}$. Moreover, we take $\mR = R_{\tau}^{\rm tr, 0}[G_{\Sigma}]$, $\rho = \rho_{\tau} \otimes_{\mB} \mF: G_{\Sigma} \to \GL_{2}(\mF)$  which induces a morphism $\rho: R_{\tau}^{\rm tr, 0}[G_{\Sigma}] \to M_{2}(\mF)$ of $R_{\tau}^{\rm tr, 0}$-algebras. As before, the representations denoted in Theorem \ref{Urban 1.1} by $\rho_1$ and $\rho_2$ are our unique lifts $\tilde{\tau}_1,\tilde{\tau}_2: G_{\Sigma} \to \GL_{2}(\Oo) \hookrightarrow \GL_{2}(R_{\tau}^{\rm tr, 0})$ and we set $I=I_{\tau}^{\rm tr, 0}$. Note that conditions (i) and (ii) of Theorem \ref{Urban 1.1} are satisfied respectively by the definition of $R_{\tau}^{\rm tr, 0}$ and of $I_{\tau}^{\rm tr, 0}$ and conditions (iii) and (iv) are satisfied by our assumptions on $\tau_1$ and $\tau_2$. Finally, the condition of irreducibility of $\rho$ is satisfied by Lemma \ref{irr4}.

Hence we conclude from Theorem \ref{Urban 1.1} that there exists an $R_{\tau}^{\rm tr, 0}[G_{\Sigma}]$-stable lattice $\mL \subset \mF^{2}$ and a finitely generated $R_{\tau}^{\rm tr, 0}$-module $\mT_{\tau}\subset \mF$ 
such that we have an exact sequence of $R^{\rm tr, 0}_{\tau}[G_{\Sigma}]$-modules:
\be \label{I1} 0 \to \tilde{\tau}_1\otimes_{\Oo} \mT_{\tau}/I_{\tau}^{\rm tr, 0} \mT_{\tau} \to \mL \otimes_{R_{\tau}^{\rm tr, 0}} R_{\tau}^{\rm tr, 0}/I_{\tau}^{\rm tr, 0} \to \tilde{\tau}_2 \otimes_{\Oo} R_{\tau}^{\rm tr, 0}/I_{\tau}^{\rm tr, 0} \to 0. \ee

 It follows from \cite{Urban01} Lemmas 1.1 and 1.5 that $\mL = \mT_{\tau} \oplus  R_{\tau}^{\rm tr, 0}$ as $ R_{\tau}^{\rm tr, 0}$-modules. We will now show that $\mT_{\tau}\cong R_{\tau}^{\rm tr, 0}$. Indeed, the lattice $\mL$ is defined as in section \ref{lattice}, but since we only work with a fixed residual representation $\tau$, the representation $\rho$ in Theorem \ref{Urban 1.1} equals the $\prod_{\rho_{\pi}\in \Phi_{\tau}}\rho_{\pi}$. 
Using Lemma \ref{gene} for this representation (i.e., when we replace $\rho_{\Pi}$ with $\rho$ as above), we conclude that the $G_{\Sigma}$-module $\mL \otimes_{R_{\tau}^{\rm tr, 0}}\bfF$ is the subspace of $\prod_{\rho_{\pi} \in \Phi_{\tau}} \bfF^2$ 
generated by $\ov{\rho}(r)e_2$ 
 (with notation as in that lemma). This subspace is clearly isomorphic to $\tau$ as a $G_{\Sigma}$-module. So, the middle term in (\ref{I1}) after tensoring with $\bfF$ is two-dimensional, hence we must have $\mT_{\tau}/I_{\tau}^{\rm tr, 0} \mT_{\tau} \otimes_{R_{\tau}^{\rm tr, 0}} \bfF = \mT_{\tau}/\fm \mT_{\tau} = \bfF$, where $\fm$ is the maximal ideal of $R_{\tau}^{\rm tr, 0}$. Thus, by Nakayama's Lemma, we see that $\mT_{\tau}$ is generated over $R_{\tau}^{\rm tr, 0}$ by one element, say $x$. Consider the surjective map $\phi: R_{\tau}^{\rm tr, 0} \to \mT_{\tau}$ given by $r \mapsto rx$. Let $a\in \ker \phi$. Then $a$ annihilates $\mT_{\tau}$. However, by definition of $\mT_{\tau}$ and the fact that  $R_{\tau}^{\rm tr, 0}$ embeds into its ring of fractions $\mF$ we can consider $x$ and $a$ as elements of $\mF = \prod E$. If $a=(a_1, a_2, \dots, a_s)$ and $x=(x_1, x_2, \dots, x_s)$ and $a_j\neq 0$, then $x_j=0$. However, this implies that $\mT\otimes \mF \neq \mF$, which contradicts the fact that $\mT_{\tau}$ is a lattice in $\mF$ (cf. Theorem \ref{Urban 1.1}). 
Hence $\phi$ is injective and we indeed have $\mT_{\tau} = R_{\tau}^{\rm tr, 0}$. Thus $\mL = (R_{\tau}^{\rm tr, 0})^2$ as an $R_{\tau}^{\rm tr, 0}$-module and $\rho$ gives rise to a representation $\rho_{\tau}^{\rm tr, 0}: G_{\Sigma} \to \GL_2(R_{\tau}^{\rm tr, 0})$. 

By the above it is clear that $\rho_{\tau}^{\rm tr, 0}$ reduces to $\tau$. Let us make sure that the resulting representation is crystalline. Indeed, the lattice $\mL$ lives inside the finite direct product of the representations $\rho_{\pi}$ for $\pi \in \Phi_{\tau}$ and each of the $\rho_{\pi}$ is crystalline. Hence as a submodule of a finite direct sum of crystalline representations $\rho_{\tau}^{\rm tr, 0}$ is crystalline. This proves the first assertion.

By universality of $R_{\tau}^{\rm red}$ we obtain an $\Oo$-algebra map $\phi: R_{\tau}^{\rm red} \to R_{\tau}^{\rm tr, 0}$. Since $R_{\tau}^{\rm tr, 0}$ is a subring of the direct product of finitely many copies of $\Oo$, the map $\phi$ clearly factors through $R_{\tau}^0$. Let us denote the induced map $R_{\tau}^0 \to R_{\tau}^{\rm tr, 0}$ also by $\phi$. We claim that $\phi$ is surjective. Indeed, by its definition $R^{\rm tr, 0}$ is generated by traces of $\rho_{\tau}^{0}$. So, it is enough to show that the traces of $\rho_{\tau}^{0}$ and $\rho_{\tau}^{\rm tr, 0}=\phi \circ \rho^{0}_{\tau}$ coincide. This follows from the construction of the lattice $\mL$ which is a $G_{\Sigma}$-stable lattice inside the representation $\rho_{\tau}^{0} \otimes \mF$. 

In particular, both representations $\rho_{\tau}^{0}$ and $\rho_{\tau}^{\rm tr, 0}$ are isomorphic after tensoring with $\mF$, hence they must have equal traces. 
Since both $R_{\tau}^{0}$ and $R_{\tau}^{\rm tr, 0}$ are finitely generated $\Oo$-modules with the same rings of fractions (Lemma \ref{fractions}), the kernel of $\phi$ must be a torsion $\Oo$-module. This implies that the kernel must be zero (as $R_{\tau}^{ 0}$ embeds into $\prod E$). This proves the second assertion.
\end{proof}

Suppose that $\tau$ is such that $\Pi_{\tau} \neq \emptyset$. 
Recall that by Corollary \ref{surj1prime} there exists a basis $\mB=\{e_1, e_2, \dots, e_s\}$ of $H^1_{\Sigma}(F, \Hom(\tau_2, \tau_1))$ such that $\Pi_{\tau^i} \neq \emptyset$ for all $i=1,2,\dots, s$, where $\tau^i$ denotes the representation corresponding to the extension $e_i$.  It is possible to find $i_0 \in \{1,2, \dots, s\}$ such that the set $\mB':=\mB \cup \{ e_{\tau}\} \setminus \{e_{i_0}\}$ is still a basis (and one still has that $\Pi_{\tau'}\neq \emptyset$ for all $\tau' \in \mB'$). Hence we can assume without loss of generality that $\mB=\{e_1, e_2, \dots, e_s\}$ with $\tau^1=\tau$.

\begin{thm} \label{modsurj} Suppose Assumptions \ref{ass1}, \ref{BK}, \ref{finiteness} and \ref{bound on T/J} are satisfied. Suppose moreover that the modular basis $\mB$ is projectively unique and that for each $i=1,2, \dots, s$ the corresponding ideal $J_{\tau^i}$ is principal. Then the map
$$R_{\tau}^{\rm tr, 0}/I_{\tau}^{\rm tr, 0} \twoheadrightarrow \bfT_{\tau}/J_{\tau}$$ is an isomorphism. \end{thm}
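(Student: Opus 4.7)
The plan is to use the cyclicity of both sides as $\Oo$-modules (Lemma \ref{cyclicity} and Remark \ref{r2.10}) to reduce the claim to an equality of orders, sandwiching them between the size of the Bloch-Kato Selmer group and the Hecke-side bound. Write $R_{\tau^i}^{\rm tr, 0}/I_{\tau^i}^{\rm tr, 0} \cong \Oo/\varpi^{a_i}$ and $\bfT_{\tau^i}/J_{\tau^i} \cong \Oo/\varpi^{b_i}$ for $i = 1,\dots,s$, with $\tau^1 = \tau$. The surjections from Corollary \ref{cyclicity2} give $a_i \geq b_i$ for each $i$; it will suffice to show $\sum_i a_i \leq \sum_i b_i$, since this forces equality componentwise, in particular $a_1 = b_1$.

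The lower bound $\sum_i b_i \geq v_\varpi(L)$ is exactly the content of Corollary \ref{exhaustive}, whose hypotheses (principality of each $J_{\tau^i}$, projective uniqueness of $\mB$, and finiteness of each $\bfT_{\tau^i}/J_{\tau^i}$) are part of the statement of the theorem.

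The upper bound $\sum_i a_i \leq v_\varpi(L)$ will follow from a parallel application of Urban's method on the deformation side. For each $i$, Theorem \ref{deformtoRtr} supplies a deformation $\rho_{\tau^i}^{\rm tr, 0} : G_\Sigma \to \GL_2(R_{\tau^i}^{\rm tr, 0})$ together with a lattice $\mL_i$ and an isomorphism $\mT_{\tau^i} \cong R_{\tau^i}^{\rm tr, 0}$. The resulting exact sequence (\ref{I1}), which splits as a sequence of $\Oo/\varpi^{a_i}$-modules, will produce, exactly as in the construction preceding Assumption \ref{bound on T/J} and the proof of Lemma \ref{iota43J}, an injection
\begin{equation*}
\iota_i' : \Hom_\Oo(R_{\tau^i}^{\rm tr, 0}/I_{\tau^i}^{\rm tr, 0},\, E/\Oo) \hookrightarrow H^1_\Sigma(F,\Hom_\Oo(\tilde\tau_2, \tilde\tau_1) \otimes_\Oo E/\Oo);
\end{equation*}
crystallinity of the image class holds because $\mL_i$ sits inside a finite direct sum of the crystalline representations $\rho_\pi$, $\pi \in \Phi_{\tau^i}$. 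A choice of generator of the cyclic source yields a class $c_i$ of order $\varpi^{a_i}$. Adapting the argument of Proposition \ref{modext1} to the single residual class $\tau^i$ (in place of the full modular set) and using Proposition 5.8 of \cite{BergerKlosin13} to identify the $\varpi$-torsion, the residue $\varpi^{a_i-1} c_i \in H^1_\Sigma(F,\Hom(\tau_2,\tau_1))$ is a nonzero scalar multiple of $e_i$.

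The main obstacle I anticipate is the following independence claim, needed to convert the individual orders $\varpi^{a_i}$ into the single bound $\sum_i a_i \leq v_\varpi(\# H^1_\Sigma)$: the cyclic submodules $\Oo c_1,\dots,\Oo c_s$ of $H^1_\Sigma(F,\Hom(\tilde\tau_2,\tilde\tau_1)\otimes E/\Oo)$ are $\Oo$-linearly independent. To establish this I will assume a relation $\sum_i \lambda_i c_i = 0$ with some $\lambda_i c_i \neq 0$ and set $K = \max\{a_i - 1 - v_\varpi(\lambda_i) : v_\varpi(\lambda_i) < a_i\}$; multiplying by $\varpi^K$ annihilates every term with $a_i - 1 - v_\varpi(\lambda_i) < K$, while each surviving term equals a unit multiple of $\varpi^{a_i-1} c_i$, hence a nonzero scalar multiple of $e_i$. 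The $\bfF$-independence of $\{e_1,\ldots,e_s\}$ then forces the surviving index set to be empty, i.e.\ $\lambda_i c_i = 0$ for every $i$. Granting independence, Assumption \ref{BK} yields $\sum_i a_i \leq v_\varpi(\# H^1_\Sigma) \leq v_\varpi(L) \leq \sum_i b_i$, completing the argument.
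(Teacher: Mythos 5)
Your proof is correct and follows essentially the same route as the paper's: apply Urban's lattice construction on the deformation side to produce, for each $i$, an injection of the cyclic module $\Hom_\Oo(R_{\tau^i}^{\rm tr,0}/I_{\tau^i}^{\rm tr,0}, E/\Oo)$ into the Bloch--Kato Selmer group, bound the total by Assumption \ref{BK}, and match against the lower bound $\#\prod_i \bfT_i/J_i \geq \#\Oo/L$ from Corollary \ref{exhaustive}; the surjections from Corollary \ref{cyclicity2} then force equality at every index. The one place where you add genuine content is the explicit independence argument for the cyclic submodules $\Oo c_1, \dots, \Oo c_s$: the paper compresses this into the phrase ``putting them together'' when passing from the individual bounds $\val_p(\#\mT_i/I_i^{\rm tr,0}\mT_i)$ to the bound against the Selmer group in (\ref{in76}), relying on the observation (made in the proof of Proposition \ref{l22}, and applicable here via Theorem \ref{deformtoRtr} and Lemma \ref{gene}) that the mod-$\varpi$ image of each $\iota_i$ is spanned by $e_i$. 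Your multiplication-by-$\varpi^K$ argument spells out exactly why those observations imply $\Oo$-independence, which is the step actually needed; it is a welcome clarification of a point the paper treats as implicit. The only minor stylistic difference is that you invoke $\mT_{\tau^i} \cong R_{\tau^i}^{\rm tr,0}$ from Theorem \ref{deformtoRtr} to work directly with $\Hom_\Oo(R_{\tau^i}^{\rm tr,0}/I_{\tau^i}^{\rm tr,0}, E/\Oo)$, whereas the paper keeps the lattice $\mT_\tau$ in the notation and records the inequality (\ref{mT}) separately; the two formulations are equivalent.
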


\begin{proof}
 In the proof of Theorem \ref{deformtoRtr} we showed the existence of an $R_{\tau}^{\rm tr, 0}$-algebra morphism $\rho :R_{\tau}^{\rm tr, 0}[G_{\Sigma}] \to M_2(\mF)$ and an  $R_{\tau}^{\rm tr, 0}[G_{\Sigma}]$-stable lattice $\mL \subset \mF^2$ together with a finitely generated $R_{\tau}^{\rm tr, 0}$-submodule $\mT_{\tau} \subset \mF$ such that we have an exact sequence of $R^{\rm tr, 0}_{\tau}[G_{\Sigma}]$-modules:
\be  \label{neq1} 0 \to \tilde{\tau}_1\otimes_{\Oo} \mT_{\tau}/I_{\tau}^{\rm tr, 0} \mT_{\tau} \to \mL \otimes_{R_{\tau}^{\rm tr,0}} R_{\tau}^{\rm tr, 0}/I_{\tau}^{\rm tr, 0} \to \tilde{\tau}_2 \otimes_{\Oo} R_{\tau}^{\rm tr, 0}/I_{\tau}^{\rm tr, 0} \to 0 \ee 
which splits as a sequence of $R_{\tau}^{\rm tr, 0}/I_{\tau}^{\rm tr, 0}$-modules.
As in section \ref{lattice} we get a map:
$$\iota: \Hom_{\Oo}(\mT_{\tau}/I_{\tau}^{\rm tr, 0} \mT_{\tau}, E/\Oo) \to H^1(F_{\Sigma}, \Hom(\tilde{\tau}_2, \tilde{\tau}_1)\otimes_{\Oo} E/\Oo).$$ 
The fact that $\iota$ is injective and that its image is contained in the Selmer group is proved in the same way as Lemma \ref{iota43J}. Tensoring (\ref{neq1}) with $\bfF$  and arguing as in the proof of Proposition \ref{l22}  (this time using that $R_{\tau}^{\rm tr,0}/I_{\tau}^{\rm tr, 0}$ is cyclic by Remark \ref{r2.10}) we see that $\mT_{\tau}/I_{\tau}^{\rm tr, 0}\mT_{\tau}$ is cyclic and that 
\be \label{mT} \val_p(\#\mT_{\tau}/I_{\tau}^{\rm tr, 0} \mT_{\tau}) \geq \val_p(\#R_{\tau}^{\rm tr, 0}/I_{\tau}^{\rm tr, 0}).\ee

 Using the above arguments for the rings $R^{\rm tr, 0}_i$ corresponding to the residual representation arising from $e_i$, and putting them together we obtain 
\be \label{in76} \val_p(\#\prod_{i=1}^s R^{\rm tr, 0}_{i}/I^{\rm tr, 0}_{i}) \leq \val_p(\#\prod_{i=1}^s \mT_{i}/I^{\rm tr, 0}_{i}\mT_{i}) \leq \val_p(\# H^1_{\Sigma}(F, \Hom(\tilde{\tau}_2, \tilde{\tau}_1)\otimes E/\Oo))\ee (the first inequality comes from (\ref{mT})).
The assumption in Corollary \ref{exhaustive} that $\bfT_i/J_i$ are finite is satisfied since this is true for $R^{\rm tr, 0}_{i}/I^{\rm tr, 0}_{i}=R^0_{\tau}/I^0_{\tau}$ by Theorem \ref{deformtoRtr} and Lemma \ref{finiteq}.
Combining Assumptions \ref{BK}, \ref{bound on T/J} with 
Corollary \ref{exhaustive}
 we obtain that \be \label{in77} \val_p(\# H^1_{\Sigma}(F, \Hom(\tilde{\tau}_2, \tilde{\tau}_1)\otimes E/\Oo))
 \leq \val_p(\#\prod_{i=1}^s \bfT_{i}/J_{i}).\ee
Combining (\ref{in76}) with (\ref{in77}) we conclude that the maps $$R^{\rm tr, 0}_{i}/I^{\rm tr, 0}_{i} \twoheadrightarrow \bfT_{i}/J_{i}$$ must be isomorphisms for every $i=1,2,\dots, s$. This completes the proof of Theorem \ref{modsurj}. \end{proof}

\section{Imaginary quadratic case} \label{section4}

For the remainder of the article we specialize to the case $n=2$, $F$ imaginary quadratic, $\tau_1=1$ and $\tau_2=\chi: G_{\Sigma} \to \bfF^{\times}$ an anticyclotomic character 
(i.e., we assume that $\chi(cgc) = \chi(g^{-1})$ for all $g \in G_{\Sigma}$ and $c\in G_{\bfQ}$ a complex conjugation). 
 Recall again our assumption \ref{finiteness} that $\mP(\tau)$ is finite (so that $R^0_{\tau}$ is defined and finitely generated as an $\Oo$-module).

\subsection{Principality of the ideal of reducibility} \label{Principality section}

 Let $f \in H^1_{\Sigma}(F, \chi^{-1})$ be non-zero.  Set $\tau = \bmat 1 & \chi  f \\ & \chi \emat$ with universal deformation ring $R_{\tau}$. 
\begin{prop} \label{prin} Suppose Assumption \ref{ass1} is satisfied and write $\Psi$ for $\tilde{\tau}_2$. 
Then the ideal of reducibility $I_{\tau}^{0}$ of $R_{\tau}^{0}$ is principal.  \end{prop}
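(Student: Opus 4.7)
My plan is to realize $R_{\tau}^{0}$ with a generalized matrix algebra (GMA) structure via the universal deformation $\rho_{\tau}^{0}\colon G_{\Sigma}\to\GL_{2}(R_{\tau}^{0})$ furnished by Theorem \ref{deformtoRtr}, to identify $I_{\tau}^{0}$ as a product of two off-diagonal ideals, and then to show each factor is principal. Choosing $\sigma_{0}\in G_{\Sigma}$ with $\chi(\sigma_{0})\not\equiv 1\pmod{\varpi}$, Hensel's lemma lifts the distinct residual eigenvalues of $\tau(\sigma_{0})$ to $R_{\tau}^{0}$, and conjugating I may assume $\rho_{\tau}^{0}(\sigma_{0})$ is diagonal. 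Arguing as in the proof of Lemma \ref{fractions}, the entries of $\rho_{\tau}^{0}(g)=\begin{pmatrix}a(g)&b(g)\\ c(g)&d(g)\end{pmatrix}$ all lie in $R_{\tau}^{0}$, with $a\equiv 1$, $d\equiv\chi$, $b\equiv\chi f$, $c\equiv 0\pmod{\fm}$. Let $B$ and $C$ be the $R_{\tau}^{0}$-ideals generated by the values of $b$ and $c$, respectively. The reducibility formalism of Bella\"iche--Chenevier (\cite{BellaicheChenevierbook}, \S 1.5) then gives $I_{\tau}^{0}=B\cdot C$, so it suffices to prove $B$ and $C$ are each principal.

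For $B$, the multiplicativity of $\rho_{\tau}^{0}$ yields the cocycle relation $\bar b(g_{1}g_{2})=\bar b(g_{2})+\chi(g_{1})\bar b(g_{1})$ in $B/\fm B$, so (using crystallinity and unramifiedness outside $\Sigma$ of $\rho_{\tau}^{0}$) $\bar b/\chi$ defines a class in $H^{1}_{\Sigma}(F,\chi^{-1})\otimes_{\bfF} B/\fm B$, producing an injection $(B/\fm B)^{\vee}\hookrightarrow H^{1}_{\Sigma}(F,\chi^{-1})$ by minimality of $B$ as the shoulder ideal. Each $\pi\in\Phi_{\tau}$ gives a functional $\phi_{\pi}$ on $B/\fm B$ coming from the projection $R_{\tau}^{0}\hookrightarrow\prod_{\pi}\Oo$, and $\phi_{\pi}(\bar b/\chi)$ recovers (up to a scalar) the fixed extension class $f$ representing $\tau$, because the mod-$\varpi$ reduction of the upper-right shoulder of $\rho_{\pi}$ is $\chi f$. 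Since the $\phi_{\pi}$ together separate $B/\fm B$ (using the injectivity of $R_{\tau}^{0}\hookrightarrow\prod_{\pi}\Oo$ and $\Oo$-flatness to descend to $B/\fm B$), the image of the injection lies in the line $\bfF\cdot f$, so $\dim_{\bfF}B/\fm B\le 1$ and Nakayama gives $B=(b_{0})$ principal.

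For $C$ the analogous construction produces an injection $(C/\fm C)^{\vee}\hookrightarrow H^{1}_{\Sigma}(F,\chi)$, but now there is no fixed target class constraining the image. Here I would exploit the anticyclotomic identity $\chi^{c}=\chi^{-1}$: assembling the contragredient $\rho\mapsto\rho^{\vee}$, the twist $\otimes\tilde\chi$ (so that the residual semisimplification $1\oplus\chi$ is preserved), the complex-conjugation action $\rho\mapsto\rho^{c}$ (which, via $\chi^{c}=\chi^{-1}$, returns the semisimplification to $1\oplus\chi$), and the $w$-conjugation restoring the upper-triangular normal form of $\tau$, one obtains an involution on the deformation problem for $\tau$ whose induced automorphism $\sigma$ of $R_{\tau}^{0}$ interchanges $B$ and $C$ up to units. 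Principality of $B$ then transports to principality of $C$, and $I_{\tau}^{0}=B\cdot C=(b_{0}c_{0})$ is principal.

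The main obstacle is the last step: each of the naive candidate involutions on its own takes the upper-triangular $\tau$ either to a lower-triangular extension of the same semisimplification, or replaces the extension class $f$ by $f^{c}\in H^{1}_{\Sigma}(F,\chi)$; one must carefully compose the contragredient, the twist, the $c$-conjugation and the $w$-conjugation so that the composite genuinely preserves the upper-triangular form and the class $f$, and then verify that this composite descends to an $\Oo$-algebra involution of $R_{\tau}^{0}$ that swaps $B$ with $C$. This is where the anticyclotomic hypothesis and the imaginary quadratic structure are essential.
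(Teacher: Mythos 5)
Your plan hinges on the decomposition $I_{\tau}^{0}=B\cdot C$ together with an involution interchanging $B$ and $C$, but this cannot work in the normalization you have chosen. Since $\rho_{\tau}^{0}$ \emph{is} a homomorphism into $\GL_{2}(R_{\tau}^{0})$ that reduces to $\tau=\bigl(\begin{smallmatrix}1 & \chi f\\ 0 & \chi\end{smallmatrix}\bigr)$ with $f\neq 0$, the $(1,2)$-entry $b$ is a unit at some $g$, so $B=R_{\tau}^{0}$; meanwhile $c\equiv 0\pmod{\fm}$, so $C\subset\fm$. Thus $I_{\tau}^{0}=BC=C$ and your ``$B$ is principal'' step is vacuous, while ``$C$ is principal'' is exactly the original problem. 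More seriously, no automorphism of $R_{\tau}^{0}$ coming from a symmetry of the deformation functor for $\tau$ can interchange $B$ and $C$: any such symmetry must preserve the residual isomorphism class, hence the fact that the upper-right shoulder is non-zero mod $\fm$ and the lower-left one is zero, i.e.\ it must send a unit ideal to a unit ideal. If you literally compose contragredient, twist by $\det$, and $w$-conjugation you get $\rho\mapsto\bigl(\begin{smallmatrix}a & -b\\ -c & d\end{smallmatrix}\bigr)$, which fixes both $B$ and $C$; inserting $c$-conjugation and returning to upper-triangular form produces the opposite orientation $\bigl(\begin{smallmatrix}1 & 0\\ * & \chi\end{smallmatrix}\bigr)$, which is a \emph{different} deformation problem, not an automorphism of yours.

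The paper's argument uses the anticyclotomic symmetry in a genuinely different way. It does not look for an involution of $R_{\tau}^{0}$; instead it constructs a second representation $\rho_{\tau}^{0,\mathrm{opp}}(g)=\Psi(g)\,\rho_{\tau}^{0}(cgc)$ with values in the \emph{same} ring $R_{\tau}^{0}$ (Lemma \ref{aux}), whose reduction is the non-split opposite extension $\bigl(\begin{smallmatrix}\chi & *\\ 0 & 1\end{smallmatrix}\bigr)$ --- non-splitness coming from the isomorphism $H^{1}_{\Sigma}(F,\chi^{-1})\cong H^{1}_{\Sigma}(F,\chi)$ given by $a\mapsto a^{c}$ --- and which has the same image $\rho_{\tau}^{0}(R_{\tau}^{0}[G_{\Sigma}])=\rho_{\tau}^{0,\mathrm{opp}}(R_{\tau}^{0}[G_{\Sigma}])$ and the same ideal of reducibility. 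Feeding the pair $(\rho_{\tau}^{0},\rho_{\tau}^{0,\mathrm{opp}})$ into Propositions 1.7.4 and 1.7.5 of Bella\"iche--Chenevier (with generic irreducibility replaced by $\ker\rho=\ker T$, available since $R_{\tau}^{0}/I_{\tau}^{0}$ is finite) yields principality directly. So your instinct that $c$-conjugation and anticyclotomy are the essential input is correct, but the mechanism has to be ``produce the opposite-orientation representation and apply the Bella\"iche--Chenevier criterion'', not ``build an automorphism swapping the off-diagonal ideals''.
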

We begin with the following lemma.

\begin{lemma} \label{aux} There exists $\rho_{\tau}^{\rm 0,  opp}: G_{\Sigma} \to \GL_2(R^{0}_{\tau})$ such that $\ov{\rho}_{\tau}^{\rm 0, opp} = \bmat \chi & *\\  & 1\emat$ and is non-split. \end{lemma}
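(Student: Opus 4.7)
The plan is to construct $\rho_{\tau}^{\rm 0, opp}$ explicitly as an anticyclotomic twist of the universal deformation $\rho_{\tau}^{0}$. Fix a complex conjugation $c \in G_{\bfQ}$; since $\Sigma$ is assumed stable under $\Gal(F/\bfQ)$ (the standard setup for anticyclotomic problems), conjugation by $c$ induces an involutive automorphism of $G_{\Sigma}$. Let $\Psi = \tilde{\tau}_2$ be the unique lift of $\chi$ to $\Oo^{\times}$ guaranteed by Assumption \ref{ass1}, and view it as scalar matrices in $\GL_2(R_{\tau}^{0})$. I would then set
$$\rho_{\tau}^{\rm 0, opp}(g) := \Psi(g) \cdot \rho_{\tau}^{0}(cgc^{-1}).$$
Multiplicativity is immediate from the fact that $\Psi(g)$ is central in $\GL_2(R_{\tau}^{0})$, so this defines a group homomorphism valued in $\GL_2(R_{\tau}^{0})$.

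To identify the reduction, I would apply the anticyclotomic identity $\chi(cgc^{-1}) = \chi(g)^{-1}$ to $\tau(cgc^{-1})$: writing $f^c(g) := f(cgc^{-1})$, one obtains
$$\ov{\rho_{\tau}^{0}(cgc^{-1})} = \bmat 1 & \chi(g)^{-1} f^c(g) \\ 0 & \chi(g)^{-1} \emat,$$
and multiplying by $\chi(g) = \ov{\Psi(g)}$ yields
$$\ov{\rho_{\tau}^{\rm 0, opp}}(g) = \bmat \chi(g) & f^c(g) \\ 0 & 1 \emat,$$
which has the desired form.

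Finally, for non-splitness, a short cocycle calculation shows that $f^c$ satisfies the cocycle identity for $Z^{1}(G_{\Sigma}, \bfF(\chi))$ and that $f \mapsto f^c$ descends to an $\bfF$-linear isomorphism $H^{1}(G_{\Sigma}, \bfF(\chi^{-1})) \xrightarrow{\sim} H^{1}(G_{\Sigma}, \bfF(\chi))$, whose inverse is given by the same formula since $c^2 = 1$. Hence $f \neq 0$ forces $f^c$ to be non-trivial in cohomology, so $\ov{\rho_{\tau}^{\rm 0, opp}}$ is indeed a non-split extension. I do not anticipate any real obstacles; the only point requiring care is the well-definedness of conjugation by $c$ on $G_{\Sigma}$, which is precisely the standard hypothesis on $\Sigma$ in the anticyclotomic setup.
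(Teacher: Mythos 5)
Your proof is correct and follows essentially the same approach as the paper: both conjugate $\rho_{\tau}^{0}$ by complex conjugation, twist by $\Psi$, and use the anticyclotomic property to identify the reduction, then use the isomorphism $H^1(F_\Sigma,\bfF(\chi^{-1})) \cong H^1(F_\Sigma,\bfF(\chi))$ induced by $c$ to get non-splitness (you verify this isomorphism with an explicit cocycle computation where the paper cites Lemma 7.1.5 of \cite{Berger05}, but that is an inessential difference).
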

\begin{proof} Let $c$ be the complex conjugation. Define $\rho'_{\tau}$ by $$\rho'_{\tau}(g) = \rho_{\tau}^{0}(cgc).$$ Then $$\ov{\rho}'_{\tau} = \bmat 1 & \chi(cgc) f(cgc) \\ & \chi(cgc) \emat = \bmat 1 & \chi^{-1}(g) f(cgc) \\ & \chi^{-1}(g)\emat.$$ If this is split, then $f'\in H^1_{\Sigma}(F, \chi)$ defined by $f'(g) = f(cgc)$ is the zero cohomology class. However, $a \mapsto a^c$ gives an isomorphism of $H^1_{\Sigma}(F, \chi^{-1})$ onto  $H^1_{\Sigma}(F, \chi)$ (cf. Lemma 7.1.5 in \cite{Berger05}), so $f' \neq 0$ since $f \neq 0$. Now set $\rho_{\tau}^{\rm 0, opp} = \rho'_{\tau} \otimes \Psi$.
\end{proof}
\begin{proof} [Proof of Proposition \ref{prin}]
We first note that the ideal of reducibility of ${\rm tr} \rho_{\tau}^{0. \rm opp}$ 
equals $I_{\tau}^0$.
 By Proposition 3.1 in \cite{BergerKlosin13} we know that $\ker \rho^0_{\tau} = \ker \tr \rho^0_{\tau}$ and $\ker \rho_{\tau}^{0, \rm opp} = \ker \tr \rho_{\tau}^{0, \rm opp}$ 
since $R_{\tau}^{0}/I_{\tau}^{0}$ is finite by Lemma \ref{finiteq}.
Noting that $\rho^0_{\tau}( R_{\tau}^{0}[G_{\Sigma}])=\rho^{\rm 0, opp}_{\tau}( R_{\tau}^{0}[G_{\Sigma}])$ 
we therefore have an isomorphism $$R_{\tau}^{0}[G_{\Sigma}]/{\rm ker} \,  {\rm tr} \rho^0_{\tau} \cong R_{\tau}^{0}[G_{\Sigma}]/{\rm ker} \,  {\rm tr} \rho^{0, \rm opp}_{\tau}.$$ This means that applying \cite{BellaicheChenevierbook}  Proposition 1.7.4 with $\rho^0_{\tau}$ and $\rho^{0, \rm opp}_{\tau}$ establishes condition (1) in  \cite{BellaicheChenevierbook}  Proposition 1.7.5, so we can conclude that $I^0_{\tau}$ is principal. (Note that
the generic irreducibility assumption in the propositions in \cite{BellaicheChenevierbook} can be replaced by ${\rm ker} \rho= {\rm ker} T$). \end{proof}

\begin{cor} \label{Jprin} Suppose Assumptions \ref{ass1}, \ref{BK},  \ref{finiteness} and \ref{bound on T/J} are satisfied. 
Then the ideal $J_{\tau}$  of $\bfT_{\tau}$ is principal. \end{cor}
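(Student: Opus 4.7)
The plan is to transport principality from $R_{\tau}^{0}$ to $\bfT_{\tau}$ along the two surjections
\[
R_{\tau}^{0} \;\xrightarrow{\;\sim\;}\; R_{\tau}^{\rm tr,0} \;\xrightarrow{\;\phi_{\tau}\;}\; \bfT_{\tau},
\]
the first being the isomorphism provided by Theorem \ref{deformtoRtr} and the second being the defining Hecke quotient of Definition \ref{def hecke}.

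First, I would invoke Proposition \ref{prin}, whose hypotheses are exactly Assumption \ref{ass1} together with the standing hypothesis that $\Phi_{\tau}$ (equivalently $\mP(\tau)$) is finite, to conclude that the ideal of reducibility $I_{\tau}^{0}\subset R_{\tau}^{0}$ is principal. Next, I would observe that under the isomorphism $R_{\tau}^{0}\cong R_{\tau}^{\rm tr,0}$ of Theorem \ref{deformtoRtr}, the traces of $\rho_{\tau}^{0}$ and $\rho_{\tau}^{\rm tr,0}$ correspond (as shown in the proof of that theorem: both lattices sit inside $\mF^{2}$ and have equal traces after tensoring with $\mF$). Since the ideal of reducibility is defined purely in terms of the trace pseudo-representation, the isomorphism identifies $I_{\tau}^{0}$ with $I_{\tau}^{\rm tr,0}$. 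Therefore $I_{\tau}^{\rm tr,0}$ is principal.

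Finally, I would apply Corollary \ref{cyclicity2}, which asserts the identity $J_{\tau}=\phi_{\tau}(I_{\tau}^{\rm tr,0})$. Since the image of a principal ideal under a surjective ring homomorphism is principal, the corollary follows. The hypotheses \ref{BK}, \ref{finiteness} and \ref{bound on T/J} enter only through their role in making Theorem \ref{deformtoRtr} (and the definitions of $R^0_\tau$, $R^{\rm tr,0}_\tau$, $I^{\rm tr,0}_\tau$, $J_\tau$) available; once the isomorphism $R_{\tau}^{0}\cong R_{\tau}^{\rm tr,0}$ is in hand, the argument is formal.

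There is no real obstacle here; the substance of the work has already been carried out in Proposition \ref{prin} (principality on the deformation side) and Theorem \ref{deformtoRtr} (the identification $R_{\tau}^{0}\cong R_{\tau}^{\rm tr,0}$, which is where Urban's lattice construction was used). The only point requiring a moment's care is the compatibility of the two reducibility ideals under the isomorphism of Theorem \ref{deformtoRtr}, but this is immediate from the trace-theoretic definition of the reducibility ideal.
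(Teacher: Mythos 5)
Your proof is correct and follows exactly the same route as the paper's: apply Proposition \ref{prin} for principality of $I_{\tau}^{0}$, transport it to $I_{\tau}^{\rm tr,0}$ via Theorem \ref{deformtoRtr}, and push forward along the surjection $\phi_{\tau}$ using Corollary \ref{cyclicity2}. The paper's proof is a one-line summary of precisely this; your version just spells out the intermediate identification.
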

\begin{proof} Let $\phi_{\tau}: R_{\tau}^{\rm tr, 0} \twoheadrightarrow \bfT_{\tau}$ be the canonical surjection. By Corollary \ref{cyclicity2} we know that $\phi(I_{\tau}^{\rm tr, 0})=J_{\tau}$. The claim follows from combining Theorem \ref{deformtoRtr} with Proposition \ref{prin}. \end{proof}

\begin{rem}
For other fields $F$ (e.g. CM fields), principality of the ideal of reducibility would hold for conjugate self-dual representations  (see Theorem 2.11 of \cite{BergerKlosin13}). 
\end{rem}

\subsection{Selmer groups}
In this subsection we discuss Assumptions \ref{ass1} and \ref{BK} for certain characters $\chi$, for which we will prove our main results.

\begin{example} \label{Eimag}
Fix a prime $\fp$  lying over $p$ 
and denote by $i_{\fp}$ the fixed embedding $\ov{F} \hookrightarrow \ov{F}_{\fp}$ and $i_{\infty}: \ov{F} \hookrightarrow \bfC$. Let $\tau_2=\chi$ be a $p$-adic Galois character of the following form: Let
$\phi_1$, $\phi_2$ be two Hecke characters of infinity types $z$ and $z^{-1}$ respectively, and set $\phi=\phi_1/\phi_2$. Assume that $\Sigma$ contains the set $S_{\phi}$ of 
primes dividing $\fM_1 \fM_2 \fM_1^c \fM_2^c\textup{d}_Fp$, where $\fM_i$ denotes the conductor of $\phi_i$.  Let $\phi_{\fp}:G_{\Sigma} \to \Oo^*$ denote the
$\fp$-adic Galois character corresponding to $\phi$ defined by $\phi_{\fp}({\rm Frob}_{\fq})=i_{\fp}(i_{\infty}^{-1}(\phi(\varpi_{\fq})))$ for $\fq \notin \Sigma$. Set $\Psi:= \phi_{\fp} \epsilon$,
where $\epsilon$ is the $p$-adic cyclotomic character, and $\chi=\ov{\Psi}$. Assume also that if $\fq \in \Sigma$, then $\# k_{\fq} \not\equiv 1$ (mod $p$).
Under this assumption Assumption \ref{ass1} will be satisfied by Corollary 9.7 in \cite{BergerKlosin13}. 

 Let $L^{\rm int}(0,\phi)$ be the special $L$-value attached to $\phi$ as in \cite{BergerKlosin09}. Write $W$ for $\Hom_{\Oo}(\Psi, 1)\otimes E/\Oo$. In this case we adapt 
Assumption \ref{BK} as follows:
 \begin{conj} \label{BK1} $\# H^1_f(F, W) \leq \#
\Oo/\varpi^m$, where $m=\val_{\varpi}(L^{\tuint}(0, \phi))$.\end{conj}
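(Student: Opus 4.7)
The plan is to derive the upper bound on $H^1_f(F,W)$ from the Iwasawa-theoretic machinery of Rubin, specialized to Selmer groups of Hecke characters of imaginary quadratic fields via the Euler system of elliptic units. A version of such a bound (in the setup relevant here, with the period normalization from \cite{BergerKlosin09}) was in fact established in the first author's thesis \cite{Berger05} and I would build the argument on that framework.

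First I would unwind the representation. Since $\tilde{\tau}_1 = 1$ and $\tilde{\tau}_2 = \Psi = \phi_{\fp} \epsilon$, one has $W = \Hom_{\Oo}(\Psi, 1) \otimes E/\Oo \cong \Psi^{-1} \otimes E/\Oo = \phi_{\fp}^{-1} \epsilon^{-1} \otimes E/\Oo$, which up to the Tate twist is the Galois representation of a Hecke character of infinity type $z^{-2}$. Thus $H^1_f(F,W)$ is precisely a Bloch--Kato Selmer group attached to $\phi$, and the claim is a Bloch--Kato-style inequality relating it to the algebraic special value $L^{\textup{int}}(0,\phi)$.

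Next I would match the local conditions used to define $H^1_f$ with the Greenberg-style local conditions appearing in Rubin's two-variable main conjecture. At the primes $\fp \mid p$ the crystalline (Fontaine--Lafaille) condition of \cite{BergerKlosin13} can be compared, in the short Hodge--Tate range occupied by $\phi$, with the Greenberg/ordinary filtration defined by the $\fp$-adic decomposition of $\phi$; when $p$ splits in $F/\bfQ$ this comparison is classical and yields agreement of the two Selmer groups. Away from $p$ the conditions are the same relaxed-at-$\Sigma$ conditions, so no modification is needed. With the identification in hand, I would then apply Rubin's divisibility, obtained from the Euler system of elliptic units, which bounds the characteristic ideal of the Selmer group by the Katz $p$-adic $L$-function. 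Specializing at the appropriate point and using the interpolation property of Katz's $L$-function gives $\#H^1_f(F,W) \leq \#\Oo/L_p$, where $L_p$ is the $p$-adic incarnation of $L(0,\phi)$.

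The main obstacle, and the place where care is required, is the comparison of the algebraic integral $L$-value $L^{\textup{int}}(0,\phi)$ used in \cite{BergerKlosin09} with the value $L_p$ coming out of Katz's interpolation formula: this is a matter of matching complex and $p$-adic periods (CM periods of an auxiliary elliptic curve with complex multiplication by $\OF$), together with checking the Euler factors at primes in $\Sigma \setminus \{\fp\}$ where the two normalizations might differ. A secondary obstacle is the case when $p$ is inert or ramified in $F$: Rubin's two-variable main conjecture is cleanest in the split case, and for inert/ramified $p$ one needs either the more recent results of Hsieh or to impose the additional hypothesis that $p$ splits in $F$ (which also streamlines the Fontaine--Lafaille analysis elsewhere in the paper).
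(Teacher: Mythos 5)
The statement you have attempted to prove is labeled a \emph{Conjecture} in the paper, not a theorem, and the paper supplies no proof of it; it is adopted as a working hypothesis. What the paper does provide is Remark~\ref{rBK1}, which sketches how the inequality \emph{can in many cases be deduced} from Rubin's proof of the Iwasawa Main Conjecture for imaginary quadratic fields, and only under an additional hypothesis: that $\phi^{-1}=\psi^2$ for $\psi$ a Hecke character attached to a CM elliptic curve. Under that hypothesis the remark cites Proposition~4.4.3 of \cite{Deepreprint} together with the isomorphism $H^1_f(F,W)\cong H^1_f(F,W^c)$ to identify $\#H^1_f(F,W)$ with $\#H^1_f(F,E/\Oo(\phi_{\fp}^{-1}))$, and then applies Corollary~4.3.4 of \cite{Deepreprint} combined with the functional equation of $L(0,\phi)$.

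Your proposal travels the same road the remark points down --- Rubin's main conjecture via the Euler system of elliptic units, matching the Fontaine--Laffaille/crystalline local condition at $\fp\mid p$ with a Greenberg-ordinary one, and reconciling the Katz $p$-adic $L$-value with the complex period normalization of \cite{BergerKlosin09} --- and you correctly flag the two genuine technical obstacles (period/Euler-factor comparison, and the inert/ramified-$p$ case). But you leave precisely those steps unresolved, whereas the paper's remark sidesteps some of them by reducing, under the extra hypothesis $\phi^{-1}=\psi^2$, to a dual Selmer group where Dee's results and the functional equation apply directly. In other words, you have not contradicted the paper, but you also have not proved the conjecture: your sketch is roughly at the same level of rigor as the paper's own remark and would need the period comparison and the split/inert distinction worked out in detail (and possibly the auxiliary hypothesis on $\phi$) before it could be called a proof.
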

Note that this conjecture implies Assumption \ref{BK} for $\Sigma=\Sigma_p=\{\fp \mid p\}$. However, our conclusions hold for all sets $\Sigma \supset \Sigma_p$ for which $H^1_{\Sigma} = H^1_f$ 
(see Lemma 5.6 of \cite{BergerKlosin13}).

\begin{rem} \label{rBK1} Conjecture \ref{BK1} can in many
cases be deduced from the Main conjecture proven by Rubin \cite{Rubin91}. If
$\phi^{-1}=\psi^2$ for $\psi$ a Hecke character associated to a CM elliptic
curve, then one can argue as follows. By Proposition 4.4.3 in \cite{Deepreprint} and using that $H^1_f(F, W) \cong H^1_f(F, W^c)$, we have
$\# H^1_f(F, W)= \#H^1_f(F,E/\Oo(\phi_{\fp}^{-1}))$.  Thus we can use Corollary 4.3.4 in
\cite{Deepreprint} which together with the functional equation satisfied by $L(0,\phi)$ implies the desired inequality. \end{rem}

\end{example}

\subsection{Link of rings $\bfT_{\tau}$ to an actual Hecke algebra} \label{s43} In this section we recall from \cite{B09} an Eisenstein ideal bound for a Hecke algebra $\bfT(\Sigma)$ acting on cuspidal automorphic forms. We also recall results about Galois representations associated to such forms and use this to  relate $\bfT(\Sigma)$ to the ring $\bfT$ defined in Section \ref{The rings Ttau}.

Continuing with the notation of Example \ref{Eimag} we assume $\phi=\phi_1/\phi_2$ is unramified. For an ideal
$\mathfrak{N}$ in $\mathcal{O}_F$ and a finite place ${\fq}$ of $F$
put $\mathfrak{N}_{\fq}=\mathfrak{N} \mathcal{O}_{F,{\fq}}$. We
define
$$U^1(\mathfrak{N}_{\fq})=\{k \in {\rm GL}_2(\mathcal{O}_{F,{\fq}})\mid
{\rm det}(k) \equiv 1 \mod{\mathfrak{N}_{\fq}} \}.$$
Now put
\be
\label{ourcompact} K_{f}:=\prod_{{\fq} \mid \fM_1}
U^1(\mathfrak{M}_{1,{\fq}}) \subset
\Res_{F/\bfQ} \GL_2(\bfA_{f}).\ee
Denote by $S_2(K_{f})$ the space of
cuspidal
automorphic forms of $\Res_{F/\bfQ} \GL_2(\AQ)$ of weight 2, right-invariant under
$K_{f}$ (for more details see Section 3.1 of \cite{Urban95}). Put $\gamma=\phi_1 \phi_2$ and
write $S_2(K_{f}, \gamma)$ for
the forms with central character $\gamma$.

From now on, let $\Sigma$ be a finite set of places of $F$ containing
$$S_{\phi}:=\{{\fq} \mid \fM_1 \fM_1^c \} \cup \{{\fq}
\mid p d_F\}.$$

We denote by $\bfT(\Sigma)$ the $\Oo$-subalgebra of
$\End_{\Oo}(S_2(K_{f},\gamma))$
generated by the Hecke operators
$T_{\fq}$
for all places $\fq \not\in \Sigma$.

Let $J(\Sigma) \subset \bfT(\Sigma)$ be the ideal generated by
$$\{T_{\fq} - \phi_1(\varpi_{\fq}) \cdot \# k_{\fq}-
\phi_2(\varpi_{\fq}) \mid \fq \not\in \Sigma \}.$$

\begin{definition}
Denote by
$\fm(\Sigma)$ a
maximal ideal of $\bfT(\Sigma)$ containing the image of
$J(\Sigma)$. We set $\bfT_{\Sigma}:=
\bfT(\Sigma)_{\fm(\Sigma)}$. Moreover, set $J_{\Sigma}:=
J(\Sigma) \bfT_{\Sigma}$. We refer to $J_{\Sigma}$ as the
\emph{Eisenstein ideal of $\bfT_{\Sigma}$}.
\end{definition}

\begin{thm} [\cite{B09}
Theorem 14] \label{Eiscong} Let $p>3$ and assume $\ell \not \equiv \pm 1 \mod{p}$ for $\ell \mid d_F$. Let $\phi$ be an unramified Hecke
character of infinity type $\phi^{(\infty)}(z)=z^2$. There exist
Hecke characters $\phi_1, \phi_2$ with $\phi_1/\phi_2=\phi$ such
that their conductors are divisible only by ramified primes or inert
primes not congruent to $\pm 1 \mod{p}$ and such that $$\#
(\bfT_{\Sigma}/J_{\Sigma}) \geq \#(\Oo/(L^{\rm int}(0,\phi))).$$
\end{thm}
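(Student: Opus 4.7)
The plan is to follow the classical Ribet--Wiles strategy of producing a congruence between an Eisenstein series and a cuspidal eigenform, adapted to the $\GL_2/F$ setting of automorphic forms over an imaginary quadratic field. There are three main steps: (i) construct suitable auxiliary Hecke characters $\phi_1,\phi_2$; (ii) build an Eisenstein series $E(\phi_1,\phi_2)$ whose ``constant term'' measures $L^{\rm int}(0,\phi)$; (iii) convert a congruence into a lower bound on $\#(\bfT_{\Sigma}/J_{\Sigma})$.

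First I would construct $\phi_1,\phi_2$ by a class field theory argument. Since $\phi$ has infinity type $z^2$, I need $\phi_1,\phi_2$ of infinity types $z$ and $z^{-1}$ respectively, with $\phi_1\phi_2^{-1}=\phi$. Using that $p\nmid \#\Cl_F$ and the hypothesis that $\ell\not\equiv\pm 1\pmod p$ for $\ell\mid d_F$, I can choose a finite-order twist so that the conductors of $\phi_i$ involve only ramified primes and/or inert primes $\ell\not\equiv\pm 1\pmod p$. The role of this conductor restriction is to guarantee that the local components at those places of the associated automorphic representations are non-special in a way compatible with the compact subgroup $K_f$ of \eqref{ourcompact} (which is why one should set $K_f$ in terms of $\fM_1$).

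Next I would construct the Eisenstein series $E=E(\phi_1,\phi_2)\in M_2(K_f,\gamma)$ with $\gamma=\phi_1\phi_2$, obtained by parabolic induction from $\phi_1\oplus\phi_2$ on the Borel. Its Hecke eigenvalues at $\fq\notin\Sigma$ are $\phi_1(\varpi_\fq)\cdot\#k_\fq+\phi_2(\varpi_\fq)$, so the eigencharacter $\lambda_0$ of $E$ is annihilated (mod $J(\Sigma)$) on the Hecke side. By the standard calculation of the Fourier--Whittaker expansion of $E$ (cf.\ the computation in Sections 1--2 of \cite{B09}, based on Weissauer/Urban), the ``non-cuspidal'' part of $E$, suitably normalized by the canonical complex/$p$-adic period, has $p$-adic valuation equal to $\val_{\varpi}(L^{\rm int}(0,\phi))$. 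This is the key automorphic input, and conceptually it is the heart of the proof.

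Finally I would produce the congruence. Work inside the $\Oo$-module $M_2(K_f,\gamma;\Oo)$ and consider the $\Oo$-linear functional $\beta$ on it given by a suitable Fourier coefficient that vanishes on cusp forms; the previous step shows $\beta(E)$ generates $(L^{\rm int}(0,\phi))\subset\Oo$ up to units. Projecting $E$ onto the Eisenstein part and exhibiting $E=E^{\rm cusp}+E^{\rm Eis}$ integrally at $\fm(\Sigma)$ produces a cuspidal form $F\equiv E\pmod{\varpi^m}$, with $m=\val_{\varpi}(L^{\rm int}(0,\phi))$, after localization at $\fm(\Sigma)$. This Hecke congruence is exactly the statement that the $\Oo$-algebra homomorphism $\bfT_{\Sigma}\to\Oo$ attached to $F$ sends every generator of $J_{\Sigma}$ into $\varpi^m\Oo$. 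Combining all such cuspidal congruences and appealing to a Deligne--Serre type lifting argument yields
\[
\#(\bfT_{\Sigma}/J_{\Sigma})\;\geq\;\#(\Oo/\varpi^m)\;=\;\#(\Oo/(L^{\rm int}(0,\phi))).
\]
The hardest step is item (ii): showing that after choosing the right integral normalization the constant/Fourier term of the Eisenstein series really has $p$-adic valuation \emph{equal to} $\val_\varpi L^{\rm int}(0,\phi)$ and not something smaller (which would give a weaker bound) or larger (which would be wrong). This requires a careful comparison of periods and a precise calculation of local intertwining factors at the bad primes, where the conductor restriction on $\phi_1,\phi_2$ is used crucially to keep those local factors units.
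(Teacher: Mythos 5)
This theorem is not proved in the paper at all; it is quoted verbatim from \cite{B09}, Theorem~14, and the authors supply no argument here. So the relevant comparison is with Berger's proof in that reference, not with anything in the present text.

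Your sketch captures the broad Ribet--Mazur--Wiles strategy, but the technical route you propose is the wrong one for an imaginary quadratic base field, and it differs in essential ways from what \cite{B09} actually does. Over $F$ imaginary quadratic the symmetric space is hyperbolic $3$-space, the ``modular forms'' live in degree-$1$ cohomology of a real $3$-manifold, and there is no $q$-expansion principle in the classical sense; Fourier--Bessel coefficients do not give a ready-made $\Oo$-integral structure against which one can play the constant-term game. For precisely this reason Berger works with \emph{Eisenstein cohomology} in the sense of Harder: he constructs the Eisenstein class in $H^1$ of the Borel--Serre compactification, computes its restriction to the boundary $\partial(\Gamma\backslash \overline{\mathcal{H}_3})$ (the value $L^{\rm int}(0,\phi)$ appears via the intertwining operator/constant term of the Eisenstein series evaluated on boundary cohomology, following Harder's rationality and integrality results), and then analyses the \emph{denominator} of the Eisenstein class with respect to the integral lattice $H^1(\cdot,\Oo)$. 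It is this denominator that controls $\#(\bfT_\Sigma/J_\Sigma)$, via a congruence-module/duality pairing between the Hecke algebra and cohomology, not a ``Fourier coefficient functional $\beta$ that vanishes on cusp forms'' as you suggest (such a functional is simply not available in this setting). The last step of your sketch is also not sound as stated: a ``Deligne--Serre type lifting argument'' plays no role; Berger's index bound comes from showing the denominator is divisible by $L^{\rm int}(0,\phi)$ (using the conductor restriction on $\phi_1,\phi_2$ to clear local factors and to ensure $\#k_\fq\not\equiv 1\pmod p$ at the bad primes) and then transporting that to the congruence ideal. In short, the spirit of your proposal is right, and step~(i) is on target, but the central mechanism must be cohomological (boundary restriction and denominators of Eisenstein classes), and your item~(iii) needs to be replaced by the duality/congruence-module argument rather than a lifting argument.
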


The space $S_2(K_{f}, \psi)$ is isomorphic as a
$G(\AQ_f)$-module to $\bigoplus \pi_{f}^{K_{f}}$ for automorphic
representations $\pi$ of a certain infinity type (see Theorem \ref{attach1}
below)
with central character $\psi$. Here $\pi_{f}$ denotes the restriction
of $\pi$ to $\GL_2(\AQ_f)$ and $\pi_{f}^{K_{f}}$ stands for the
$K_{f}$-invariants.

For $g \in G(\AQ_f)$ we have the usual Hecke
action of $[K_{f} g K_{f}]$ on $S_2(K_{f})$ and $S_2(K_{f},
\psi)$. For primes $\fq$ such that the $v$th component of $K_f$ is
$\GL_2(\OFv)$
we define $T_{\fq}
= [K_{f} \bmat \varpi_{\fq} \\ & 1 \emat K_{f}] $.

Combining the work of Taylor, Harris, and Soudry with results of
Friedberg-Hoffstein and Laumon/Weissauer, one can show the following
(see \cite{BergerHarcos07} for general case of cuspforms of weight
$k$ and forthcoming work for general CM fields by C. P. Mok (with a similar assumption on the central character) and Harris-Taylor-Thorne-Lan  (without such an assumption)):
\begin{thm}[\cite{BergerHarcos07} Theorem 1.1] \label{attach1} Given
a
cuspidal automorphic representation
$\pi$ of
$\GL_2(\AF)$ with $\pi_{\iy}$ isomorphic to the principal series
representation corresponding to $$\bmat t_1 & * \\ & t_2 \emat \mapsto
\left(\frac{t_1}{|t_1|} \right) \left(\frac{|t_2|}{t_2}\right)$$ and
cyclotomic central character
$\psi$ (i.e., $\psi^c = \psi$), let $\Sigma_{\pi}$ denote the set
consisting of the places of $F$ lying above $p$, the primes where $\pi$ or
$\pi^c$ is ramified, and the primes ramified in $F/\bfQ$.

Then there exists a finite extension $E$ of $F_{\fp}$ and a Galois
representation $$\rho_{\pi}: G_{\Sigma_{\pi}} \rightarrow \GL_2(E)$$
such that if $\fq \not\in \Sigma_{\pi}$, then $\rho_{\pi}$ is
unramified at $\fq$ and the characteristic polynomial of
$\rho_{\pi}(\Frob_{\fq})$ is $x^2-a_{\fq}(\pi)x + \psi(\varpi_{\fq})
(\# k_{\fq}),$ where $a_{\fq}(\pi)$ is the Hecke eigenvalue
corresponding to $T_{\fq}$. Moreover, $\rho_{\pi}$ is absolutely
irreducible.
\end{thm}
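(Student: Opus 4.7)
The plan is to realize $\pi$ inside an automorphic representation on a group over $\bfQ$ whose Galois representations are already known to exist, and then to descend. The assumption that the central character is cyclotomic ($\psi^c=\psi$) makes $\pi$ essentially conjugate self-dual, and this is what enables such a transfer. Concretely, first I would use the theta lifting developed by Harris, Soudry, and Taylor to produce a cuspidal automorphic representation $\Pi$ of $\GSp_4(\adeleQ)$ whose Satake parameters at a split prime $\fq\bar\fq$ of $F$ recover those of $\pi_\fq$ and $\pi_{\bar\fq}^c$. The construction proceeds via an intermediate theta lift whose non-vanishing is a central $L$-value condition; here one invokes the Friedberg--Hoffstein non-vanishing theorem (possibly after twisting by an auxiliary quadratic character) to guarantee that the resulting $\Pi$ is genuinely cuspidal rather than Eisenstein or identically zero. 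The archimedean component $\pi_\infty$ of the prescribed principal-series type is chosen precisely so that $\Pi_\infty$ lies in the discrete series of $\GSp_4(\bfR)$ of the cohomological weight handled by the next step.

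Next I would input $\Pi$ into the construction of Laumon and Weissauer, which attaches to a cohomological cuspidal automorphic representation of $\GSp_4(\adeleQ)$ a continuous $\fp$-adic representation $\sigma:G_\bfQ\to \GL_4(E)$ whose Frobenius characteristic polynomials at primes of good reduction agree with those predicted by the local Langlands correspondence for $\GSp_4$.

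Then I would descend from $G_\bfQ$ to $G_F$. The dual-group picture of the Harris--Soudry--Taylor lift shows $\sigma|_{G_F}$ should decompose as $\rho_\pi\oplus \rho_{\pi^c}$, and one can separate the two summands by using Hecke operators at primes of $F$ split over $\bfQ$: at such a prime $\fq$, the trace of $\sigma(\Frob_\fq)+\sigma(\Frob_{\bar\fq})$ equals $a_\fq(\pi)+a_{\bar\fq}(\pi^c)$, and comparing with $\sigma(\Frob_\fq\Frob_{\bar\fq})$ one extracts the four individual Satake parameters. A Chebotarev argument then promotes the Frobenius-by-Frobenius identification to an isomorphism of representations with the predicted characteristic polynomial $x^2-a_\fq(\pi)x+\psi(\varpi_\fq)(\#k_\fq)$. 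Finally, absolute irreducibility of $\rho_\pi$ follows because any non-trivial subrepresentation would force $\pi$ to be non-cuspidal by strong multiplicity one applied to the induced Galois-theoretic data.

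The main obstacle is the first step: producing the Siegel lift $\Pi$ with the correct archimedean component and verifying its cuspidality. The weight-$2$ (i.e., parallel-weight-$2$) case over an imaginary quadratic field is delicate because the corresponding $\Pi_\infty$ sits at the edge of the cohomological range where Laumon--Weissauer applies, and the necessary central $L$-value to which Friedberg--Hoffstein is applied must be checked to be critical. This is precisely the technical content absorbed by \cite{BergerHarcos07} in the weight-$k$ formulation, and in the formulation stated here one essentially cites their theorem; more uniform constructions over general CM fields are supplied by the forthcoming work of Mok and of Harris--Lan--Taylor--Thorne mentioned in the text.
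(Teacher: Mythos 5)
The paper does not prove Theorem \ref{attach1} at all; it simply cites it as Theorem 1.1 of \cite{BergerHarcos07}, attributing the result to a combination of the work of Taylor, Harris and Soudry (theta lift to $\GSp_4$), Friedberg--Hoffstein (non-vanishing of the relevant central $L$-value), and Laumon/Weissauer (Galois representations for cohomological cuspforms on $\GSp_4$). Your sketch of the theta-lift construction, the cuspidality argument via non-vanishing, the Laumon--Weissauer input, and the split-prime descent from $G_{\bfQ}$ to $G_F$ is a faithful account of exactly that chain of references, so it is consistent with the approach the paper points to; since the paper gives no argument of its own, there is nothing further to compare.
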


Regarding the crystallinity of the representations $\rho_{\pi}$ we make the following conjecture (see Section \ref{s2.1} for the definition of a short crystalline Galois representation, 
and note that we assume $p>3$):
\begin{conj} \label{chrys}
If $\pi$ is unramified at $\fq \mid p$ then
$\rho_{\pi}|_{G_{\mathfrak{q}}}$ is crystalline and short.
\end{conj}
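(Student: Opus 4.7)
The plan is to deduce the conjecture by realizing $\rho_{\pi}$ inside the $p$-adic \'etale cohomology of a Shimura variety with good reduction at $\fq$. For the infinity type appearing in Theorem \ref{attach1} (parallel weight $2$) and cyclotomic central character, one transfers $\pi$ to an automorphic representation $\Pi$ of a quasi-split unitary similitude group attached to an auxiliary CM extension, along the lines of the constructions of C.~P.~Mok and Harris-Taylor-Thorne-Lan cited in the paragraph above the conjecture. The representation $\rho_{\pi}$ (or a suitable tensor construction from it) is then extracted from the cohomology of the corresponding unitary Shimura variety.

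First I would arrange the level subgroup to be hyperspecial at each place of the auxiliary CM field lying above $\fq$. This is permitted because $\pi$, and hence its transfer $\Pi$, is unramified at $\fq$. Under this choice the relevant Shimura variety admits a smooth proper integral model with good reduction at $\fq$, so that by the Fontaine-Messing-Faltings comparison theorem its $p$-adic \'etale cohomology defines a crystalline representation of $G_{\fq}$.

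Next I would verify the bound on the Hodge-Tate weights. The archimedean parameter specified in Theorem \ref{attach1} forces the Hodge-Tate weights of $\rho_{\pi}|_{G_{\fq}}$ to be $\{0,1\}$. Since $p>3$ by standing hypothesis, these weights lie in the interval $[0, p-2]$, which is exactly the Fontaine-Laffaille range; hence the conditions ${\rm Fil}^0 D = D$ and ${\rm Fil}^{p-1} D = 0$ from Section \ref{s2.1} hold, so $\rho_{\pi}|_{G_{\fq}}$ is short crystalline in the sense used throughout the paper.

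The main obstacle is the local-global compatibility at $p$. One must match the Galois representation cut out of the Shimura-variety cohomology by a Hecke eigensystem lifting $\Pi$ with $\rho_{\pi}$ itself, rather than merely with its Frobenius traces at $\fq \notin \Sigma_{\pi}$, since Theorem \ref{attach1} pins $\rho_{\pi}$ down only up to semisimplification away from the level. Over imaginary quadratic fields, the constructions of $\rho_{\pi}$ proceed in part via congruences and $p$-adic interpolation from characteristic zero automorphic Galois representations, and propagating the crystalline property across such a limiting procedure at primes above $p$ is delicate: one needs that the characteristic-zero members of the family are crystalline with uniformly bounded Hodge-Tate weights, together with an argument (in the spirit of Kisin's work on closed crystalline loci in Galois deformation spaces) ensuring that the crystalline condition passes to the limit. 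Carrying this step out unconditionally in the generality of Theorem \ref{attach1} is the crux of Conjecture \ref{chrys}.
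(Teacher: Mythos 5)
This statement is labeled as a \emph{conjecture} in the paper (\texttt{\textbackslash begin\{conj\}}), not a proposition or theorem, and the paper offers no proof of it. Immediately after stating Conjecture~\ref{chrys} the authors remark only that ``This has now been proven in many cases by A.~Jorza,'' citing Jorza's thesis, and then use the crystallinity of the $\rho_{\pi}$ occurring in $S_2(K_f,\gamma)_{\fm(\Sigma)}$ as a hypothesis in Theorem~\ref{mainthm2}. There is therefore no internal proof to compare your argument against.

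As a sketch of how one would approach the conjecture, your outline is sensible and in line with the literature the paper alludes to (Mok, Harris--Lan--Taylor--Thorne, Jorza): transfer $\pi$ to a unitary similitude group, realize the relevant Galois representation in the cohomology of a unitary Shimura variety with hyperspecial level and good reduction above $p$, invoke Fontaine--Messing/Faltings for crystallinity, and check that the parallel weight $2$ infinity type places the Hodge--Tate weights in $\{0,1\}\subset[0,p-2]$ so the representation is short in the Fontaine--Laffaille sense used in Section~\ref{s2.1}. You also correctly identify the genuine obstruction: the known constructions of $\rho_{\pi}$ over imaginary quadratic fields produce a representation characterized only by Frobenius traces away from $\Sigma_{\pi}$, partly via $p$-adic congruences, so local--global compatibility at $\fq\mid p$ (and the passage of crystallinity through a limiting procedure) is exactly what remains to be established. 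But that is precisely why the paper records this as a conjecture rather than a theorem; your final paragraph concedes as much, so what you have written is an informed survey of the difficulty, not a proof. To the extent the paper ``handles'' the statement at all, it does so by citing Jorza's partial results and otherwise assuming the conclusion as a hypothesis, which is the honest state of the art here.
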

This has now been proven in many cases by A. Jorza \cite{Jorza10}. Note that for the choice of characters $\phi_i$ as in Theorem \ref{Eiscong} the cuspforms occurring in $S_2(K_{f}, \psi)$ are unramified at $\fq \mid p$.

\begin{definition} \label{Pitau im} Let $\chi$ be the mod $\varpi$ reduction of $\phi_{\fp} \epsilon$. We now define the subsets $\Pi$ as the set of (strict equivalence classes of Galois) deformations of residual representations of the form (\ref{form1}), one for each $\rho_{\pi}$ associated to an automorphic representation $\pi$ occurring in $S_2(K_f, \gamma)_{\mathfrak{m}(\Sigma)}$
and define $\Pi_{\tau}$ to be the subset with residual representation isomorphic to (a twist by $\phi_{2,\fp}$ of) $\tau=\bmat 1 & * \\ 0 & \chi\emat$. (Note that $\Pi_{\tau} \cap \Pi_{\tau'}=\emptyset$ for $\tau \not \cong \tau'$ by Proposition \ref{SW2.13}).
\end{definition}

For every $\tau$ one has the natural surjective map $\bfT \twoheadrightarrow \bfT_{\tau}$ resulting from the surjections $R^{\rm ps} \twoheadrightarrow R^{\rm tr}_{\tau}$.

\begin{lemma} \label{actual Hecke} If $\Pi$ is the set of modular deformations defined above then the ring $\bfT \subset \prod_{\rho_{\pi} \in \Pi} \Oo$ defined in the previous section can be identified with the  Hecke algebra $\bfT_{\Sigma}$.
Furthermore, $\bfT_{\tau}$ agrees with the quotient of $\bfT_{\Sigma}$ acting on the subspace of automorphic forms spanned by eigenforms $\pi$ such that $\rho_{\pi} \in \Pi_{\tau}$. \end{lemma}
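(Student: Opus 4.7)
The plan is to identify both rings by comparing generators: $\bfT_\Sigma$ is generated as an $\Oo$-algebra by the Hecke operators $T_\fq$ for $\fq \notin \Sigma$ acting on $S_2(K_f,\gamma)_{\mathfrak{m}(\Sigma)}$, while $\bfT$ is by construction the image in $\prod_{\rho_\pi \in \Pi}\Oo$ of the closed $\Oo$-subalgebra of $R^{\rm ps}$ topologically generated by $T^{\rm ps}(\Frob_\fq)$ (cf.\ Lemma \ref{2}). Theorem \ref{attach1} provides the bridge: for each cuspidal $\pi$ occurring in $S_2(K_f,\gamma)_{\mathfrak{m}(\Sigma)}$, the attached Galois representation $\rho_\pi$ satisfies $\tr\rho_\pi(\Frob_\fq)=a_\fq(\pi)$, the $T_\fq$-eigenvalue on $\pi$. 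Conjecture \ref{chrys} (or rather the known cases pertinent to the setup of Theorem \ref{Eiscong}) ensures crystallinity at primes dividing $p$, so that the $\rho_\pi$ indeed deform one of the residual representations $\tau$ of the form (\ref{form1}) and hence belong to $\Phi$.

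For the first assertion, I would argue as follows. The decomposition of $S_2(K_f,\gamma)_{\mathfrak{m}(\Sigma)}$ into Hecke eigenspaces, together with semisimplicity of $\bfT_\Sigma \otimes_\Oo E$, embeds $\bfT_\Sigma$ into $\prod_{\pi} \Oo$ via $T_\fq \mapsto (a_\fq(\pi))_\pi$, where $\pi$ runs over the cuspidal eigenforms lying in the localization. By Theorem \ref{attach1} the set of such $\pi$ is in bijection with $\Pi$ (each strict equivalence class of Galois deformations determined by its trace via Chebotarev and Brauer–Nesbitt, using absolute irreducibility of $\rho_\pi$). Under this bijection the embedding $T_\fq \mapsto (a_\fq(\pi))_\pi$ coincides with the map $\bfT \hookrightarrow \prod_{\rho_\pi \in \Pi}\Oo$ restricted to the topological generators $(\phi \circ T^{\rm ps})(\Frob_\fq) = (\tr\rho_\pi(\Frob_\fq))_\pi$. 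Since these generators match and both rings are taken to be the closed $\Oo$-subalgebra they generate, we obtain $\bfT = \bfT_\Sigma$. The only delicate point is that the localization $\bfT_\Sigma = \bfT(\Sigma)_{\mathfrak{m}(\Sigma)}$ singles out exactly those $\pi$ whose mod $\varpi$ Hecke eigensystem is Eisenstein of the type $T_\fq \equiv \phi_1(\varpi_\fq)\#k_\fq + \phi_2(\varpi_\fq)\pmod{\varpi}$, which via Theorem \ref{attach1} corresponds precisely to those $\rho_\pi$ whose semisimplified reduction is $1 \oplus \chi$ (after twisting by $\phi_{2,\fp}$), i.e.\ those whose residual class lies in $\fT$ — this is the defining property of $\Pi$ in Definition \ref{Pitau im}.

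For the second assertion, let $V_\tau \subset S_2(K_f,\gamma)_{\mathfrak{m}(\Sigma)}$ denote the $\bfT_\Sigma$-stable subspace spanned by those eigenforms $\pi$ with $\rho_\pi \in \Pi_\tau$. By Proposition \ref{SW2.13} the $\Pi_\tau$ partition $\Pi$, so the projection $\prod_{\rho_\pi \in \Pi}\Oo \twoheadrightarrow \prod_{\rho_\pi \in \Pi_\tau}\Oo$ restricts to a surjection from $\bfT = \bfT_\Sigma$ onto its image, and this image is visibly the quotient of $\bfT_\Sigma$ acting faithfully on $V_\tau$. On the other hand, by construction $\bfT_\tau$ is the image of $R^{\rm tr}_\tau \to \prod_{\rho \in \Pi_\tau}\Oo$, and since $R^{\rm tr}_\tau$ is topologically generated by $\tr\rho_\tau(\Frob_\fq)$, the map $\bfT \twoheadrightarrow \bfT_\tau$ factoring through $R^{\rm ps} \twoheadrightarrow R^{\rm tr}_\tau$ agrees with the projection described above on generators, and hence as $\Oo$-algebras.

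The main obstacle, and the reason the statement is not entirely tautological, is ensuring that the bijection between $\Pi$ and the set of eigenforms in $S_2(K_f,\gamma)_{\mathfrak{m}(\Sigma)}$ is exact in both directions: one must know that every deformation in $\Pi$ really comes from such an eigenform (this is the definition), and that every eigenform $\pi$ in the localization produces a $\rho_\pi$ which is crystalline at $\fp\mid p$ and whose residual semisimplification is $1 \oplus \chi$ (up to the twist), so that $\rho_\pi$ lies in some $\Phi_\tau$. The first is built into Definition \ref{Pitau im}, while the second uses Theorem \ref{attach1}, Conjecture \ref{chrys}, and the explicit form of the Eisenstein maximal ideal $\mathfrak{m}(\Sigma)$.
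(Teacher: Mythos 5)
Your approach is essentially the one the paper takes: identify both rings inside $\prod_{\rho_\pi \in \Pi}\Oo$ via the relation $a_\fq(\pi)=\tr\rho_\pi(\Frob_\fq)$ from Theorem \ref{attach1}, observe that the generators match, and conclude. The injectivity argument and the treatment of the second assertion match the paper's as well (the paper only writes out the first assertion and says the second is analogous).

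There is a small but real gap in your surjectivity step. You write that ``both rings are taken to be the closed $\Oo$-subalgebra they generate,'' but this is not the definition of $\bfT_\Sigma$: it is defined as the localization $\bfT(\Sigma)_{\fm(\Sigma)}$ of the Hecke algebra, not as a closure inside $\prod\Oo$. By contrast $\bfT$ \emph{is} by definition the image of the complete ring $R^{\rm ps}$, hence the \emph{closed} $\Oo$-subalgebra generated by the Frobenius traces. To conclude that the image $f(\bfT_\Sigma)$ coincides with $\bfT$, one must show that $f(\bfT_\Sigma)$ is itself closed. The paper supplies this: it observes that $\bfT_\Sigma$ is local and complete in the $\fm_\Sigma$-adic topology, that this topology agrees with the subspace topology inherited from $\bfT$, and then invokes Matsumura's Theorem 8.1 to identify the closure of $\bfT_\Sigma$ in $\bfT$ with its completion, which is $\bfT_\Sigma$ itself. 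Your proof works once this completeness/closedness observation is inserted, but as written it treats as a definition something that in fact needs a (short) argument.
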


\begin{proof} We will just prove the first part (concerning $\bfT$ and $\bfT_{\Sigma}$ - the proof for $\bfT_{\tau}$ being analogous).
We define the following $\Oo$-algebra map:
$$f:\bfT_{\Sigma} \to \bfT \subset \prod_{\rho_{\pi} \in \Pi} \Oo: \; T_{\fq} \mapsto (a_{\fq}(\pi))_{\rho_{\pi} \in \Pi},$$ where we use that $a_{\fq}(\pi)=\tr \rho_{\pi}({\rm Frob}_{\fq})$ and therefore  $(a_{\fq}(\pi))_{\rho_{\pi} \in \Pi}= T(\Frob_{\fq})$. 

We check that this map $f$ is injective: By definition, $\mathbf{T}_{\Sigma}
\hookrightarrow \oplus_{\rho_{\pi} \in \Pi} {\rm End}_{\Oo} (V_{\pi}^{K_f})$,
where we denote by $V_{\pi}$ the representation space of $\pi$. 
Since $T_v$ acts on $\pi$ by $a_v(\pi)$, the image in each
summand is given by the $\Oo$-algebra generated by the
$a_v(\pi)$'s.
 Hence injectivity follows.

For surjectivity first note that $f(\bfT_{\Sigma}) \supset S:=\{ T(\Frob_{\fq}) \mid \fq \not\in \Sigma\}$. Since $f$ is injective let us identify $\bfT_{\Sigma}$ with $f(\bfT_{\Sigma})$. Clearly  $\bfT_{\Sigma}$ is local, complete with respect to its maximal ideal $\fm_{\Sigma}:=\fm(\Sigma)\bfT_{\Sigma}$ and $\bfT$   has the same properties derived from the properties of $R^{\rm ps}$. Moreover, looking at the residue fields  we see that $\fm_{\bfT} \cap \bfT_{\Sigma} = \fm_{\Sigma}$, so the $\fm_{\Sigma}$-adic topology on $\bfT_{\Sigma}$ is the subspace topology induced from the $\fm_{\bfT}$-adic topology on $\bfT$. Then using Theorem 8.1 in \cite{Matsumura} again we see that the closure of $\bfT_{\Sigma}$ in $\bfT$ equals the completion of $\bfT_{\Sigma}$. But since $\bfT_{\Sigma}$ is already complete and $\bfT$ is topologically generated by $S$ (i.e., the closure of $S$ in $\bfT$ equals $\bfT$), we conclude that $\bfT_{\Sigma} = \bfT$, hence we are done.   
\end{proof}

\section{Main result} \label{Main result}

In this section we will state our main theorems (Theorems \ref{mainthm}, \ref{mainthm2} and \ref{version2}) for the two-dimensional Galois representations over imaginary quadratic fields considered in section \ref{section4}. In this case many of the assumptions introduced throughout the paper can be proven to hold. However, we would like to stress that the conclusions are still valid if instead one assumes Assumptions \ref{ass1}, \ref{BK} and \ref{bound on T/J}.  To make this section self-contained we will repeat all the assumptions in the case of an imaginary quadratic field which were made in section \ref{section4}.

Let $F$ be an imaginary quadratic field,  $p>3$, $p\nmid \#{\rm Cl}_F d_F$, and assume $\ell \not \equiv \pm 1 \mod{p}$ for $\ell \mid d_F$. 
Let $\phi$ be an unramified Hecke character of infinity type $\phi^{(\infty)}(z)=z^2$ and write $\chi$ for the mod $\varpi$-reduction of $\phi_{\fp}\epsilon$. Furthermore, assume that $\Sigma$ contains the set of places $S_{\phi}$ (containing the primes dividing the conductors of the two auxiliary characters $\phi_i$ from Theorem \ref{Eiscong}).

Let $\tau: G_{\Sigma} \to \GL_2(\bfF)$ be a 
 non-semi-simple representation of the form
 $$\tau = \bmat 1 & * \\ 0 & \chi \emat.$$
\begin{thm}\label{mainthm} Suppose Conjecture \ref{BK1} is satisfied (see Remark \ref{rBK1}). There exists an $\bfF$-basis $\mB$ of $H^1_{\Sigma}(F, \chi^{-1})$ which is modular, i.e., such that if $b \in \mB$ and $f: G_{\Sigma} \to \bfF(\chi^{-1})$ is a cocycle representing $b$, then the residual representation $$\rho_f: G_{\Sigma}  \to \GL_2(\bfF), \quad \rho_f(g) = \bmat 1 & f(g) \chi(g) \\ 0 & \chi(g)\emat$$ is (up to a twist) the reduction (mod $\varpi$) of a representation $\rho_{\pi}: G_{\Sigma} \to \GL_2(\Oo)$ attached to an automorphic representation $\pi$ of $\GL_2(\AF)$.\end{thm}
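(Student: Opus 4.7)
The plan is to reduce Theorem \ref{mainthm} to Corollary \ref{surj1prime}, which produces a modular basis of $H^1_{\Sigma}(F,\Hom(\tau_2,\tau_1)) = H^1_{\Sigma}(F,\chi^{-1})$ as soon as Assumptions \ref{BK} and \ref{bound on T/J} are known for the chosen set of modular deformations $\Pi$. So the work is to verify those two assumptions in the imaginary quadratic setting and then to interpret the conclusion in terms of automorphic representations of $\GL_2(\adeleF)$.

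First I would check Assumption \ref{BK}. By Lemma 5.6 of \cite{BergerKlosin13}, for our choice of $\Sigma \supset S_\phi$ (so in particular $\fq \in \Sigma \setminus \Sigma_p$ satisfies $\#k_{\fq}\not\equiv 1 \pmod p$, by the hypothesis $\ell\not\equiv\pm 1\pmod p$ for $\ell\mid d_F$ together with the choice of the auxiliary characters), the crystalline Selmer group $H^1_{\Sigma}(F,W)$ coincides with $H^1_f(F,W)$, where $W=\Hom_{\Oo}(\Psi,1)\otimes E/\Oo$. Conjecture \ref{BK1} then yields
\[
\# H^1_{\Sigma}(F,\Hom_{\Oo}(\tilde\tau_2,\tilde\tau_1)\otimes_{\Oo} E/\Oo) \;\leq\; \#\Oo/L,
\]
with $L=L^{\rm int}(0,\phi)$, so Assumption \ref{BK} holds. (Assumption \ref{ass1} is provided by Corollary 9.7 of \cite{BergerKlosin13}, as noted in Example \ref{Eimag}.)

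Next I would verify Assumption \ref{bound on T/J}. Let $\Pi$ be the set of modular deformations from Definition \ref{Pitau im}, attached to cuspidal automorphic representations $\pi$ contributing to $S_2(K_f,\gamma)_{\fm(\Sigma)}$; their Galois representations $\rho_\pi$ exist by Theorem \ref{attach1} and are crystalline by Conjecture \ref{chrys}, which is available under our hypotheses (Jorza). By Lemma \ref{actual Hecke} the ring $\bfT$ attached to $\Pi$ in Section \ref{The rings Ttau} is canonically identified with the Hecke algebra $\bfT_{\Sigma}$, and under this identification the reducibility ideal $J$ matches the Eisenstein ideal $J_{\Sigma}$ (by Lemma \ref{l2.6}/Lemma \ref{Tcyclic}: $J$ is the closed ideal generated by $T({\rm Frob}_{\fq})-\tr\tilde\tau_1({\rm Frob}_{\fq})-\tr\tilde\tau_2({\rm Frob}_{\fq})$, which is precisely the Eisenstein condition). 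Theorem \ref{Eiscong} then gives
\[
\#\bfT/J \;=\; \#\bfT_{\Sigma}/J_{\Sigma} \;\geq\; \#\Oo/L^{\rm int}(0,\phi) \;=\; \#\Oo/L,
\]
which is Assumption \ref{bound on T/J}.

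With both assumptions in hand, Corollary \ref{surj1prime} produces an $\bfF$-basis $\mB$ of $H^1_{\Sigma}(F,\chi^{-1})$ all of whose elements $e$ have the property that $\Pi_{\tau^e}\neq\emptyset$, i.e.\ there exists some $\rho\in\Pi$ whose associated residual representation is isomorphic to the extension $\tau^e$ cut out by $e$. By Definition \ref{Pitau im}, each such $\rho$ is (up to the fixed twist by $\phi_{2,\fp}$) the Galois representation $\rho_\pi$ attached by Theorem \ref{attach1} to an automorphic representation $\pi$ of $\GL_2(\adeleF)$ as in Section \ref{section4}. Unwinding this gives exactly the statement of the theorem. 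The main technical point in the argument is really the identification of $\bfT$ with $\bfT_{\Sigma}$ in Lemma \ref{actual Hecke} together with the verification that the ideal of reducibility $J$ coincides with the Eisenstein ideal $J_{\Sigma}$; once this bridge between the abstract deformation-theoretic picture and the Hecke-algebra input of Theorem \ref{Eiscong} is in place, the theorem is a direct consequence of Corollary \ref{surj1prime}.
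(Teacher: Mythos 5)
Your proof is correct and follows essentially the same route as the paper's own (one‑paragraph) argument: reduce to Corollary \ref{surj1prime}, deduce Assumption \ref{BK} from Conjecture \ref{BK1} (together with $H^1_{\Sigma}=H^1_f$ via Lemma 5.6 of \cite{BergerKlosin13}), and deduce Assumption \ref{bound on T/J} from Theorem \ref{Eiscong} after identifying $\bfT$ with $\bfT_\Sigma$ and $J$ with $J_\Sigma$ via Lemma \ref{actual Hecke} and Lemma \ref{Tcyclic}. The only difference is that you spell out the intermediate verifications in more detail than the paper's terse proof.
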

\begin{proof} This follows from Corollary \ref{surj1prime}. Note that Assumption \ref{bound on T/J} is satisfied by Theorem \ref{Eiscong} 
and Lemma \ref{actual Hecke} (which implies that $J \subseteq J_{\Sigma}$ since $T \mod{J_{\Sigma}}$ is the sum of two characters; in fact, $J =J_{\Sigma}$ by the proof of Lemma \ref{Tcyclic}) 
and Conjecture \ref{BK1} replaces Assumption \ref{BK} in Corollary \ref{surj1prime}. \end{proof}

From now on assume that Conjecture  \ref{BK1} is satisfied. Then by Theorem \ref{mainthm} the Selmer group $H^1_{\Sigma}(F, \chi^{-1})$ has a modular basis $\mB$. From now on assume also that $\mB$ is indeed projectively unique. Then, as discussed in section \ref{Tsection}, the set of modular residual representations (i.e., those residual representations $\tau'=\bmat 1&* \\ 0 & \chi\emat$ for which $\Pi_{\tau'}\neq\emptyset$) is in one-to-one correspondence with the elements of $\mB$ given by sending a modular extension to the corresponding residual representation. In particular, the extension corresponding to $\tau$ must be (up to scaling) among the elements of $\mB$. Hence, by rescaling one of the elements of $\mB$ and reordering we may assume that $\mB = \{e_1=e_{\tau}, e_2, \dots, e_s\}$.

We assume further now that if $\fq \in \Sigma$, then $\# k_{\fq} \not\equiv 1$ (mod $p$). 
Using the principality of the ideal of reducibility (see Proposition \ref{prin}) we can prove a modularity result for residually reducible representations:

\begin{thm} \label{mainthm2}
Assume that the Galois representations $\rho_{\pi}$ for $\pi$ occurring in $S_2(K_f,\Psi)_{\fm(\Sigma)}$ (for notation please see Section \ref{s43}) are crystalline at $v\mid p$. Also suppose that $\# H^1_f(F, E/\Oo((\phi_{\fp}\epsilon))^{-1}) \leq \#
\Oo/\varpi^m$, where $m=\val_{\varpi}(L^{\tuint}(0, \phi))$.

Let $\rho: G_{\Sigma} \to \GL_2(E)$ be a continuous, irreducible representation which is crystalline at $\fp \mid p$ and write $\tau: G_{\Sigma} \to \GL_2(\bfF)$ for its mod $\varpi$ reduction with respect to some lattice in $E^2$. 
Suppose that $\tau^{\rm ss} \cong 1 \oplus \chi$. Assume that 
the sets $\Phi_{\tau'}$ for $\tau' \in \mB$ are finite. Then  $\rho$ is modular, i.e., there exists an automorphic representation $\pi'$ of $\GL_2(\AF)$ such that $\rho \cong \rho_{\pi'}$.
\end{thm}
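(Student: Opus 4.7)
The plan is to leverage the machinery already assembled for residual representations in the modular basis $\mB$ to deduce modularity of $\rho$, via an $R^{0}_{\tau}\cong\bfT_{\tau}$ isomorphism obtained by combining Theorem \ref{modsurj} with the commutative algebra criterion of \cite{BergerKlosin13}. I would first apply Proposition \ref{genRibet} (due to Urban) to $\rho$ to pick a $G_{\Sigma}$-stable $\Oo$-lattice in $E^{2}$ so that the reduction has the non-split form $\tau=\bmat 1 & * \\ 0 & \chi \emat$, so that $\tau$ defines a nonzero class in $H^{1}_{\Sigma}(F,\chi^{-1})$. I then need to argue that $\tau$ is isomorphic to one of the $\tau^{i}\in\mB$; after relabeling, I may assume $\tau=\tau^{1}$ (see the obstacle paragraph below).

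Next I would verify the hypotheses of Theorem \ref{modsurj} for $\tau$: Assumption \ref{ass1} holds by Corollary 9.7 of \cite{BergerKlosin13} using the choice of $\chi$ from Example \ref{Eimag}; Assumption \ref{BK} is exactly our Selmer-bound hypothesis; Assumption \ref{finiteness} is the finiteness hypothesis on $\Phi_{\tau'}$ for $\tau'\in\mB$; Assumption \ref{bound on T/J} follows from Theorem \ref{Eiscong} and Lemma \ref{actual Hecke}; projective uniqueness of $\mB$ is a standing hypothesis; and principality of $J_{\tau^{i}}$ is Corollary \ref{Jprin}. Theorem \ref{modsurj} then gives $R_{\tau}^{\mathrm{tr},0}/I_{\tau}^{\mathrm{tr},0}\cong\bfT_{\tau}/J_{\tau}$, and Theorem \ref{deformtoRtr} identifies $R_{\tau}^{0}\cong R_{\tau}^{\mathrm{tr},0}$, so this becomes an isomorphism $R_{\tau}^{0}/I_{\tau}^{0}\cong\bfT_{\tau}/J_{\tau}$.

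Since $I_{\tau}^{0}$ is principal by Proposition \ref{prin}, the commutative algebra criterion (Theorem 4.1 of \cite{BergerKlosin13}), applied to the surjection $R_{\tau}^{0}\twoheadrightarrow\bfT_{\tau}$ provided by Lemma \ref{factoring} and compatible with ideals of reducibility by Corollary \ref{cyclicity2}, upgrades the above quotient isomorphism to $R_{\tau}^{0}\cong\bfT_{\tau}$. The representation $\rho$ induces via the universal property of $R_{\tau}$ an $\Oo$-algebra map $R_{\tau}\to\Oo$ whose kernel is a point of $\mP(\tau)$, so this map factors through $R_{\tau}^{0}$; transporting it across the isomorphism $R_{\tau}^{0}\cong\bfT_{\tau}$ and then invoking Lemma \ref{actual Hecke} to identify $\bfT_{\tau}$ with the corresponding quotient of the cuspidal Hecke algebra, we obtain an automorphic representation $\pi'$ of $\GL_{2}(\AF)$ such that $\rho\cong\rho_{\pi'}$.

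The main obstacle will be justifying that the Urban reduction $\tau$ lies (up to isomorphism) in $\mB$. Projective uniqueness of $\mB$ by definition only describes modular extensions, while $\tau$ a priori is just the reduction of some crystalline deformation. The point is that a non-modular $\tau'$ with $\Phi_{\tau'}\neq\emptyset$ would, via the lattice construction of Section \ref{lattice}, produce an additional contribution to the deformation-theoretic side of the chain of inequalities underlying the proof of Theorem \ref{modsurj} (cf.\ Proposition \ref{l22} and Corollary \ref{exhaustive}); but the Bloch--Kato upper bound combined with the Eisenstein-ideal lower bound of Theorem \ref{Eiscong} force these inequalities to be equalities already when taken over the $\tau^{i}\in\mB$, leaving no room for such a $\tau'$. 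This forces $\Phi_{\tau'}=\emptyset$ for $\tau'$ outside $\mB$, so the reduction $\tau$ of our $\rho$ must be (isomorphic to) one of the $\tau^{i}$, and the argument closes.
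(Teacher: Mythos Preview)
Your main argument---checking Assumptions \ref{ass1}, \ref{BK}, \ref{finiteness}, \ref{bound on T/J}, then applying Theorem \ref{modsurj}, Theorem \ref{deformtoRtr}, Proposition \ref{prin}, and Theorem 4.1 of \cite{BergerKlosin13} to obtain $R^0_{\tau}\cong\bfT_{\tau}$ and conclude modularity---follows the paper's proof essentially step for step. The explicit appeal to Proposition \ref{genRibet} to arrange that $\tau$ is non-split upper-triangular is a detail the paper leaves implicit in the section setup.

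Where you diverge is in your treatment of the ``obstacle''. The paper does \emph{not} prove that $\tau$ lies in $\mB$; this is a standing hypothesis, built into the paragraph preceding the theorem and made explicit in the abstract (``if $\tau$ itself is modular then so is its every crystalline characteristic zero deformation''). Under projective uniqueness, $\Pi_{\tau}\neq\emptyset$ is equivalent to $\tau\in\mB$ up to isomorphism, and this is assumed, not derived.

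Your attempt to remove this hypothesis has a genuine gap. The inequality
\[
\sum_i\val_p\bigl(\#R^{\rm tr,0}_i/I^{\rm tr,0}_i\bigr)\;\leq\;\val_p\bigl(\#H^1_{\Sigma}(F,\Hom(\tilde{\tau}_2,\tilde{\tau}_1)\otimes E/\Oo)\bigr)
\]
in the proof of Theorem \ref{modsurj} (via Proposition \ref{l22}) rests on the fact that the classes $e_1,\dots,e_s$ are linearly independent: this is what lets the cyclic submodules $\Oo c_i$ be summed without overcounting. For $\tau'\notin\mB$ the class $e_{\tau'}$ is a nontrivial linear combination of the $e_i$, so the image of the corresponding $\iota_{\tau'}$ already lies inside the submodule generated by the $c_i$; there is no ``additional contribution'' that would overshoot the Selmer bound and yield a contradiction. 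Moreover, to even form $R^0_{\tau'}$ and run Urban's lattice argument you need $\Phi_{\tau'}$ finite, which the theorem only assumes for $\tau'\in\mB$. So the sketched argument does not rule out non-modular $\tau'$ with $\Phi_{\tau'}\neq\emptyset$, and you should retain residual modularity of $\tau$ as a hypothesis rather than try to deduce it.
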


\begin{rem}
As discussed in Remark \ref{rBK1} and the paragraph following Conjecture \ref{chrys} the first two assumptions should be satisfied in the majority of cases by work of Jorza and Rubin.
\end{rem}

\begin{rem} 
We note  that when ${\rm rk}_{\Oo} \bfT={\rm dim}_{\bfF} H^1_{\Sigma}(F, \Hom (\tau_2, \tau_1))$ it is easy to see that $H^1_{\Sigma}(F, \Hom (\tau_2, \tau_1))$ has a projectively unique modular basis. It may be possible to check  numerically in specific examples if ${\rm rk}_{\Oo} \bfT={\rm dim}_{\bfF} H^1_{\Sigma}(F, \Hom (\tau_2, \tau_1))$ holds, but we have unfortunately not been able to carry this out.
\end{rem}

\begin{proof} This is a summary of the arguments carried out so far. As in the proof of Theorem \ref{mainthm} we note that Assumption \ref{bound on T/J} is satisfied. Also Assumption \ref{ass1} is satisfied by Corollary 9.7 in \cite{BergerKlosin13} (see also discussion in Example \ref{Eimag}). 
 Let $R^{\rm tr, 0}_i$, $\bfT_i$ be as before the $\Oo$-subalgebra of $R_{\tau_i}^{0}$ (defined as the image of $R_{\tau_i}^{\rm red}$ inside $\prod_{\fp \in \mathcal{P}_{\tau_i}} \Oo$) generated by traces and the Hecke algebra (respectively) corresponding to $e_i$. We denote the corresponding ideals of reducibility and the Eisenstein ideal by $I^{\rm tr, 0}_i$ and $J_i$ respectively. We get for every $i=1,2,\dots, s$ a commutative diagram with surjective arrows:
\be \label{104} \xymatrix{R_i^{\rm tr, 0} \ar[r]^{\phi_i}\ar[d] & \bfT_i \ar[d] \\ R^{\rm tr, 0}_i/I^{\rm tr, 0}_i \ar[r]& \bfT_i/J_i.}\ee By Theorem \ref{modsurj} 
 and Corollary \ref{Jprin} the bottom map is an isomorphism for every $i=1,2, \dots, s$.

By Theorem \ref{deformtoRtr} we obtain a canonical map $R_{\tau}^{\rm red} \to R_{\tau}^{\rm tr, 0}$ which factors through an isomorphism $R_{\tau}^{0} \cong R_{\tau}^{\rm tr, 0}$. 
We know by Proposition \ref{prin} that $I_{\tau}^{0}$ is principal.
Using 
this and diagram (\ref{104}) for $i=\tau$ (and our conclusion that the bottom map in (\ref{104}) is an isomorphism) 
we can apply Theorem 4.1 from  \cite{BergerKlosin13} to 
conclude that $\phi_{\tau}$ is an isomorphism. Then any $\Oo$-algebra map $R_{\tau} \twoheadrightarrow \Oo$ factors through $R_{\tau}^{0} \cong \bfT_{\tau}$, so any deformation to $\Oo$ is modular.
\end{proof}

Let $\Psi$ be as in example \ref{Eimag}.
The property of $H^1_{\Sigma}(F, \chi^{-1})$ of having a projectively unique modular basis can be replaced by the assumption that $H^1_{\Sigma}(F, \Psi^{-1}\otimes E/\Oo)$ is an $\bfF$-vector space.
\begin{thm} \label{version2} Let the notation and assumptions be the same as in Theorem \ref{mainthm2} except that we do not demand that $H^1_{\Sigma}(F, \chi^{-1})$ has a projectively unique modular basis, but instead assume that $\varpi H^1_{\Sigma}(F, \Psi^{-1}\otimes E/\Oo)=0$. Then, as before, $\rho$ is modular. \end{thm}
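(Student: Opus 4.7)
The plan is to run the proof of Theorem \ref{mainthm2} verbatim, intervening only at the one place where the projective uniqueness hypothesis was invoked: namely, in deriving the final inequality of the chain used in the proof of Theorem \ref{modsurj} via Corollary \ref{exhaustive} (whose proof ultimately rested on the combinatorial Theorem \ref{Kr12}). The hypothesis $\varpi H^1_{\Sigma}(F,\Psi^{-1}\otimes E/\Oo)=0$ should allow us to establish the equality $\#\prod_{i=1}^s \bfT_i/J_i = \#\bfT/J$ by a direct structural argument that bypasses Theorem \ref{Kr12} entirely.

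First I would fix a modular basis $\mB=\{e_1=e_{\tau},e_2,\dots,e_s\}$ of $H^1_{\Sigma}(F,\chi^{-1})$ furnished by Theorem \ref{mainthm}, and note that by Proposition 5.8 of \cite{BergerKlosin13} the $\varpi$-torsion hypothesis identifies $H^1_{\Sigma}(F,\Psi^{-1}\otimes E/\Oo)$ with $H^1_{\Sigma}(F,\chi^{-1})$, which has order $q^s$ where $q=\#\bfF$. The global quotient $\bfT/J$ is cyclic by Lemma \ref{Tcyclic}, say $\bfT/J\cong\Oo/\varpi^n$. Via the isomorphism $\iota_J$ of Lemma \ref{iota43J}, $\mT/J\mT$ is an $\bfF$-vector space of dimension $s$, whose $\Oo$-Fitting ideal is $\varpi^s\Oo$; the inclusion $\Fit_{\Oo}(\mT/J\mT)\subset\Fit_{\Oo}(\bfT/J)=\varpi^n\Oo$ established inside the proof of Lemma \ref{iota43J} forces $s\geq n$, while Assumptions \ref{BK} and \ref{bound on T/J} together give $n\geq s$, so in fact $\bfT/J\cong\Oo/\varpi^s$.

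For each $i$, the argument of Proposition \ref{l22} (which relies on Proposition \ref{modext1} applied to $\Pi_i$, forcing the image of $\iota_i$ into $\langle e_i\rangle$, a step independent of projective uniqueness of $\mB$) still shows that $\mT_i/J_i\mT_i$ is a cyclic $\Oo$-module. Since the Selmer group is now $\varpi$-torsion, $\mT_i/J_i\mT_i$ is cyclic and annihilated by $\varpi$, hence isomorphic either to $\bfF$ or to $0$; combined with the inequality $\val_p(\#\bfT_i/J_i)\leq \val_p(\#\mT_i/J_i\mT_i)$ and the non-triviality $\bfT_i/J_i\neq 0$ (which follows from $\Pi_{\tau^i}\neq\emptyset$), this forces $\bfT_i/J_i\cong\bfF$ for every $i$. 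Therefore $\#\prod_{i=1}^s \bfT_i/J_i=q^s=\#\bfT/J$, yielding the conclusion of Corollary \ref{exhaustive} along a new route.

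With this replacement in hand, the chain
\begin{equation*}
\#\prod_{i=1}^s R_i^{\rm tr,0}/I_i^{\rm tr,0}\ \leq\ \#\prod_{i=1}^s \mT_i/I_i^{\rm tr,0}\mT_i\ \leq\ \# H^1_{\Sigma}(F,\Psi^{-1}\otimes E/\Oo)\ =\ \#\prod_{i=1}^s\bfT_i/J_i
\end{equation*}
(the first two inequalities exactly as in the proof of Theorem \ref{modsurj}, the last as just established), together with the surjections $R_i^{\rm tr,0}/I_i^{\rm tr,0}\twoheadrightarrow\bfT_i/J_i$ of (\ref{modItr}), forces equality throughout; in particular $R_{\tau}^{\rm tr,0}/I_{\tau}^{\rm tr,0}\cong\bfT_{\tau}/J_{\tau}$. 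Combining this with Theorem \ref{deformtoRtr} (which identifies $R_{\tau}^0$ with $R_{\tau}^{\rm tr,0}$), Proposition \ref{prin} (principality of $I_{\tau}^0$), and Corollary \ref{Jprin}, the commutative algebra criterion Theorem 4.1 of \cite{BergerKlosin13} produces the desired isomorphism $R_{\tau}^0\cong\bfT_{\tau}$, so that every crystalline characteristic-zero deformation of $\tau$ is modular. The main obstacle is precisely this replacement of the Theorem \ref{Kr12} lower bound: the crux is that $\varpi$-annihilation of the Selmer group collapses each $\mT_i/J_i\mT_i$, and therefore each $\bfT_i/J_i$, all the way down to $\bfF$, so the global product simply counts $\bfF$-dimensions and no finer combinatorics is required.
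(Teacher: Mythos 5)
Your argument is correct, and the key insight---that $\varpi$-annihilation of the Selmer group collapses the relevant modules to $\bfF$---is the right one; but you embed it in a much more roundabout route than the paper's own proof. You re-establish $\#\prod_i \bfT_i/J_i = \#\bfT/J$ by a replacement for Theorem \ref{Kr12} and then re-run the full chain of inequalities from the proof of Theorem \ref{modsurj}. The paper instead works entirely on the deformation side of a single $\tau$: the injection $\iota$ (the $E/\Oo$ version of Lemma \ref{iota43J} for $R^{\rm tr,0}_{\tau}$) has image in the Selmer group, so if $\varpi$ kills the Selmer group it kills $\mT_{\tau}/I_{\tau}^{\rm tr,0}\mT_{\tau}$; by Theorem \ref{deformtoRtr} we have $\mT_{\tau}\cong R_{\tau}^{\rm tr,0}$, so $\varpi$ kills $R_{\tau}^{\rm tr,0}/I_{\tau}^{\rm tr,0}$; this quotient is cyclic (Remark \ref{r2.10}), hence is $\bfF$; and since it surjects onto the nonzero module $\bfT_{\tau}/J_{\tau}$, the surjection is an isomorphism. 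No product over $i$, no comparison with the Hecke-algebra congruence ideal, and no invocation of the full inequality chain is needed. You do in effect observe the essential mechanism (the $\varpi$-torsion hypothesis forcing cyclic modules down to $\bfF$), but you deploy it for the Hecke-side lattices $\mT_i$ and then double back; the paper's shortcut applies it once, directly where it concludes the argument. A minor additional remark: your intermediate step claiming $\mT_i/J_i\mT_i$ is $\varpi$-torsion implicitly requires the $E/\Oo$ analogue $\iota_{i,J_i}$ of $\iota_i$, which the paper constructs only for the full $\bfT$ in Lemma \ref{iota43J} and for the deformation side in Theorem \ref{modsurj}; it does follow by the same construction but should be stated.
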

\begin{proof} We need to reprove Theorem \ref{modsurj}. Our assumption that $\varpi$ annihilates the Selmer group along with  injectivity of $\iota$ in the proof  of Theorem \ref{modsurj} implies that $\varpi$ annihilates $\mT_{\tau}/I_{\tau}^{\rm tr, 0} \mT_{\tau}$. We showed in the proof of Theorem \ref{deformtoRtr} that $\mT_{\tau} \cong R_{\tau}^{\rm tr,0}$. Hence 
we conclude that $\varpi$ annihilates $R_{\tau}^{\rm tr, 0}/I_{\tau}^{\rm tr, 0}$. As discussed towards the end of section \ref{r0section}, the module $R_{\tau}^{\rm tr, 0}/I_{\tau}^{\rm tr, 0}$ is a cyclic $\Oo$-module, hence we must have $R_{\tau}^{\rm tr, 0}/I_{\tau}^{\rm tr, 0} \cong \bfF$.  Since $\bfT_{\tau}/J_{\tau}$ is a non-zero $\Oo$-module, this implies that the map $$R_{\tau}^{\rm tr, 0}/I_{\tau}^{\rm tr, 0} \twoheadrightarrow \bfT_{\tau}/J_{\tau}$$ must be injective. 
\end{proof}


\bibliographystyle{amsalpha}
\bibliography{standard2}

\end{document}